\newcommand{\F}{\mathbb{F}}
\newcommand{\diag}{{{\operatorname{diag}}}}
\newcommand{\Irr}{{{\operatorname{Irr}}}}
\newcommand{\GL}{\operatorname{GL}}
\newcommand{\SL}{\operatorname{SL}}
\newcommand{\PSL}{\operatorname{PSL}}
\newcommand{\Syl}{\operatorname{Syl}}
\newtheorem{thm}{Theorem}[section]
\newtheorem{lem}[thm]{Lemma}
\newtheorem{cor}[thm]{Corollary}
\newtheorem{que}[thm]{Question}
\newtheorem*{thmA}{Theorem A}
\newtheorem*{conA'}{Conjecture A'}
\newtheorem*{thmB}{Theorem B}
\newtheorem*{thmC}{Theorem C}
\newtheorem*{thmD}{Theorem D}
\newtheorem*{thmE}{Theorem E}
\theoremstyle{definition}
\numberwithin{equation}{section}
\begin{document}

\title[Covering $p$-elements by Sylow $p$-subgroups]{Covering the set of $p$-elements in finite groups \\by Sylow $p$-subgroups}

\author{Attila Mar\'oti}
\address{Alfr\'ed R\'enyi Institute, Re\'altanoda Utca 13-15, H-1053, Budapest, Hungary}
\email{maroti@renyi.hu}

\author{Juan Mart\'{\i}nez}
\address{Departament de Matem\`atiques, Universitat de Val\`encia, 46100
  Burjassot, Val\`encia, Spain}
\email{Juan.Martinez-Madrid@uv.es}

\author{Alexander Moret\'o}
\address{Departament de Matem\`atiques, Universitat de Val\`encia, 46100
  Burjassot, Val\`encia, Spain}
\email{alexander.moreto@uv.es}

\thanks{Parts of this work were done when the second and third authors 
were visiting the Alfr\'ed R\'enyi Institute and when the first author was 
visiting the University of Val\`encia. We thank the Alfr\'ed R\'enyi Institute 
and the CARGRUPS research team at the University of Val\`encia for their 
hospitality. All authors were supported by Ministerio de Ciencia e 
Innovaci\'on (Grants PID2019-103854GB-I00 and PID2022-137612NB-I00 funded by 
MCIN/AEI/10.13039/501100011033 and ``ERDF A way of making Europe") and CIAICO/2021/163. The first author has also 
received funding from the European Research Council (ERC) under the European Union's Horizon 2020 research and 
innovation programme (grant agreement No 741420) and was also supported 
by the National Research, Development and Innovation Office (NKFIH) 
Grant No.~K138596, No.~K132951 and Grant No.~K138828. The second author 
was also supported by CIACIF/2021/228. Last, but not least, we thank the referee for the careful reading of the paper and helpful comments.}

\keywords{covering, Sylow subgroup, $p$-element, simple group, solvable group}

\subjclass[2020]{Primary 20D06, 20D08, 20D10, 20D20, Secondary 05E16, 15A18}

\date{\today}

\begin{abstract}
Let $G_p$ be the set of $p$-elements of a finite group $G$. Do we need all the Sylow $p$-subgroups of $G$ to cover $G_p$? 
Although this question does not have an affirmative answer in general, 
our work indicates that the answer is yes more often than one could perhaps expect. 

\end{abstract}

\maketitle


\section{Introduction}

It is an elementary fact that a group cannot be expressed as the
union of two proper subgroups. Let $G$ be a noncyclic finite
group. Cohn \cite{coh} introduced the invariant $\sigma(G)$ as the minimal size
of a covering for $G$ which consists of proper subgroups of $G$.
Tomkinson \cite{tom} proved that $\sigma(G)$ is always a prime power plus $1$
for any (noncyclic and finite) solvable group $G$. There is a
large literature on $\sigma$. The numbers $\sigma(G)$ were computed (or
bounds were given) for various classes of nonsolvable groups
$G$; for certain symmetric groups \cite{mar}, \cite{Sw}, \cite{FGM}, for certain linear groups
\cite{BFS}, \cite{BEGHM}, for sporadic groups \cite{HM}, for Suzuki groups \cite{Lucido}, or
for certain wreath products \cite{GarMar, ag}. There are many positive integers $m$ for
which there is no group $G$ with $\sigma(G) = m$
(see \cite{tom}, \cite{Garonzi}, \cite{GKS}).

Let $p$ be a prime and let $G$ be a finite group. 
We write $G_p$ to denote the set of $p$-elements of $G$.
Motivated by recent work on the commuting probability of $p$-elements \cite{bgmn} (i.e., a local version of the well-known commuting probability in finite groups), 
we study the 
minimal size of a covering of $G_p$ by proper subgroups in \cite{mmm}. 
As one could expect, this local version lies even deeper than the global problem of studying $\sigma(G)$.

There is another version of the problem of covering $G_p$, for a finite
group $G$ and a prime $p$, that seems very natural and will
be considered in this paper: covering $G_p$ by Sylow $p$-subgroups. Since every
$p$-element belongs to some Sylow $p$-subgroup, we clearly have that
$G_p$ can be covered by the set $\mathrm{Syl}_{p}(G)$ of Sylow
$p$-subgroups of $G$. We say that $G$ has (or possesses) a redundant
Sylow $p$-subgroup if $G_p$ has a cover which is a proper subset of
$\mathrm{Syl}_{p}(G)$.

Calculations in GAP \cite{gap} suggest that perhaps surprisingly, groups with
a redundant Sylow $p$-subgroup are rare. Among the groups in
the SmallGroups library in \cite{gap}, there are only examples of groups with a
redundant Sylow $p$-subgroup when $p = 2$. The smallest of
them have order $108$ and are {\tt SmallGroup}(108,17) and {\tt SmallGroup}(108,40).

As we will see in Lemma \ref{equi}, a group $G$ does not have a redundant Sylow
$p$-subgroup for a prime $p$ if and only if there exists an
element of $G$ that belongs to a unique Sylow $p$-subgroup of $G$. This
is the case when, for example, a Sylow $p$-subgroup $P$ of $G$
is normal in $G$, or when it is cyclic, or when $P \cap Q = 1$ for any
Sylow $p$-subgroup $Q$ of $G$ but different from $P$. 

Another important case are groups of Lie type in characteristic $p$. 
We will see in Corollary \ref{lie} that, as pointed out to us by Thomas Weigel, to whom we thank,  it is easy to deduce from Lemma \ref{equi} that 
groups of Lie type in characteristic $p$ do not have a redundant Sylow $p$-subgroup.

The
elementary characterization Lemma \ref{equi} is fundamental for our work. From
this point of view, this had been studied in \cite{her, hv1, hv2,  sch} and we
think that it deserves further study. Using this characterization, it
follows from \cite{hv1} that symmetric groups do not have redundant Sylow
$p$-subgroups for any prime $p$. Similarly, in a Math StackExchange discussion, J. Schmidt
mentioned in \cite{sch} that he had constructed solvable groups with elementary
abelian Sylow $p$-subgroups where every $p$-element belongs to more than
one Sylow $p$-subgroup, but unfortunately, this does not seem
to have appeared in print.

Our first  result shows that for any prime $p$ there is a wealth of
$p$-groups that can occur as the Sylow $p$-subgroup of a solvable group with a
redundant Sylow $p$-subgroup.

\begin{thmA}
	Let $p$ be a prime. If $P$ is a non-cyclic finite $p$-group
	of exponent $p$, then there exists a solvable group $G$ with Sylow
	$p$-subgroups isomorphic to $P$ such that $G$ has a redundant Sylow
	$p$-subgroup. 
\end{thmA}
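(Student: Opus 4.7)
The plan is to invoke Lemma~\ref{equi} and, for the given non-cyclic $p$-group $P$ of exponent $p$, construct a solvable group of the form $G = V \rtimes P$, where $V$ is a $P$-module over $\mathbb{F}_q$ for some prime $q \ne p$. Since $V$ is then a normal $p'$-subgroup of $G$, the copy of $P$ inside $G$ is a Sylow $p$-subgroup, and by Lemma~\ref{equi} it is enough to exhibit, for every non-identity $x \in P$, at least two Sylow $p$-subgroups of $G$ containing $x$. A direct computation in the semidirect product shows that for $g = (v,1) \in G$ one has $g x g^{-1} = (v - {}^x v,\, x)$, which lies in $P$ precisely when $v \in \cent{V}{x}$, while $g \in \norm{G}{P}$ precisely when $v \in \cent{V}{P}$. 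Hence the task reduces to producing a $P$-module $V$ with
\[
\cent{V}{P} = 0 \qquad\text{and}\qquad \cent{V}{x} \ne 0 \ \text{ for every non-identity } x \in P.
\]

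To build such a $V$, fix any prime $q \ne p$ and set $V = \bigoplus_{C} W_C$, where $C$ ranges over the subgroups of $P$ of order $p$ (these exhaust the non-trivial cyclic subgroups since $P$ has exponent $p$) and $W_C$ is the $\mathbb{F}_q P$-complement of the trivial submodule in the permutation module $\mathbb{F}_q[P/C]$. Maschke's theorem (applicable since $q \nmid |P|$) together with Frobenius reciprocity yields $\dim_{\mathbb{F}_q} \mathbb{F}_q[P/C]^P = 1$, so $W_C^P = 0$ for each $C$ and consequently $\cent{V}{P} = 0$. For the second property, the cosets of $P/C$ fixed by $C$ under left multiplication are precisely those $gC$ with $g \in \norm{P}{C}$, so $\mathbb{F}_q[P/C]^C$ contains the $[\norm{P}{C}:C]$ basis vectors indexed by the fixed cosets. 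Since $P$ is non-cyclic, $C$ is a proper subgroup of $P$, and the standard fact that proper subgroups of a $p$-group lie properly in their normalisers gives $[\norm{P}{C}:C] \ge p$; subtracting the one-dimensional trivial contribution leaves $\dim W_C^C \ge p - 1 \ge 1$. Therefore $\cent{V}{x} \supseteq W_{\langle x \rangle}^{\langle x \rangle} \ne 0$ for every non-identity $x \in P$.

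Combining these facts, given $1 \ne x \in P$ and $0 \ne v \in \cent{V}{x}$, the element $g = (v,1)$ satisfies $gxg^{-1} = x \in P$ but $g \notin \norm{G}{P}$, so $x$ lies in the distinct Sylow $p$-subgroups $P$ and $P^g$. Any other $p$-element of $G$ is a conjugate of some element of $P$ and hence also lies in at least two Sylow $p$-subgroups; the identity is in every Sylow, and there are several because $\cent{V}{P} = 0$ together with $V \ne 0$ force $P$ not to be normal in $G$. The solvability of $G$ is immediate from $V$ being abelian and $P$ being nilpotent. I expect the fixed-point analysis of $V$ to be the delicate step: once the simultaneous conditions $\cent{V}{P} = 0$ and $\cent{V}{x} \ne 0$ for every non-identity $x \in P$ are secured, the verification that $G$ has a redundant Sylow $p$-subgroup is a routine semidirect-product calculation.
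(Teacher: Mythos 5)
Your proof is correct, and it takes a genuinely different, more elementary route than the paper. The paper proves Theorem A (as Theorem \ref{asol}) by citing Turull's theorem to get a solvable $p'$-group $N$ of Fitting height $n=\log_p|P|$ on which $P$ acts with $C_N(P)=1$, and then uses the Higman--Thompson theorem on fixed-point-free automorphisms of prime order (this is where exponent $p$ enters) to force $C_N(x)\neq 1$ for every nontrivial $x\in P$, feeding this into the coprime criterion of Corollary \ref{cent}. You replace both of these inputs by an explicit elementary abelian module: $V=\bigoplus_C W_C$ over $\F_q$ with $q\neq p$, where $C$ runs over the order-$p$ subgroups of $P$ and $W_C$ is a Maschke complement to the one-dimensional fixed space in $\F_q[P/C]$; the counts $\dim \F_q[P/C]^P=1$ and $[N_P(C):C]\geq p$ (proper subgroups of $p$-groups grow in their normalizers, which is where non-cyclicity is used, while exponent $p$ guarantees every nontrivial $x$ generates one of the $C$'s) give exactly $C_V(P)=0$ and $C_V(x)\neq 0$ for all $1\neq x\in P$. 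Your semidirect-product computation, which in effect re-derives Corollary \ref{cent} in this split case via $N_G(P)=C_V(P)P=P$ and $g=(v,1)\in C_G(x)\setminus N_G(P)$ for $0\neq v\in C_V(x)$, is correct, and you also correctly handle the identity, the passage from elements of $P$ to all of $G_p$ by Sylow conjugacy, and the non-normality of $P$. What each approach buys: yours is self-contained and produces witnesses $G=V\rtimes P$ of Fitting height two with $V$ elementary abelian, fully sufficient for Theorem A as stated; the paper's argument, at the cost of invoking a deep construction, proves the stronger Theorem \ref{asol}, namely that the normal $p$-complement can be chosen of Fitting height exactly $n$, which your construction does not give.
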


Our proof of  Theorem A relies on a deep construction of A. Turull \cite{tur}. 

Our second main result is a full characterization of groups $G$ isomorphic to a symmetric group or to an alternating group for which $G$ has a redundant Sylow $p$-subgroup. As noted before, it follows from \cite{hv1} that no symmetric
group $G$ has this property.

\begin{thmB}
	Let $p$ be a prime and let $G$ be $A_{n}$ or $S_{n}$ with $n
	\geq \max\{ 6, p \}$. The group $G$ has a redundant Sylow $p$-subgroup if and only if $p=2$  and $G = A_{n}$ with $n=\sum_{i=r}^{k}a_i2^{i}$, where $a_{r}, a_{r+1}, \ldots , a_{k} \in \{0,1\}$, $a_{r} = a_{k} =1$ and the following conditions are satisfied:
	
	\begin{itemize}
\item  $\sum_{i=1}^{k}a_i\equiv 1 \pmod{2}$ if $n$ is odd.

\item $r\geq 2$ is even and $\sum_{i=r}^{k}a_i\equiv 1 \pmod{2}$ if $n$ is even.
\end{itemize}
\end{thmB}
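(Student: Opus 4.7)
The plan is to apply Lemma~\ref{equi} throughout: a finite group has no redundant Sylow $p$-subgroup if and only if some $p$-element lies in a unique Sylow $p$-subgroup. For $p$ odd we have $\Syl_p(A_n)=\Syl_p(S_n)$ because $[S_n:A_n]=2$ is coprime to $p$, and every $p$-element of $S_n$ is automatically in $A_n$ (since $p$-cycles are even for odd $p$); combined with the result of \cite{hv1} which produces a $p$-element of $S_n$ in a unique Sylow $p$-subgroup, this handles $S_n$ and $A_n$ for all odd $p$, and the case $G=S_n$, $p=2$ is also from \cite{hv1}. Thus the remaining case is $G=A_n$ with $p=2$, which occupies the rest of the argument.

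For $p=2$ and $G=A_n$, write $n=\sum_{i=r}^{k}a_i 2^i$ with $a_r=a_k=1$ and let $s=\sum_i a_i$. A Sylow $2$-subgroup of $S_n$ factors as $\prod_{i\,:\,a_i=1}W_i$, where $W_i=C_2\wr\cdots\wr C_2$ (with $i$ factors) is the Sylow $2$-subgroup of $S_{2^i}$ stabilizing a nested partition of a $2^i$-point block. Let $x_0$ be the product of one full $2^i$-cycle per set bit. A nested-partition argument --- by induction inside each $W_i$ and then across the product decomposition --- shows $x_0$ lies in a unique Sylow $2$-subgroup of $S_n$, and a direct cycle-parity count gives its sign $(-1)^{n-s}$. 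Hence $x_0\in A_n$ precisely when $n\equiv s\pmod{2}$, which covers the non-redundant cases ($n$ odd with $s$ odd, and $n$ even with $s$ even). When $n$ is even, $s$ is odd and $r$ is odd, I modify $x_0$ on the lowest ($2^r$) block by replacing the $2^r$-cycle $c_r$ with an even element $y_r$ of $A_{2^r}$ that itself lies in a unique Sylow $2$-subgroup of $S_{2^r}$. For $r=1$ take $y_1=1$, so that the two resulting fixed points force the size-$2$ block of the nested partition; for odd $r\geq 3$ take $y_r$ to be a product of disjoint cycles of lengths $2^{r-1},2^{r-2},\ldots,2$ together with two fixed points. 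A recursive nested-partition argument confirms $y_r$ is in a unique Sylow of $S_{2^r}$, and a cycle-length parity count gives sign $(-1)^{r-1}=+1$ whenever $r$ is odd. The product $y_r\cdot\prod_{j\neq r}c_{i_j}$ then lies in $A_n$ and in a unique Sylow $2$-subgroup of $S_n$, hence of $A_n$.

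The remaining and main obstacle is the redundant direction: in the remaining cases ($n$ odd with $s$ even, or $n$ even with $s$ odd and $r$ even $\geq 2$), one must show that no $2$-element of $A_n$ lies in a unique Sylow $2$-subgroup of $A_n$. Given a candidate $x\in A_n$ and a Sylow $R$ of $A_n$ containing $x$, the relevant criterion is that $x$ lies in a unique Sylow of $A_n$ if and only if every Sylow of $S_n$ containing $x$ contains $R$, equivalently intersects $A_n$ in $R$. My strategy is to exhibit, for each such $x$, a second Sylow $\tilde{P}'$ of $S_n$ with $x\in\tilde{P}'$ and $\tilde{P}'\cap A_n\neq R$. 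The key structural input I plan to prove is: for even $r\geq 2$, no even $2$-element of $S_{2^r}$ lies in a unique Sylow $2$-subgroup of $S_{2^r}$. The base case $r=2$ is the observation that $V_4\leq S_4$ sits in all three Sylow $2$-subgroups of $S_4$, and the inductive step exploits the fact that in $W_r=W_{r-1}\wr C_2$ the top-level swap is a product of $2^{r-1}$ transpositions --- an even permutation precisely when $r\geq 2$ --- which creates additional $x$-invariant top-level partitions for every even $2$-element $x$. Transferring this obstruction through the product decomposition $P=\prod W_{i_j}$ of the Sylow of $S_n$ and tracking carefully when distinct Sylows of $S_n$ can intersect $A_n$ in the same subgroup yields the redundant conclusion. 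The bulk of the work, and the main obstacle, is running this argument uniformly over all candidate $x$ and all choices of $R$, using that the sign condition $x\in A_n$ forces a parity mismatch at some even-level block.
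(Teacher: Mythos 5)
Your reduction for odd $p$ and for $G=S_n$, and your constructive (non-redundant) half for $p=2$, $G=A_n$ are fine and essentially the paper's route: the elements you build are exactly the types $x(n)$ and $y(n)$ used in the paper, your parity bookkeeping agrees with its Lemma~\ref{num}, and the nested-partition uniqueness argument is Theorem~\ref{PSym} / the positive half of Theorem~C. The genuine gap is the redundant direction, which you yourself call the main obstacle, and the strategy you sketch for it does not work as stated. First, your key structural input (``for even $r\geq 2$ no even $2$-element of $S_{2^r}$ lies in a unique Sylow $2$-subgroup of $S_{2^r}$''), transferred blockwise with ``a parity mismatch at some even-level block'', cannot reach all even $2$-elements: take $n=15=1+2+4+8$ (odd, $s=4$ even, so a redundant case) and let $x$ act as a $2$-cycle on the size-$2$ block, a $4$-cycle on the size-$4$ block, and an element of type $(4,2,1,1)$ on the size-$8$ block. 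Then $x\in A_{15}$, every block restriction lies in a unique Sylow $2$-subgroup of its block symmetric group, and the only even restriction sits on the odd level $i=3$, so there is no even-level mismatch to exploit; yet $x$ lies in several Sylow $2$-subgroups of $S_{15}$, but only because of cross-block coincidences (cycle lengths $4,4$ and $2,2$ and three fixed points spread over different blocks). What is needed is the global ``only if'' statement of Theorem~C: any $2$-element with at least three fixed points, or with two cycles of equal length greater than one, preserves a second block structure (obtained by permuting fixed points, or by re-pairing the two halves of the blocks carrying two equal-length cycles). Your block-local obstruction does not yield this.

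Second, the step from ``$x$ lies in more than one Sylow $2$-subgroup of $S_n$'' to ``$x$ lies in more than one Sylow $2$-subgroup of $A_n$'' requires the injectivity of the map $P\mapsto P\cap A_n$ on $\Syl_2(S_n)$; you correctly flag that distinct Sylows of $S_n$ might a priori meet $A_n$ in the same subgroup, but you offer no argument to exclude it. The paper settles this using the self-normalization of Sylow $2$-subgroups of $S_n$ and $A_n$ for $n\geq 6$ (Carter--Fong \cite{cf}, Kondrat'ev \cite{kon}), which gives $\nu_2(S_n)=\nu_2(A_n)$ and hence that the natural surjection $\Syl_2(S_n)\to\Syl_2(A_n)$ is a bijection; from this, ``unique in $A_n$'' is equivalent to ``unique in $S_n$'', and the redundant direction is immediate from Theorem~C together with the parity lemma. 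Without this input (or a substitute), your criterion ``find $\tilde P'$ with $x\in\tilde P'$ and $\tilde P'\cap A_n\neq R$'' cannot be verified, so the redundant half of Theorem~B remains unproved in your proposal.
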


In the case $n=5$ it is easy to check that $S_5$ and $A_5$ do not possess a redundant Sylow $p$-subgroup for $p \in \{2,3,5\}$. In the case of alternating groups, we prove Theorem B as a consequence of a  characterization of the  $2$-elements of a symmetric  group that belong to a unique Sylow $2$-subgroup. 
This result seems of independent interest.

\begin{thmC}
Let $n\geq 2$ and let $x\in (S_n)_2$. Then $x$ lies in a unique Sylow $2$-subgroup of $S_n$ if and only if $x$ has at most two fixed points and all cycles of $x$  of lengths bigger than one have different lengths.
\end{thmC}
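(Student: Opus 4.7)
The plan is to prove both directions using the standard iterated wreath product description of Sylow $2$-subgroups of $S_n$: if $n = \sum 2^{a_i}$ with $a_1 > a_2 > \cdots > a_s$, a Sylow $2$-subgroup $P$ has orbits of sizes $2^{a_1},\dots,2^{a_s}$ on $\{1,\dots,n\}$ and on each orbit is the stabilizer of a complete binary tree structure. Throughout I will use that Sylow $2$-subgroups of $S_n$ are self-normalizing, so whenever $\sigma \in C_{S_n}(x) \setminus P$ the conjugate $P^\sigma$ is a distinct Sylow that still contains $x$.

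For the forward direction I argue the contrapositive in three cases. If $m_0 \geq 3$, take $\sigma = (f_1\,f_2\,f_3)$ on three fixed points of $x$: it commutes with $x$ and has order $3$, so it lies outside every $2$-subgroup containing $x$ and $P^\sigma$ gives a second Sylow. If some $m_i \geq 3$ with $i \geq 1$, pick three cycles $C_1, C_2, C_3$ of length $2^i$ and let $\sigma$ cyclically permute them position by position (matching consecutive powers of $x$), so that $\sigma$ has order $3$ and commutes with $x$; the same argument applies. The remaining case $m_i = 2$ for some $i \geq 1$ is the main obstacle, because $C_{S_n}(x)$ can then be a $2$-group contained in a single Sylow (as already happens for $x = (12)(34) \in S_4$, whose centralizer is itself a Sylow), so no odd-order commuting element is available. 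Here I would exhibit two Sylows directly. Writing the two cycles as $C_1 = (a_0,\dots,a_{2^i-1})$ and $C_2=(b_0,\dots,b_{2^i-1})$, take a first Sylow $P_1$ whose tree has $C_1$ and $C_2$ as sibling level-$i$ sub-blocks (so that $x$ restricts to a $2^i$-cycle on each), and a second Sylow $P_2$ agreeing with $P_1$ above level $i+1$ and on the complement of $C_1 \cup C_2$, but whose two level-$i$ sub-blocks at that position are the alternating-parity sets $\{a_{2j}, b_{2j}\}_j$ and $\{a_{2j+1}, b_{2j+1}\}_j$ (so that $x$ now swaps them). Both contain $x$ but they differ at level $i$.

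For the backward direction I would induct on $n$, with $n \leq 2$ as the trivial base. Assume the cycle-type conditions. If the largest cycle of $x$ has length $2^{a_1}$ equal to the top binary digit of $n$ (the non-cascading case), then by distinctness it is the unique cycle of maximal length, and since no smaller $P$-orbit can hold it, its support must be the top $P$-orbit of any Sylow $P \ni x$; the tree on this orbit is uniquely determined by the iterated bisections given by the orbits of the powers $x^{2^j}$, exactly as in the single-$2^k$-cycle analysis on $S_{2^k}$, and on the complement the inductive hypothesis applies. Otherwise, the conditions $m_0 \leq 2$ and distinctness of non-trivial cycle lengths force $n = 2^{a_1}$, $m_0 = 2$, and exactly one cycle of each length $2, 4, \dots, 2^{a_1-1}$ (the cascading case, where the top bit of $n$ is produced by the carries $m_0 = 2$, $m_1 = m_2 = \cdots = 1$ in the binary expansion). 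The two fixed points prevent $x$ from swapping the two level-$1$ blocks of the unique $P$-orbit, so $x$ preserves them, and the only compatible distribution is to put the $2^{a_1-1}$-cycle alone in one block and the remaining cycles together with the two fixed points in the other; the first block is handled by the single-cycle analysis and the second is a strictly smaller cascading instance to which induction applies.

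The main difficulty is the forward case $m_i = 2$, which requires the explicit alternating-parity construction above rather than a quick conjugation argument.
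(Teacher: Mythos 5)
Your proposal is correct and follows essentially the same route as the paper's proof: both use the bijection between Sylow $2$-subgroups of $S_n$ and complete binary block structures, both handle the hard forward case (exactly two cycles of the same length $2^i$, $i\geq 1$) by the identical even/odd--position ``parity swap'' of the two sibling blocks, and both prove the backward direction by induction on $n$, splitting into the case where the largest cycle equals the top binary digit of $n$ and the cascading case $n=2^{a_1}$ with two fixed points. The only (cosmetic) difference is that for at least three fixed points or at least three equal cycles you invoke an order-$3$ centralizing element together with self-normalization of Sylow $2$-subgroups, whereas the paper directly exhibits a second preserved block structure; both arguments work.
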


We also obtain a full characterization of the groups $\SL(2,q)$ and $\PSL(2,q)$, where $q$ is a prime power, with a redundant Sylow $p$-subgroup.

\begin{thmD}
Let $p$ be a prime and let $G$ be $\SL(2,q)$ or $\PSL(2,q)$ with $q$ a prime power. Then $G$ has a redundant Sylow $p$-subgroup if and only if $p=2$ and $q$ is none of the following: $2^{k}-1$, $2^{k}$ nor $2^{k}+1$ for $k$ an integer.
\end{thmD}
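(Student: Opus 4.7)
The plan is to invoke Lemma~\ref{equi} throughout and reduce to the question of whether some $p$-element of $G$ lies in a unique Sylow $p$-subgroup. Three easy cases cover everything except $p = 2$ with $q$ odd: if $p$ is odd with $p \mid q$, or if $p = 2$ with $q$ even, then $p$ is the defining characteristic and Corollary~\ref{lie} applies; and if $p$ is odd with $p \nmid q$, then the Sylow $p$-subgroup of $\SL(2,q)$ (or $\PSL(2,q)$) is contained in a cyclic maximal torus, so is itself cyclic, and its generator lies in a unique Sylow $p$-subgroup.

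For the remaining case I take $q$ odd and $p = 2$, and pick $\epsilon \in \{\pm 1\}$ with $(q - \epsilon)_2 \geq 4$; set $2^e = (q - \epsilon)_2$ and $m = (q - \epsilon)/2^e$ (so $m$ is odd, $e \geq 2$, and $q = 2^k \pm 1$ is equivalent to $m = 1$). Let $T$ be the cyclic maximal torus of order $q - \epsilon$ in $G = \SL(2,q)$; then $N_G(T) = T \rtimes \langle w \rangle$ with $w$ inverting $T$ and $w^2 = -I$, and a Sylow $2$-subgroup of $G$ is $P = \langle T_2, w \rangle \cong Q_{2^{e+1}}$, also a Sylow $2$-subgroup of $N_G(T)$.

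The counting step is to show that $N_G(T)$ has exactly $m$ Sylow $2$-subgroups. Writing $T = T_2 \times T_{\mathrm{odd}}$ and using $w t w^{-1} = t^{-1}$, a short computation gives $t w t^{-1} = t^2 w$ for any $t \in T$; for $t \in T_{\mathrm{odd}} \setminus \{1\}$ the element $t^2$ lies in $T_{\mathrm{odd}}$ and is not $1$, hence $t w t^{-1} \notin P$, yielding $N_{N_G(T)}(P) = P$ and the count $m$. I then conclude that, for any element $x \in G$ of $2$-power order $\geq 4$, writing $T' = C_G(x)$ (a conjugate of $T$), the cyclic subgroup $\langle x \rangle$ is contained in $T'_2 \triangleleft N_G(T')$ and therefore in \emph{every} Sylow $2$-subgroup of $N_G(T')$, so $x$ belongs to at least $m$ Sylow $2$-subgroups of $G$. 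Moreover, if $|x| \geq 8$ the count is \emph{exactly} $m$: any Sylow $2$-subgroup $P''$ of $G$ containing $x$ must contain $\langle x \rangle$ in its unique cyclic subgroup of order $2^e$ (all cyclic subgroups of $Q_{2^{e+1}}$ of order $\geq 8$ sit inside the unique $C_{2^e}$), so $\langle x \rangle \triangleleft P''$, hence $P'' \leq N_G(\langle x \rangle) = N_G(T')$.

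The two directions of the theorem then follow. If $m = 1$ and $e \geq 3$, any element of order $2^e$ in $T$ lies in a unique Sylow $2$-subgroup of $\SL(2,q)$, and its image of order $2^{e-1}$ in $\PSL(2,q)$ lies in a unique Sylow $2$-subgroup there (Sylow $2$-subgroups of $\PSL(2,q)$ containing $\bar y$ are in bijection with those of $\SL(2,q)$ containing a lift $y$, because $-I$ is central); the remaining cases $e = 2$ correspond to $q \in \{3,5\}$ and I handle them directly. If $m > 1$, then every $2$-element of $\SL(2,q)$ lies in at least two Sylow $2$-subgroups: the identity and $-I$ lie in all Sylows, and every element of order $\geq 4$ lies in at least $m \geq 3$ Sylows by the previous paragraph; the same holds for $\PSL(2,q)$ via the bijection of Sylow $2$-subgroups, noting that involutions of $\PSL(2,q)$ lift to order-$4$ elements of $\SL(2,q)$. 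The main technical subtlety is the order-$4$ case when $e \geq 3$, where $\langle x \rangle$ need not be normal in every Sylow $2$-subgroup containing it and so the clean identification of ``Sylow $2$-subgroups of $G$ containing $\langle x \rangle$'' with ``Sylow $2$-subgroups of $N_G(\langle x \rangle)$'' may fail; happily, the redundancy direction only needs the lower bound ``at least $m$'', which still follows from the $m$ Sylow $2$-subgroups of $N_G(T')$.
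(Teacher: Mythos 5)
Your proposal is correct, but it takes a genuinely different route from the paper in the main case $p=2$, $q$ odd. The paper first reduces $\PSL(2,q)$ to $\SL(2,q)$ by comparing Sylow normalizers via Dickson/Kondrat'ev (stated only for $q>13$), then handles $q=2^k\pm1$ by a fusion/character-table computation in $\SL(2,q)$ (showing $|x^G\cap P|=2$ and $C_G(x)=\langle x\rangle$ for $x$ of order $2^k$, so equality holds in Lemma \ref{necessary}), and handles the remaining $q$ by proving that no $2$-element has a $2$-group centralizer (Theorem \ref{centra}) and comparing $|C_G(x)|$ with the normalizer order of a Sylow $2$-subgroup, with GAP checks for small $q$. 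You instead argue uniformly through the cyclic maximal torus $T$ of order $q-\epsilon$ with $(q-\epsilon)_2=2^e\ge 4$: the count $\nu_2(N_G(T))=m$ (the odd part of $q-\epsilon$), the observation that $T_2$ is normal in $N_G(T)$ and hence lies in every Sylow $2$-subgroup of $N_G(T)$, and, for $|x|\ge 8$, the generalized quaternion structure forcing every Sylow $2$-subgroup of $G$ containing $x$ into $N_G(C_G(x))$; this yields both directions at once (redundancy when $m>1$, an element in a unique Sylow $2$-subgroup when $m=1$, $e\ge3$), and your reduction from $\PSL$ to $\SL$ via the central involution $-I$ (which lies in every Sylow $2$-subgroup, so $P\mapsto\overline{P}$ is a membership-preserving bijection) is simpler than the paper's and valid for all $q$. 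Your route buys an essentially GAP-free, self-contained argument that moreover determines the exact number $m$ of Sylow $2$-subgroups containing any element of order at least $8$; the paper's route buys the extra structural statement of Theorem \ref{centra} and fits the counting criteria of Section 2. Two cosmetic points: $N_G(T)$ is not literally $T\rtimes\langle w\rangle$ since $w^2=-I\in T$ (you only use $|N_G(T):T|=2$, that $w$ inverts $T$ and $w^2=-I$, so nothing breaks), and in the direction $m>1$ you should record that $\nu_2(G)\ge m\ge 3$ (immediate from your own count) so that the identity and $-I$ indeed lie in more than one Sylow $2$-subgroup; the cases $q\in\{3,5\}$ you defer are trivial since there the Sylow $2$-subgroup count of $N_G(T)$ is $1$ and all subgroups of $Q_8$ are normal (or one simply notes the Sylow $2$-subgroup is normal, respectively TI).
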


As mentioned by the referee, it is interesting to note that $q=9$ is the only prime power of the form $2^k\pm1$ that is not a prime. The remaining prime powers of the form $2^k\pm1$ are the Fermat and Mersenne primes. This follows from elementary number theory.

Theorems B and D, along with Weigel's observation,  suggest that  it is not easy to find almost quasisimple groups with a redundant Sylow $p$-subgroup for any odd prime. 
For instance, we do not know examples of simple groups with a redundant Sylow $p$-subgroup for $p>11$.
We have to go to the general linear groups of arbitrarily large rank to find them.

\begin{thmE}
Let $p$ be an odd prime and let $q$ be a prime power such that $p$ is the $p$-part of $q-1$. Then $\GL(p,q)$ has a redundant Sylow $p$-subgroup.
\end{thmE}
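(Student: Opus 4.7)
The plan is to invoke Lemma \ref{equi}: it suffices to prove that every $p$-element of $G = \GL(p,q)$ lies in at least two Sylow $p$-subgroups. Write $q-1 = pm$ with $\gcd(m,p)=1$. A standard lifting-the-exponent computation gives $|G|_p = p^{p+1}$, realized by $P = T_p \rtimes \langle \sigma \rangle \cong C_p \wr C_p$, where $T_p \cong C_p^p$ is the Sylow $p$-subgroup of a split maximal torus $T$ and $\sigma$ is the permutation matrix of a $p$-cycle. The main tool will be the identity
\[
	n_p(x) = \frac{|x^G \cap P| \cdot |\cent{G}{x}|}{|\norm{G}{P}|}
\]
for the number $n_p(x)$ of Sylow $p$-subgroups of $G$ containing a $p$-element $x$.

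The preparatory step is to compute $\norm{G}{P}$. I will verify that $T_p$ is characteristic in $P$ as the unique abelian subgroup of order $p^p$: any abelian $A \not\leq T_p$ contains some $t\sigma^k$ with $k \neq 0$, whence $A \cap T_p \leq \cent{T_p}{\sigma^k} = \zent{P} \cap T_p \cong C_p$, forcing $|A| \leq p^2 < p^p$. Hence $\norm{G}{P} \leq \norm{G}{T_p} = T \rtimes S_p$. Inside this, $(t,w)$ normalizes $P$ if and only if $w \in \norm{S_p}{\langle\sigma\rangle} = \mathrm{AGL}(1,p)$ and the coordinates $t_1, \ldots, t_p$ lie in a single coset of $C_p \leq \F_q^*$; this yields $|\norm{G}{P}| = m \cdot p^{p+1}(p-1)$.

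Next I classify $p$-elements. Since $p$ is coprime to the characteristic of $\F_q$ and eigenvalues of a $p \times p$ matrix lie in some $\F_{q^d}$ with $d \leq p$, every $p$-element is semisimple of order $1$, $p$, or $p^2$. If $x$ is a scalar, then $n_p(x) = n_p(G) = |G|/|\norm{G}{P}|$ is enormous. If $x$ has order $p$ and is non-central, then $\cent{G}{x} = \prod_i \GL(m_i,q)$ for the eigenvalue multiplicities $(m_1, \ldots, m_k)$ with $k \geq 2$ and each $m_i < p$; the count is $|x^G \cap P| = p!/\prod m_i!$, with an additional $(p-1)p^{p-1}$ contribution from $T_p\sigma^j$ when all $m_i = 1$, and in either case the identity gives $n_p(x) \geq 2$ after a routine simplification. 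If $x$ has order $p^2$, then $x$ has irreducible characteristic polynomial of degree $p$, $x^p$ is a non-trivial scalar matrix, $\cent{G}{x} \cong \F_{q^p}^*$, and counting elements $t\sigma^k \in P$ with $k \neq 0$ and $(t\sigma^k)^p = x^p$ gives $|x^G \cap P| = (p-1) p^{p-1}$; the identity collapses to
\[
	n_p(x) = \frac{q^p - 1}{p(q-1)} = \frac{1 + q + \cdots + q^{p-1}}{p} \geq 2.
\]

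The main obstacle is the correct computation of $|\norm{G}{P}|$: the naive guess $|T \rtimes S_p|$ overcounts by a factor of $m^{p-1}(p-2)!$. Conjugating $\sigma$ by $t \in T$ produces a diagonal factor $(t_1 t_2^{-1}, t_2 t_3^{-1}, \ldots, t_p t_1^{-1})$ followed by $\sigma$, which lies in $T_p \sigma \subset P$ only when the coordinates of $t$ agree modulo $C_p$; and $\langle \sigma \rangle$ is normalized in $S_p$ only by $\mathrm{AGL}(1,p)$. Once $|\norm{G}{P}|$ is correct, every case reduces to an elementary inequality and the conclusion follows via Lemma \ref{equi}.
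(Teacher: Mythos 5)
Your proposal is correct, but it takes a genuinely different route from the paper. The paper proves Theorem E structurally: it shows (using that the base group of $C_p\wr C_p$ is its unique abelian maximal subgroup, together with the monomial description of the normalizer of the diagonal $p$-torsion group) that each $P\in\Syl_p(\GL(p,q))$ determines a \emph{unique} decomposition of $\F_q^p$ into $p$ lines permuted by $P$, and then, for every $p$-element $x$, exhibits a second such decomposition stabilized by $x$ by explicit linear algebra (summing eigenvectors when an eigenvalue repeats, a Vandermonde change of basis when $x$ is diagonal with all $p$ eigenvalues, and an explicit cyclically permuted basis when $x$ permutes the lines), concluding via Lemma \ref{equi}; no counting is needed, and the same framework also yields the non-redundancy statement for $1<n<p$. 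You instead compute $\norm{G}{P}$ exactly and run a conjugacy-class count through the identity $n_p(x)=|x^G\cap P|\,|\cent{G}{x}|/|\norm{G}{P}|$, which is the exact form of the paper's Lemma \ref{necessary} (you should include its one-line double-counting proof, counting $\{g\in G: x^g\in P\}$ in two ways). Your key computations check out: $|G|_p=p^{p+1}$, the base group is characteristic by your commuting argument, $\norm{G}{T_p}=T\rtimes S_p$ (this is exactly the paper's Lemma \ref{diag}, so either cite it or reprove the monomial shape), and the normalizer condition ``$w\in N_{S_p}(\langle\sigma\rangle)$ and all $t_i$ in one coset of $C_p$'' gives $|\norm{G}{P}|=(q-1)p^p(p-1)$, consistent e.g.\ with $\nu_3(\GL(3,4))=1120\equiv 1\pmod 3$. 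The class analysis (semisimple of order $1$, $p$, $p^2$; multinomial count in $T_p$ plus the $(p-1)p^{p-1}$ twisted-cycle contribution; $\cent{G}{x}$ a Singer cycle and $|x^G\cap P|=(p-1)p^{p-1}$ in the order-$p^2$ case, giving $n_p(x)=(q^p-1)/(p(q-1))$) is correct, though the ``routine simplification'' in the order-$p$ case should be written out (it reduces to $|x^G\cap P|\,|\cent{G}{x}|\geq 2(q-1)p^p(p-1)$, which is easy since $\cent{G}{x}$ contains a maximal torus and some $\GL(m_i,q)$ with $m_i\geq 2$, or the extra twisted-cycle conjugates appear). What your approach buys is exact values of $n_p(x)$ for every $p$-element; what the paper's approach buys is the avoidance of any normalizer or class-size computation and a uniform treatment of both directions of Theorem \ref{GL}.
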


Note that the Sylow $p$-subgroups of the groups in Theorem E are $C_p\wr C_p$ (see Proposition 7.13 of \cite{sam}, for instance). In particular, they have exponent $p^2$, unlike our solvable examples. 
Theorem B provides examples of simple groups with a redundant Sylow $2$-subgroup of exponent $2^a$ with $a$ arbitrarily large, but we are not aware of such examples for odd primes.  
Our results may suggest that, perhaps, if $P$ is a
non-cyclic finite $p$-group then there exists a finite group
$G$ with Sylow $p$-subgroups isomorphic to $P$ such that $G$ has a
redundant Sylow $p$-subgroup.

Furthermore, in Section 7 we study the existence of  redundant Sylow subgroups in sporadic simple groups. In particular, we show that the Monster has a redundant Sylow $7$-subgroup, which is related to work in \cite{hv1, hv2}. 

In Section 2 we collect several criteria that will be useful to decide whether a given group has a redundant Sylow $p$-subgroup or not. Theorem A, on solvable groups,  is proved in Section 3.
We prove Theorems B and C, on alternating and symmetric groups, in Section 4. Then we prove Theorem E in Section 5 and Theorem D in Section 6.
 We conclude in Section 8 where we prove some results that relate the number of Sylow $p$-subgroups of a finite group $G$ with the existence of a redundant Sylow $p$-subgroup.
We also raise some questions, partially related to recent work of Gheri \cite{ghe} and Sambale-T\u{a}rn\u{a}uceanu \cite{st}, that we think deserve further investigation.

It may also be worth remarking that it seems that Schmidt's examples \cite{sch} were inspired from the theory of fusion systems. It seems reasonable to think that fusion systems could be helpful to study the condition provided by Lemma \ref{equi}. 
We have not pursued this here, however.

To conclude this Introduction, in this paragraph let $G$ be an infinite group. Recall that a $p$-subgroup of $G$ is a subgroup in which every element has $p$-power order and a Sylow $p$-subgroup of $G$ is a $p$-subgroup which is maximal for inclusion among all $p$-subgroups in $G$. A theorem of Neumann \cite{Neumann} states that if $G$ is the union of $m$ proper subgroups where $m$ is finite and as small as possible, then the intersection of these subgroups is a subgroup of finite index in $G$. This is the reason why one may assume that $G$ is finite when computing $\sigma(G)$, the minimal number of proper subgroups needed to cover $G$. It would be interesting to know if there is a local analogue of Neumann's theorem.  Perhaps one should assume that $G$ has only finitely many Sylow $p$-subgroups.

\section{General criteria for the existence of \\ a redundant Sylow $p$-subgroup}

Let $p$ be a prime. For a finite group $G$, let $\Syl_{p}(G)$ denote the set of Sylow $p$-subgroups of $G$, let $\nu_p(G) = |\Syl_{p}(G)|$, and let $O_{p}(G)$ denote the largest normal $p$-subgroup in $G$. For an element $x$ in a finite group $G$, let $x^{G}$ denote the conjugacy class of $x$ in $G$. The following fundamental result allows us to interpret the concept of redundant Sylow $p$-subgroups in a convenient way.

\begin{lem}
\label{equi}
Let $G$ be a finite group and let $p$ be a prime. Then $G$ does not have a redundant Sylow $p$-subgroup if and only if there exists $x\in G_p$ such that $x$ belongs to a unique Sylow $p$-subgroup.
\end{lem}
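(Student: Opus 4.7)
The plan is to prove both directions by an elementary argument using only Sylow's theorem (all Sylow $p$-subgroups are conjugate) and the basic fact that every $p$-element lies in at least one Sylow $p$-subgroup. The proof of the forward direction will be done by contrapositive, and the reverse direction will use the conjugation action on Sylow $p$-subgroups.

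For the forward direction ($\Rightarrow$), I would argue by contrapositive: assume that every $x \in G_p$ lies in at least two distinct Sylow $p$-subgroups of $G$, and show that $G$ has a redundant Sylow $p$-subgroup. Pick any $P \in \Syl_p(G)$; I claim that $\Syl_p(G) \setminus \{P\}$ still covers $G_p$. Indeed, given any $y \in G_p$, either $y \notin P$, in which case $y$ belongs to some Sylow $p$-subgroup $Q \neq P$ simply because $y$ lies in \emph{some} Sylow $p$-subgroup; or $y \in P$, in which case the hypothesis guarantees that $y$ lies in some other Sylow $p$-subgroup $Q \neq P$. Either way $y$ is covered, so $P$ is redundant.

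For the reverse direction ($\Leftarrow$), suppose $x \in G_p$ lies in a unique Sylow $p$-subgroup $P$. I need to show that no proper subset of $\Syl_p(G)$ covers $G_p$; equivalently, that for every $Q \in \Syl_p(G)$, some $p$-element of $G$ lies in $Q$ and in no other Sylow $p$-subgroup. By Sylow's theorem we may write $Q = P^g$ for some $g \in G$. Conjugation by $g$ is a bijection on $\Syl_p(G)$, so if $x$ is contained in exactly one Sylow $p$-subgroup (namely $P$), then $x^g$ is contained in exactly one Sylow $p$-subgroup (namely $P^g = Q$). Since $x^g \in G_p$, removing $Q$ from any cover would leave $x^g$ uncovered. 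Hence every Sylow $p$-subgroup is needed.

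I do not anticipate a serious obstacle here; the statement is essentially an unpacking of definitions together with the transitivity of conjugation on $\Syl_p(G)$. The only point that requires a moment's care is the reverse direction, where one must remember that the hypothesis supplies a single special element $x$, and use conjugation to propagate the obstruction to every Sylow $p$-subgroup simultaneously.
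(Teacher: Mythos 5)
Your proof is correct and follows essentially the same route as the paper: both directions are a direct unpacking of the definition of a redundant Sylow $p$-subgroup, with the reverse direction using conjugacy of Sylow $p$-subgroups to transport the special element $x$ to every Sylow $p$-subgroup (a step the paper's proof leaves implicit but which you spell out). No gaps.
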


\begin{proof}
Let $\Syl_p(G)=\{P_1,P_2,\ldots, P_n\}$ with $n = \nu_{p}(G)$. The result is clear for $n=1$. Assume that $n > 1$. The group $G$ has a redundant Sylow $p$-subgroup if and only if $G_p=\bigcup_{i\neq j }P_i$ for some $j$. This happens if and only if $P_j\subseteq \bigcup_{i\neq j}P_i$, which is equivalent to saying that every element of $P_j$ lies in more than one Sylow $p$-subgroup.
\end{proof}

This has a number of consequences. Recall that a subgroup $H$ in a finite group $G$ is called a TI-subgroup (trivial intersection subgroup) if $H\cap H^g$ is $H$ or $\{1\}$ for every $g\in G$. A TI-subgroup which is also a Sylow $p$-subgroup is called a TI-Sylow $p$-subgroup. We have the following.

\begin{cor}\label{TI}
If $G$ is a group with TI-Sylow $p$-subgroups, then $G$ does not have a redundant Sylow $p$-subgroup. 
\end{cor}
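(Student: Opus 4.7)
The plan is to deduce this directly from Lemma \ref{equi} by exhibiting a $p$-element that lies in a unique Sylow $p$-subgroup. Let $P$ be a TI-Sylow $p$-subgroup of $G$. First I would dispose of the trivial case where $p \nmid |G|$: then $P = 1$, $\Syl_p(G) = \{P\}$, and no proper subset can cover $G_p = \{1\}$, so $G$ has no redundant Sylow $p$-subgroup.

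Assume now that $P \neq 1$. The key observation is that the TI property forces $P$ to meet every other Sylow $p$-subgroup trivially. Indeed, any other Sylow $p$-subgroup $Q$ has the form $P^g$ for some $g \in G$ (by Sylow's theorem), and the TI hypothesis gives $P \cap P^g \in \{P, 1\}$; if $Q \neq P$ then $P \cap Q = P \cap P^g = 1$.

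Now pick any nonidentity element $x \in P$. By the preceding paragraph, $x \notin Q$ for every Sylow $p$-subgroup $Q \neq P$, so $P$ is the unique Sylow $p$-subgroup containing $x$. Applying Lemma \ref{equi} in the contrapositive direction then yields that $G$ does not have a redundant Sylow $p$-subgroup. There is essentially no obstacle here; the whole content lies in the elementary TI unpacking, and the statement is really a direct corollary of Lemma \ref{equi}.
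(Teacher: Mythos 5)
Your proof is correct and follows the same route as the paper: unpack the TI hypothesis to see that every nonidentity $p$-element lies in a unique Sylow $p$-subgroup, then apply Lemma \ref{equi}. Your extra care with the case $P=1$ is fine but not needed beyond what the paper's argument already covers.
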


\begin{proof}
If $G$ is a group with TI-Sylow $p$-subgroups, then every non-identity element from $G_p$ belongs to a unique Sylow $p$-subgroup. The result follows from Lemma \ref{equi}.
\end{proof}

A characterization of finite groups possessing TI-Sylow $2$-subgroups has been obtained by Suzuki \cite{suz}. For a similar but weaker characterization for odd primes, see the work of Ho \cite{Ho}. We note that cyclic Sylow $p$-subgroups of finite simple groups are TI-subgroups by \cite{Blau}.  

\begin{cor}
\label{op}
Let $G$ be a finite group. Let $P$ be a Sylow $p$-subgroup of $G$ and suppose that $P/O_p(G)$ is cyclic. 
Then $G$ does not have a redundant Sylow $p$-subgroup.
\end{cor}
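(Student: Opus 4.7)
The plan is to apply Lemma \ref{equi} by exhibiting an element $x \in G_p$ that lies in exactly one Sylow $p$-subgroup of $G$. The key reduction is to pass modulo $O_p(G)$, exploiting the fact that $O_p(G)$, being a normal $p$-subgroup, is contained in every Sylow $p$-subgroup of $G$. So the Sylow $p$-subgroups of $\bar G := G/O_p(G)$ are precisely the groups $Q/O_p(G)$ for $Q \in \Syl_p(G)$, and by hypothesis these are all cyclic, conjugate to $\bar P := P/O_p(G)$.

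Next, I would choose $\bar x$ to be a generator of the cyclic Sylow subgroup $\bar P$ of $\bar G$, and let $x$ be any preimage of $\bar x$ in $P$. Since $P$ is a $p$-group, $x$ is automatically a $p$-element of $G$, so $x \in G_p$.

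It remains to show that $P$ is the only Sylow $p$-subgroup containing $x$. Suppose $x \in Q$ for some $Q \in \Syl_p(G)$. Since $O_p(G) \le Q$, the image $Q/O_p(G)$ is a Sylow $p$-subgroup of $\bar G$ that contains $\bar x$, and hence contains the cyclic group $\langle \bar x \rangle = \bar P$. Comparing orders forces $Q/O_p(G) = \bar P$, hence $Q = P$. By Lemma \ref{equi}, $G$ has no redundant Sylow $p$-subgroup.

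I do not foresee a substantive obstacle here: the argument is essentially the observation that a generator of a cyclic Sylow subgroup in the quotient $G/O_p(G)$ lifts to a $p$-element of $G$ whose only Sylow $p$-overgroup is $P$.
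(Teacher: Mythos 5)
Your proof is correct and follows essentially the same route as the paper: both exploit that $O_p(G)$ lies in every Sylow $p$-subgroup and that a generator of the cyclic quotient $P/O_p(G)$ pins down a unique Sylow $p$-subgroup, then invoke Lemma \ref{equi}. The only cosmetic difference is that you argue with a lift of the generator in $G$ itself, whereas the paper first reduces to the case $O_p(G)=1$; the content is the same.
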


\begin{proof}
Observe that $O_{p}(G)$ is contained in every Sylow $p$-subgroup of $G$. It follows by definition that $G$ has a redundant Sylow $p$-subgroup if and only if $G/O_{p}(G)$ does. We may thus assume that $O_{p}(G) = 1$. Now, if $x$ generates a Sylow $p$-subgroup $P$, then $P$ is the unique Sylow $p$-subgroup that contains $x$. The result follows from Lemma \ref{equi}.
\end{proof}

Next, we present a proof of Weigel's observation on groups of Lie type in characteristic $p$.

\begin{cor}
\label{lie}
A finite group of Lie type in characteristic $p$ does not have a redundant Sylow $p$-subgroup.
\end{cor}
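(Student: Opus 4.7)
The plan is to invoke Lemma~\ref{equi}: it suffices to exhibit a single $p$-element of $G$ that lies in a unique Sylow $p$-subgroup. When $G$ is a finite group of Lie type in characteristic $p$, the Sylow $p$-subgroups of $G$ are exactly the unipotent radicals of the Borel subgroups of $G$, and each Sylow $p$-subgroup is self-normalizing with normalizer equal to the corresponding Borel. In particular, distinct Borel subgroups have distinct unipotent radicals, so counting Sylow $p$-subgroups that contain a given unipotent element is the same as counting Borel subgroups that contain it.

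The witness I would use is a regular unipotent element $u \in G$. By a classical theorem of Steinberg, regular unipotent elements exist in every finite group of Lie type in characteristic $p$, and any such element is contained in a unique Borel subgroup of $G$ (equivalently, $u$ has a unique fixed point on the flag variety $G/B$). Consequently, $u$ lies in the unipotent radical of exactly one Borel subgroup, i.e.\ in a unique Sylow $p$-subgroup, and Lemma~\ref{equi} yields the conclusion.

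The only point that needs a little care is ensuring that the argument covers all the families usually packaged as ``groups of Lie type in characteristic~$p$'', namely the untwisted Chevalley groups together with the Steinberg, Suzuki and Ree variants. In every case one has a Borel subgroup $B = U \rtimes T$ with $U \in \Syl_p(G)$ and $B = N_G(U)$, and a regular unipotent element exists and is contained in a unique Borel; hence no case analysis is required. If a completely elementary witness is preferred in the split case $G = \GL(n,q)$, one may simply verify by direct computation that the unipotent matrix with a single Jordan block of size $n$ is contained in no Borel subgroup other than the one of upper triangular matrices, giving the same conclusion without appeal to Steinberg's theorem.
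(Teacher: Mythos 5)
Your argument is correct and is essentially the paper's own proof: both exhibit a regular unipotent element, use the fact that it lies in a unique Borel subgroup (hence in the unipotent radical of a unique Borel, i.e.\ a unique Sylow $p$-subgroup), and conclude via Lemma~\ref{equi}. The only difference is presentational: the paper descends from the algebraic group via $F$-stability (citing Carter), while you invoke Steinberg's theorem directly at the finite-group level.
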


\begin{proof}
Let $G$ be a connected reductive algebraic group defined over an algebraically closed field in positive characteristic $p$. Let $F$ be a Steinberg endomorphism. The finite group $G^F$ has a Sylow $p$-subgroup $U^F$ such that $N_{G^F}(U^{F}) = B^F$ where $B$ is an $F$-stable Borel subgroup of $G$. Each regular unipotent element of $G$ lies in a unique Borel subgroup. If the unipotent element is $F$-stable, the Borel subgroup will be also. Thus each regular unipotent element of $G^F$ will lie in just one Borel subgroup (and also in one Sylow $p$-subgroup) $B^F$ of $G^F$. See \cite[p. 131]{car}. Therefore, by Lemma \ref{equi}, groups of Lie type in characteristic $p$ do not have a redundant Sylow $p$-subgroup. 
\end{proof}

Note that $x\in G$ belongs to a unique Sylow $p$-subgroup of $G$ if and only if $x$ does not belong to any intersection of two different Sylow $p$-subgroups. Since defect groups are Sylow intersections, this suggests that this is related to block theory. For instance, there is the following connection with zeros of characters. 

\begin{cor}
Let $G$ be a group without a redundant Sylow $p$-subgroup. Then there exists $x\in G_p$ such that $\chi(x)=0$ for every $\chi\in\Irr(G)$ that does not belong to a $p$-block of full defect.
\end{cor}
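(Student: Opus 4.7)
The plan is to exploit Lemma \ref{equi}: since $G$ has no redundant Sylow $p$-subgroup, there exists $x\in G_p$ lying in a unique Sylow $p$-subgroup of $G$, and I claim this $x$ is the element required by the corollary.

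Let $\chi\in\Irr(G)$ belong to a $p$-block $B$ whose defect group $D$ is not a Sylow $p$-subgroup of $G$, and suppose for contradiction that $\chi(x)\neq 0$. The first step invokes the standard vanishing criterion that follows from Brauer's second main theorem (a basic result in any text on modular representation theory, such as Navarro's \emph{Characters and Blocks of Finite Groups}): any $p$-element $u$ with $\chi(u)\neq 0$ must be $G$-conjugate to an element of $D$. Applying this to $u=x$ yields some $g\in G$ with $x\in D^g$, and since $D^g$ is again a defect group of $B$, after replacement we may assume outright that $x\in D$.

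The second step uses the classical theorem of Green that every defect group of a $p$-block of $G$ is an intersection of Sylow $p$-subgroups of $G$. Because $D$ is not itself a Sylow $p$-subgroup, this forces $D$ to be contained in at least two distinct Sylow $p$-subgroups $P_1\neq P_2$. But then $x\in D\subseteq P_1$ and $x\in D\subseteq P_2$, contradicting the fact that $x$ lies in a unique Sylow $p$-subgroup of $G$. Therefore $\chi(x)=0$, as required.

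The main obstacle is simply pinning down the two block-theoretic inputs — the vanishing consequence of Brauer's second main theorem, and Green's description of defect groups as Sylow intersections — and giving clean references for each. Once these are in place, the proof is essentially a one-line contradiction combining Lemma \ref{equi} with these two facts, and no further computation is needed.
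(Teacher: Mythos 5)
Your proposal is correct and follows essentially the same route as the paper: pick, via Lemma \ref{equi}, a $p$-element lying in a unique Sylow $p$-subgroup, note that a defect group of a non-full-defect block is an intersection of two distinct Sylow $p$-subgroups (the paper cites Corollary 4.21 of Navarro's book, i.e.\ Green's Sylow-intersection theorem), and conclude by the vanishing criterion for $p$-elements not meeting any conjugate of the defect group (Corollary 5.9 of Navarro, a consequence of Brauer's second main theorem). The only cosmetic difference is that you phrase it as a contradiction while the paper argues directly.
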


\begin{proof}
Let $B$ be a $p$-block with defect group $D$ such that $|D|<|G|_p$. 
By Lemma \ref{equi}, there exists $x\in G_p$ that does not belong to the intersection of any two different Sylow subgroups. By Corollary 4.21 of \cite{navb}, $x$ does not belong to any conjugate of $D$. Now, Corollary 5.9 of \cite{navb}, implies that $\chi(x)=0$ for any $\chi\in\Irr(B)$, as wanted.
\end{proof}

Next, we obtain a characterization of the property ``there exists $x\in G$ that belongs to a unique Sylow $p$-subgroup of $G$".

\begin{lem}\label{necessary}
Let $p$ be a prime, let $P \in \Syl_p(G)$ and let $x\in P$. Then $$|x^G|\leq |x^G\cap P|\nu_p(G)$$ with equality if and only if $x$ lies in a unique Sylow $p$-subgroup. In particular, if $x\in P$ is a $p$-element lying in a unique Sylow $p$-subgroup, then $$|x^G|\geq \nu_p(G)|x^P|.$$
\begin{proof}
We know that $x^G=\bigcup_{Q \in \Syl_p(G)}x^G\cap Q$. Thus, we deduce that 

$$|x^G|\leq \sum_{Q \in \Syl_p(G)}|x^G\cap Q|=|x^G\cap P|\nu_p(G),$$
where the last equality holds because each intersection has the same size. Thus, the inequality is proved and equality holds if and only if each element in $x^G$ lies in a unique Sylow $p$-subgroup, or equivalently, if and only if $x$ lies in a unique Sylow $p$-subgroup. Now, the final part follows  from the fact that $x^P\subseteq x^G\cap P$.
\end{proof}
\end{lem}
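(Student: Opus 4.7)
The plan is to use a simple union bound together with the fact that all Sylow $p$-subgroups are conjugate. Start from the observation that every $p$-element of $G$ lies in at least one Sylow $p$-subgroup, so writing $\Syl_p(G) = \{Q_1, \dots, Q_n\}$ with $n = \nu_p(G)$ we have
\[
x^G \;=\; \bigcup_{i=1}^{n} (x^G \cap Q_i),
\]
because $x$ (and hence every $G$-conjugate of $x$) is a $p$-element. The union bound then gives $|x^G| \le \sum_i |x^G \cap Q_i|$.

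The key step is to show that all the summands are equal. If $Q_i = P^{g_i}$ for some $g_i \in G$, then conjugation by $g_i$ maps $x^G \cap P$ bijectively onto $x^G \cap Q_i$, since $x^G$ is stable under conjugation. Hence $|x^G \cap Q_i| = |x^G \cap P|$ for every $i$, and the inequality $|x^G| \le \nu_p(G)\,|x^G \cap P|$ follows immediately.

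Next I analyze the equality case. The union bound becomes an equality precisely when the sets $x^G \cap Q_i$ are pairwise disjoint, i.e.\ no $G$-conjugate of $x$ lies in two distinct Sylow $p$-subgroups. Because $G$ acts transitively by conjugation on $\Syl_p(G)$ and this action preserves the property ``belongs to exactly one Sylow $p$-subgroup'', it suffices that $x$ itself has this property. Thus equality holds if and only if $x$ lies in a unique Sylow $p$-subgroup, as claimed.

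For the final assertion, I use the trivial containment $x^P \subseteq x^G \cap P$ (the $P$-class of $x$ is contained in both $x^G$ and $P$). Combining this with the equality case just established, if $x$ is a $p$-element lying in a unique Sylow $p$-subgroup then
\[
|x^G| \;=\; \nu_p(G)\,|x^G \cap P| \;\ge\; \nu_p(G)\,|x^P|,
\]
which is the desired bound. There is no serious obstacle here; the argument is essentially a double-counting together with the transitivity of conjugation on Sylow subgroups.
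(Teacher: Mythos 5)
Your proof is correct and follows essentially the same route as the paper: the same decomposition $x^G=\bigcup_{Q\in\Syl_p(G)}(x^G\cap Q)$, the union bound with all intersections of equal size by conjugacy of Sylow subgroups, the same disjointness analysis of the equality case, and the final step via $x^P\subseteq x^G\cap P$. The only difference is that you spell out explicitly the bijection $x^G\cap P\to x^G\cap Q_i$ given by conjugation, which the paper leaves implicit.
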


The converse of the last assertion is not true.  The element $x=(1,2,3,4)(5,6,7,8)$ of the group $G=A_{8}$ is a counterexample since $|x^G|=1260=4\cdot 315=|x^P|\nu_2(G)$, but $x$ lies in more than one Sylow $2$-subgroup (we will prove this in Section 4).

We can also deduce the next  necessary condition, which was pointed out in \cite{sch}.

\begin{cor}
Let $G$ be a finite group that has no redundant Sylow $p$-subgroups. Then $|G_p|\geq\nu_p(G)$.
\begin{proof}
By Lemma \ref{equi}, let $x$ be an element of $G$ that belongs to a unique Sylow $p$-subgroup.  By Lemma  \ref{necessary}, we know that $|x^G|\geq \nu_p(G)|x^P|\geq \nu_p(G)$ and hence $|G_p|\geq |x^G|\geq \nu_p(G)$.
\end{proof}
\end{cor}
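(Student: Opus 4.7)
The plan is to combine the two preceding results, Lemma \ref{equi} and Lemma \ref{necessary}, in a very direct way. The hypothesis that $G$ has no redundant Sylow $p$-subgroup, by Lemma \ref{equi}, guarantees the existence of some $x \in G_p$ that lies in a unique Sylow $p$-subgroup $P$ of $G$. This $x$ will be the element through which we produce enough $p$-elements to bound $|G_p|$ from below.

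Next, I would apply the final inequality from Lemma \ref{necessary} to this $x$. Since $x$ lies in a unique Sylow $p$-subgroup, that lemma gives $|x^G| \geq \nu_p(G)\cdot |x^P|$. The factor $|x^P|$ is at least $1$ because $x$ itself is an element of $P$, so we immediately obtain $|x^G| \geq \nu_p(G)$.

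To finish, I would observe that the conjugacy class of a $p$-element consists entirely of $p$-elements, so $x^G \subseteq G_p$. Combining this containment with the previous bound yields $|G_p| \geq |x^G| \geq \nu_p(G)$, which is the claimed inequality. The conceptual content is that a single $p$-element lying in a unique Sylow $p$-subgroup already forces its $G$-conjugacy class to have at least one representative inside each Sylow $p$-subgroup, hence size at least $\nu_p(G)$. There is no real obstacle here; the statement is essentially a one-line corollary of the two preceding lemmas, and the only thing to double-check is that $|x^P| \geq 1$, which is automatic from $x \in P$.
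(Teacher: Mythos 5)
Your proposal is correct and follows essentially the same route as the paper: invoke Lemma \ref{equi} to obtain a $p$-element $x$ lying in a unique Sylow $p$-subgroup, apply the final inequality of Lemma \ref{necessary} to get $|x^G|\geq\nu_p(G)|x^P|\geq\nu_p(G)$, and conclude via $x^G\subseteq G_p$. Your only addition is to spell out explicitly that the conjugacy class of a $p$-element consists of $p$-elements, which the paper leaves implicit.
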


Therefore, if $G$ is a group such that  $|G_p|<\nu_p(G)$, then $G$ has a redundant Sylow $p$-subgroup. This was the basis of the construction in \cite{sch}. We remark that the converse of this result is not true since for $G={\tt SmallGroup}(108,17)$ we have that $G$ possesses a redundant Sylow $2$-subgroup, but $|G_2|=28>27=\nu_2(G)$. The condition  $|G_p|<\nu_p(G)$ seems very strong and we are not aware of any group with non elementary abelian Sylow $p$-subgroups that satisfies this condition.

The next necessary condition was communicated to us by Gabriel Navarro, to whom we thank.

\begin{lem}
\label{nav}
Let $G$ be a finite group. Suppose that $x$ belongs to a unique Sylow $p$-subgroup $P$ of $G$. Then $C_G(x)\subseteq N_G(P)$.
\end{lem}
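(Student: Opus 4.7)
The plan is to exploit the hypothesis directly: the uniqueness of the Sylow $p$-subgroup containing $x$ is preserved under conjugation, and conjugation by any centralizing element fixes $x$. So I would take an arbitrary $g\in C_G(x)$ and show $P^g=P$, which is exactly the statement that $g\in N_G(P)$.

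More precisely, first I would observe that $P^g$ is a Sylow $p$-subgroup of $G$ (conjugates of Sylow subgroups are Sylow subgroups). Next, since $g\in C_G(x)$, we have $x^g=x$, and because $x\in P$ we get $x=x^g\in P^g$. At this point, $P$ and $P^g$ are both Sylow $p$-subgroups of $G$ containing $x$; invoking the hypothesis that $x$ belongs to a unique Sylow $p$-subgroup forces $P^g=P$. Hence $g\in N_G(P)$, and since $g$ was arbitrary in $C_G(x)$, we conclude $C_G(x)\subseteq N_G(P)$.

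I do not expect a genuine obstacle here — the lemma is essentially immediate from the definitions once one recognizes the role of conjugation. The only thing to be careful about is not to confuse ``unique Sylow $p$-subgroup containing $x$'' with ``$x\in Z(P)$'' or with normality of $P$; the argument only uses that any Sylow $p$-subgroup containing $x$ must equal $P$, which is precisely the assumed uniqueness. No appeal to Sylow's theorem beyond the fact that $G$ acts transitively by conjugation on $\Syl_p(G)$ is required.
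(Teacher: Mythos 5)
Your argument is correct and is essentially the same as the paper's: for $g\in C_G(x)$ one notes $x=x^g\in P^g$, so uniqueness of the Sylow $p$-subgroup containing $x$ forces $P^g=P$, i.e.\ $g\in N_G(P)$ (the paper phrases it with $P^{c^{-1}}$, which is the same computation). No gaps.
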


\begin{proof}
Let $c\in C_G(x)$. Then $x^c=x\in P$, so $x\in P^{c^{-1}}$. Since $P$ is the unique Sylow $p$-subgroup that contains $x$, we deduce that $P=P^{c^{-1}}$, so $c\in N_G(P)$, as wanted.
\end{proof}

This will be useful to study redundant Sylow $p$-subgroups in sporadic groups. 
Again, the converse is not true. We can take $G=A_8$ and $x=(1,2,3,4)(5,6,7,8)$.

\section{Solvable groups}

Now, we consider solvable groups. We work toward a proof of Theorem A. The next result proves that the converse of the last assertion of  Lemma \ref{necessary} holds for $p$-nilpotent groups.

\begin{lem}
\label{newtrans}
Let $p$ be a prime. Suppose that $G=PN$ where $P\in\Syl_p(G)$, $N\trianglelefteq G$ and $(|P|,|N|)=1$. If $x \in P$, then $|x^G|\leq\nu_p(G)|x^P|$ with equality if and only if $P$ is the unique Sylow $p$-subgroup containing $x$.
\end{lem}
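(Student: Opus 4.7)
The plan is to reduce the statement to Lemma \ref{necessary} by establishing the key identity $x^G\cap P = x^P$ under the coprime hypothesis. Once this identity is in hand, the inequality $|x^G|\leq |x^G\cap P|\nu_p(G)$ from Lemma \ref{necessary} becomes exactly $|x^G|\leq |x^P|\nu_p(G)$, and the equality characterization (that $x$ lies in a unique Sylow $p$-subgroup) transfers verbatim. Note that the inclusion $x^P\subseteq x^G\cap P$ is trivial, so the whole content of the lemma lies in proving the reverse inclusion.

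For that reverse inclusion, I would take $y\in x^G\cap P$, so $y=x^g$ for some $g\in G$. Since $N\trianglelefteq G$, we have $G=NP$, so we may write $g=np$ with $n\in N$ and $p\in P$. Then $y=(x^n)^p$, hence $x^n = y^{p^{-1}}\in P$. Since $x\in P$ as well, the element $x^{-1}x^n = [x,n]$ lies in $P$. On the other hand, $[x,n]\in N$ because $N$ is normal. Thus $[x,n]\in P\cap N$, and the coprimality hypothesis $(|P|,|N|)=1$ forces $P\cap N=1$, so $[x,n]=1$, i.e.\ $x^n=x$. Consequently $y=x^g = x^{np} = x^p \in x^P$, which proves $x^G\cap P\subseteq x^P$.

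Having established $x^G\cap P = x^P$, the conclusion is immediate from Lemma \ref{necessary}: that lemma gives $|x^G|\leq |x^G\cap P|\nu_p(G)$ with equality if and only if $x$ belongs to a unique Sylow $p$-subgroup of $G$, and substituting $|x^G\cap P|=|x^P|$ yields the claim in the exact form stated.

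There is no real obstacle: the argument is entirely driven by the standard coprime commutator trick $[P,N]\cap P \subseteq P\cap N = 1$. The only subtle point worth flagging is that coprimality is genuinely needed to kill $[x,n]$; without it, $G$-conjugates of $x$ landing in $P$ need not be $P$-conjugate to $x$, and the improvement over Lemma \ref{necessary} fails.
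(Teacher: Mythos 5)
Your proof is correct and follows essentially the same route as the paper: the paper also rests on the identity $x^G\cap P=x^P$ (quoted there from Theorem 5.25 of \cite{isa}) and then performs the same counting over Sylow $p$-subgroups that underlies Lemma \ref{necessary}. Your only deviation is to prove that identity directly via the coprime commutator argument $[x,n]\in P\cap N=1$, which is a valid self-contained substitute for the citation, and then to invoke Lemma \ref{necessary} rather than repeat its count.
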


\begin{proof}
By Theorem 5.25 of \cite{isa}, we know that $x^G\cap P=x^P$ for every $x\in P$. For each $Q \in \Syl_p(G)$ we choose $y_Q\in x^G\cap Q$. Then we have that $y_{Q}^{G}\cap Q=y_{Q}^{Q}$ and that $|y_{Q}^{Q}|=|x^P|$. Thus 
$$x^G=\bigcup_{Q\in  \Syl_p(G)}x^G\cap Q=\bigcup_{Q\in  \Syl_p(G)}y_{Q}^{Q}$$
and hence $|x^G|\leq \sum_{Q\in \Syl_p(G)}|y_{Q}^{Q}|=\nu_p(G)|x^P|$ and the first part follows. Now, equality holds if and only if the union above is disjoint. This happens if and only if  for every $y \in x^G$ we have that $y$ lies in only one Sylow $p$-subgroup and this is if and only if $x$ lies only in one Sylow $p$-subgroup. 
\end{proof}

\begin{cor}
\label{cent}
Let $p$ be a prime. Suppose that $G=PN$ where $1<P\in\Syl_p(G)$, $N\trianglelefteq G$,  $(|P|,|N|)=1$ and $C_N(P)=1$. Then $G$ has a redundant Sylow $p$-subgroup if and only if $C_N(x)>1$ for every $x\in G_p$.
\end{cor}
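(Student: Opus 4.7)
The plan is to combine Lemma~\ref{newtrans} with the structural consequences of the hypothesis $C_N(P)=1$. First I identify $N_G(P)$, and hence $\nu_p(G)$, exactly. By the Frattini argument $G=N\cdot N_G(P)$, so $|N_G(P)|=|P|\cdot |N_N(P)|$. Since $P$ normalizes $N_N(P)$ and $N_N(P)$ normalizes $P$, the commutator $[P,N_N(P)]$ lies in $P\cap N=1$, so $N_N(P)\le C_N(P)=1$. Therefore $N_G(P)=P$ and $\nu_p(G)=[G:P]=|N|$.

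Next I pin down $C_G(x)$ for $x\in P$ inside the semidirect product $G=N\rtimes P$. Writing a generic element uniquely as $h=np$ with $n\in N$ and $p\in P$, a short normal-form calculation for $hxh^{-1}$ (expand $hxh^{-1}=n(pxp^{-1})n^{-1}$, bring the result back into $N\cdot P$ form using normality of $N$, and compare with the unique decomposition $x=1\cdot x$) shows that $h\in C_G(x)$ if and only if $p\in C_P(x)$ and $n\in C_N(x)$. Lemma~\ref{newtrans} now says that $x\in P$ lies in a unique Sylow $p$-subgroup if and only if $|x^G|=\nu_p(G)|x^P|$; writing each side as an index and using $\nu_p(G)=|N|$, this becomes
$$[G:C_G(x)]=|N|\cdot [P:C_P(x)]=[G:C_P(x)],$$
equivalently $|C_G(x)|=|C_P(x)|$. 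Since $C_P(x)\le C_G(x)$, this last equality is equivalent to $C_G(x)=C_P(x)$, which by the centralizer description above is in turn equivalent to $C_N(x)=1$.

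Finally, every $p$-element of $G$ is $G$-conjugate to an element of $P$, and both the property ``$x$ lies in a unique Sylow $p$-subgroup'' and the integer $|C_N(x)|$ are invariant under $G$-conjugation of $x$ (the latter because $N\trianglelefteq G$, so $C_N(x^g)=C_N(x)^g$). Hence the existence of some $x\in G_p$ lying in a unique Sylow $p$-subgroup is equivalent to the existence of some $x\in G_p$ with $C_N(x)=1$; by Lemma~\ref{equi} this amounts to $G$ not having a redundant Sylow $p$-subgroup. Negating both sides yields the stated equivalence. The only mildly delicate step is the normal-form computation in the middle paragraph, which is entirely mechanical once the semidirect product structure is used.
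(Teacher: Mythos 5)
Your argument is correct and follows essentially the same route as the paper's proof: both rest on Lemma \ref{newtrans} together with Lemma \ref{equi}, identify $N_G(P)=P$ (hence $\nu_p(G)=|N|$), and translate the equality $|x^G|=\nu_p(G)|x^P|$ into the condition $C_N(x)=1$. The only differences are that you make explicit the semidirect-product computation $C_G(x)=C_N(x)C_P(x)$ and the reduction of arbitrary $p$-elements to elements of $P$ by conjugation invariance, steps the paper leaves implicit.
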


\begin{proof}
By Lemma \ref{equi} we know that $G$ has a redundant Sylow $p$-subgroup if and only if for every $x\in G_p$, $x$ belongs to more than one Sylow $p$-subgroup of $G$. By Lemma \ref{newtrans}, this occurs if and only if $|x^G|<\nu_p(G)|x^P|$ for every $x\in P$. Since $N_G(P)=P\times C_N(P)=P$, we have that $\nu_p(G)=|N|$, by Sylow's theorems.  
Thus  $|x^G|<\nu_p(G)|x^P|$ for every $x\in P$ if and only if $C_G(x)\supset C_P(x)$ for every $x\in P$, which is equivalent to $C_N(x)>1$ for every $x\in P$. The result follows.
\end{proof}

Now, we can complete the proof of Theorem A. Recall that the Fitting subgroup $F(G)=F_1(G)$ of a finite group $G$ is the largest nilpotent normal subgroup of $G$. We define $F_i(G)/F_{i-1}=F(G/F_{i-1}(G))$ for $i>1$. The group $G$ is solvable if and only if there exists $n$ such that $F_n(G)=G$. The smallest such $n$ is called the Fitting height of $G$.

\begin{thm}
\label{asol}
Let $p$ be a prime and let $P$ be  a   $p$-group of order $p^n$, with  $n>1$, and exponent $p$.  Then there exists a solvable $p'$-group $N$ with Fitting height $n$ such that $P$ acts on $N$, $C_N(P)=1$ and $G=PN$ has a redundant Sylow $p$-subgroup.
\end{thm}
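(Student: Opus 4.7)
By Corollary~\ref{cent}, it suffices to produce a solvable $p'$-group $N$ of Fitting height exactly $n$, admitting a $P$-action with $C_N(P)=1$ and $C_N(x)\neq 1$ for every non-identity $x\in P$; the hypothesis $(|P|,|N|)=1$ then follows automatically. My plan is to build $N$ as a direct product $N=H\times K$ with diagonal $P$-action, where $H$ is an elementary abelian piece that handles the non-fixed-point-free requirement on each individual $x\in P$, while $K$ supplies the required Fitting height.

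For $H$, I use a construction indexed by the maximal subgroups of $P$. Since $P$ has exponent $p$ and order $p^n$ with $n>1$, $P$ is non-cyclic, so every non-identity $x\in P$ is contained in at least one maximal subgroup. By Dirichlet's theorem, pick a prime $\ell\neq p$ with $\ell\equiv 1\pmod{p}$, so that $\mathbb{F}_\ell^\times$ contains an element $\zeta$ of order $p$. For each maximal subgroup $M$ of $P$, let $V_M$ be the one-dimensional $\mathbb{F}_\ell$-space on which $P$ acts via the projection $P\to P/M\cong C_p$ followed by multiplication by $\zeta$, and set $H=\bigoplus_M V_M$. Then $H$ is an elementary abelian $\ell$-group, hence a $p'$-group of Fitting height one; summand by summand one has $C_H(P)=1$ (since each character is non-trivial), while $C_H(x)\supseteq V_M\neq 1$ whenever $x\in M$.

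For $K$, I invoke Turull's construction \cite{tur}: for the $p$-group $P$ of order $p^n$ there exists a solvable $p'$-group $K$ admitting a $P$-action with $C_K(P)=1$ and Fitting height exactly $n$ (the sharpness of the Isaacs--Dade bound on Fitting height). Then $N=H\times K$ with diagonal $P$-action is a solvable $p'$-group of Fitting height $\max\{1,n\}=n$ satisfying $C_N(P)=C_H(P)\times C_K(P)=1$ and $C_N(x)\supseteq C_H(x)\neq 1$ for every non-identity $x\in P$. Forming $G=P\ltimes N$ and applying Corollary~\ref{cent} yields that $G$ has a redundant Sylow $p$-subgroup.

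The main obstacle is the invocation of Turull's deep construction for $K$, which is responsible for realising the extremal Fitting height $n$; the rest of the argument is a formal reduction via Corollary~\ref{cent} together with the elementary abelian modification $H$ that prevents any single non-identity element of $P$ from acting without fixed points.
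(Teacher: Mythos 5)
Your proof is correct, and its skeleton (Turull's theorem from \cite{tur} plus Corollary \ref{cent}) coincides with the paper's; the genuine difference is how the crucial condition $C_N(x)>1$ for all $1\neq x\in P$ is secured. The paper applies Corollary \ref{cent} directly to Turull's group $N$ and gets $C_N(x)>1$ from the solvable case of Thompson's fixed-point-free theorem (Higman; Theorem 6.22 of \cite{isa}): since $P$ has exponent $p$, every nontrivial $x$ has order $p$, and a fixed-point-free action of $\langle x\rangle$ would force $N$ to be nilpotent, contradicting Fitting height $n>1$. You instead bypass Higman's theorem by enlarging Turull's group with the elementary abelian direct factor $H=\bigoplus_M V_M$ indexed by the maximal subgroups of $P$: every nontrivial $x$ lies in some maximal subgroup (as $P$ is non-cyclic), so $C_N(x)\supseteq C_H(x)\neq 1$, while $C_N(P)=C_H(P)\times C_K(P)=1$ and the Fitting height of $H\times K$ stays equal to $n$. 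What your route buys: the fixed-point step is completely elementary and uses only non-cyclicity of $P$ rather than exponent $p$ (exponent $p$ enters your argument only through the statement and the invocation of Turull); in fact your factor $H$ alone, combined with Corollary \ref{cent}, already shows that $P\ltimes H$ has a redundant Sylow $p$-subgroup for an arbitrary non-cyclic $p$-group $P$ (for $P=C_2\times C_2$ this recovers a group of order $108$, consistent with the minimal examples quoted in the Introduction), the Turull factor being needed only to realize Fitting height $n$. What the paper's route buys is brevity: no auxiliary module is needed because Turull's group itself already satisfies the centralizer condition. One small point you should make explicit, though the paper is equally terse about it: Corollary \ref{cent} requires $C_N(x)>1$ for all $x\in G_p$, not just $x\in P$, and this reduction is immediate since every $p$-element of $G$ is conjugate into $P$ and $C_N(x^g)=C_N(x)^g$ because $N\trianglelefteq G$.
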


\begin{proof}
Let $P$ be a $p$-group of exponent $p$ of order $p^n$ with $n>1$. By Theorem B of \cite{tur}, there exists a solvable $p'$-group $N$ of Fitting  height $n$ such that $P$ acts on $N$ with $C_N (P) = 1$. We claim that the group $G=N \rtimes P$ has a redundant Sylow $p$-subgroup. By Corollary \ref{cent}, this is true if and only if $C_N(x)>1$ for every $1\neq x\in G_p$. 
 By the solvable case of Thompson's theorem (which was known to Higman in 1957, see Theorem 6.22 of \cite{isa}), we cannot have $C_N(x)=1$, because $x$ has prime order and $N$ is not nilpotent. The claim follows.
 \end{proof}

\section{Symmetric and alternating groups}

In this section, we prove Theorems B and C. To prove these results we need to introduce an alternative way to see the Sylow subgroups of $S_n$ (for more information see \cite{f}). Let $n=\sum_{i=0}^{k}a_{i}p^{i}$ be the $p$-adic expansion of $n$. We make $a_k$ disjoint subsets of size $p^k$ in $\{1,\ldots,n\}$. Inside each subset of size $p^k$ we make $p$ disjoint subsets of size $p^{k-1}$. We repeat this process till we get sets of size $1$. With the numbers not lying in the sets of size $p^k$ we make $a_{k-1}$ disjoint subsets of size $p^{k-1}$ and inside each of them we make subsets of size $p^i$ for $i\leq k-2$ as before. We repeat this process for each $j=k,\ldots, 0$. Taking all subsets obtained by the previous process we obtain a block structure for $n$. Now, given a block structure $B$ and $\sigma\in S_n$ we will say that $\sigma$ preserves the structure $B$ if $\sigma(b)\in B$ for every $b \in B$ and we will write that $\sigma(B)=B$. We have the following result, which follows from the results in \cite{f}.

\begin{thm}\label{SylowBlock}
	Let $n$ be an integer and let $B$ be a block structure of $n$. If we set $P=\{\sigma \in (S_n)_p|\sigma(B)=B\}$, then $P\in \Syl_p(S_n)$. In addition, each Sylow $p$-subgroup of $S_n$ can be associated with a unique block structure.
\end{thm}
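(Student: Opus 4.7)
The plan is to realize the stabilizer of $B$ in $S_n$ as an explicit iterated wreath product, identify $P$ as its unique Sylow $p$-subgroup, and then deduce the bijection between Sylow $p$-subgroups and block structures from a normalizer computation.

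First I would analyze $H := \{\sigma \in S_n : \sigma(B) = B\}$. A permutation in $H$ must send each block of $B$ to a block of the same size, and since any two blocks of $B$ are either disjoint or nested, the containment relation among blocks is preserved for cardinality reasons. Hence $H$ decomposes as a direct product over the levels $i = 0,1,\ldots,k$:
$$H \;\cong\; \prod_{i=0}^{k}\Bigl(\underbrace{S_p \wr \cdots \wr S_p}_{i \text{ copies}}\Bigr) \wr S_{a_i},$$
where $S_{a_i}$ permutes the $a_i$ top-level blocks of size $p^i$ and the inner iterated wreath product acts on the nested sub-blocks of a single such block. Since $0 \leq a_i \leq p-1$, each $S_{a_i}$ is a $p'$-group, so every $p$-element of $H$ projects trivially to each $S_{a_i}$ and lies in the base group; iterating inside each size-$p^i$ block, a $p$-element is forced at every wreath level into the unique Sylow $p$-subgroup $C_p$ of the corresponding $S_p$. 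Consequently,
$$P \;=\; \prod_{i=0}^{k}\Bigl(\underbrace{C_p \wr \cdots \wr C_p}_{i \text{ copies}}\Bigr)^{a_i}$$
is itself a $p$-subgroup of $S_n$, and a direct count (or Legendre's formula) gives $|P| = p^{(n-\sum_i a_i)/(p-1)} = |S_n|_p$, so $P \in \Syl_p(S_n)$.

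For the bijection part, $S_n$ acts transitively by conjugation on both $\Syl_p(S_n)$ (by Sylow's theorem) and the set of block structures of the prescribed shape (by inspection), and the map $B \mapsto P$ is manifestly $S_n$-equivariant. Surjectivity is therefore automatic, and injectivity reduces to the equality $N_{S_n}(P) = H$. The inclusion $H \subseteq N_{S_n}(P)$ is immediate, since $P$ is the unique Sylow $p$-subgroup of $H$ (the index $[H:P] = \prod_i a_i! \cdot ((p-1)!)^{(n-\sum_i a_i)/(p-1)}$ is coprime to $p$ while $P \triangleleft H$ by construction) and hence characteristic in $H$. The reverse inclusion is the main obstacle; one handles it either by matching orders using the classical count of $\nu_p(S_n)$, or by recovering the blocks of $B$ intrinsically from $P$ via a level-by-level analysis of its characteristic subgroups. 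Both routes are essentially the substance of the results in \cite{f} from which the theorem is cited.
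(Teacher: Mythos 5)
Your decomposition of the stabilizer $H=\{\sigma\in S_n:\sigma(B)=B\}$ into a direct product of groups $W_i\wr S_{a_i}$, with $W_i$ the $i$-fold iterated wreath product of $S_p$, is correct, as is the remark that a $p$-element projects trivially onto the top factors $S_{a_i}$ because $a_i\le p-1$. The gap is the next sentence: inside a block of size $p^i$ the wreath factors are copies of $S_p$, which is \emph{not} a $p'$-group, so a $p$-element of $W_i$ is not ``forced at every wreath level into the unique Sylow $p$-subgroup $C_p$ of the corresponding $S_p$''; indeed for $p\ge 5$ the group $S_p$ has $(p-2)!>1$ Sylow $p$-subgroups, and already for $p=3$ the elements of $3$-power order in $W_2=S_3\wr S_3$ do not all lie in a single copy of $C_3\wr C_3$. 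Consequently $P$, which by definition is the set of \emph{all} $p$-elements preserving $B$, need not be a subgroup at all, and your subsequent claims ``$P\triangleleft H$ by construction'' and $N_{S_n}(P)=H$ collapse with it. Concretely, for $n=p\ge 5$ the only block structure consists of $\{1,\dots,p\}$ and the singletons, so every permutation preserves it and $P$ is the set of all $p$-elements of $S_p$, of size $1+(p-1)!>p$; for $p=3$, $n=9$ the stabilizer is $S_3\wr S_3$, which contains $243>81=|S_9|_3$ elements of $3$-power order and more than one Sylow $3$-subgroup of $S_9$.

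Note that these examples also show the difficulty is not only in your argument: the first assertion of the statement (and the bijection with block structures, since for $n=p\ge5$ there is one structure but $(p-2)!$ Sylow $p$-subgroups) fails for odd primes as literally formulated, which is presumably why the paper offers no proof and simply refers to \cite{f}; any correct treatment for odd $p$ must replace the statement by a weaker association (for instance, that each Sylow $p$-subgroup preserves a unique block structure). For $p=2$ your argument does go through and is the natural one: every wreath factor is $S_2=C_2$, so $H$ is itself a $2$-group of order $2^{\nu_2(n!)}$, hence $P=H\in\Syl_2(S_n)$, and the bijection follows by equivariance together with self-normalization of Sylow $2$-subgroups, as you indicate. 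Two further cautions: your injectivity step explicitly defers ``the main obstacle'' to the results of \cite{f}, which is circular in a proof of a theorem attributed to \cite{f} (for $p=2$ one can instead recover the blocks of $B$ from $P$ by an elementary induction on block sizes), and in any repaired version the odd-prime case must be treated separately rather than by the uniform wreath-level argument you propose.
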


We begin by proving that if $p$ is a prime, then $S_n$ does not possess a redundant Sylow $p$-subgroup. As a consequence, we will have that $A_n$ does not possess any redundant Sylow $p$-subgroup for $p$ odd. This proof can be found in \cite[Theorem 5.1]{hv1}, but we include it here for completeness.
Given $x\in S_{n}$ we know that $x$ can be expressed as the product of disjoint cycles and hence we can associate a partition of $n$ to $x$, which will be called the type of $x$.

\begin{thm}\label{PSym}
	Let $p$ be a prime and let $n\geq 2$. Then there exists $x \in (S_n)_p$ lying  in a unique Sylow $p$-subgroup of $S_n$.
	\begin{proof}
		Let $n=\sum_{i=0}^{k}a_{i}p^{i}$ be the $p$-adic expansion of $n$. We can write $\sum_{i=0}^{k}a_{i}p^{i}$ as $\sum_{i=1}^{t}p^{n_i}$, where $0\leq n_1\leq n_2\leq \ldots\leq n_t=k$. We choose $x\in S_{n}$ an element whose cycle structure is $(p^{n_t}, p^{n_{t-1}},\ldots,p^{n_1})$. We use induction on $n$ to prove that this permutation can only preserve one block structure of $n$.
		
		Assume first that $n=p^k$. Let $y=x^p$. In this case, $x$ is a cycle of length $p^k$ and $y$ is a product of $p$ disjoint cycles of length $p^{k-1}$. Let $B$ and $\tilde{B}$ be block structures of $n$ preserved by $x$. We know that $B$ possesses $p$ different blocks, say $b_1,\ldots, b_{p}$,  of size $p^{k-1}$  such that $x$ permutes the $b_i$ cyclically, $y(b_i)=b_i$ and $b_i=\{a,y(a),\ldots, y^{p^{k-1}-1}(a)\}$ for all $a\in b_i$. Analogously, $\tilde{B}$ possesses blocks $\tilde{b}_1,\ldots, \tilde{b}_{p}$ with the same properties. For every $j \in \{1,\ldots,p\}$ we have that there exists some $i_j$ such that $b_{i_j} \cap \tilde{b}_j\not=\varnothing $. Thus, there exists $a \in b_{i_j} \cap \tilde{b}_j$ and hence  $b_{i_j}=\{a,y(a),\ldots, y^{p^{k-1}-1}(a)\}=\tilde{b}_j$. Now, we have that the blocks of $B$ and $\tilde{B}$ lying under $\tilde{b}_j$(=$b_{i_j}$) are two block structures of $p^{k-1}$ preserved by a cycle of length $p^{k-1}$ of  $y$. Therefore, the inductive hypothesis implies that the blocks of $B$ and $\tilde{B}$ lying under $b_{i_j}$ and $\tilde{b}_j$ must coincide. Thus $B=\tilde{B}$.

		Assume now that $n \not=p^k$. Let $B$ and $\tilde{B}$ be block structures of $n$ preserved by $x$. We know that $B$ possesses $a_k$ different blocks, say $b_1,\ldots, b_{a_k}$,  of size $p^k$  such that for all $a\in b_{i}$ we have that $b_i=\{a,x(a),\ldots, x^{p^k-1}(a)\}$. Analogously, $\tilde{B}$ possesses blocks $\tilde{b}_1,\ldots, \tilde{b}_{a_k}$ with the same properties. Now, we know that 
		
		$$|b_1|+\ldots +|b_{a_k}|+|\tilde{b}_1|+\ldots +|\tilde{b}_{a_k}|=2a_kp^k\geq p^k(a_k+1)>n.$$
		Therefore, there exists $a\in b_i\cap \tilde{b}_j$ for some $i$ and $j$ and hence $b_i=\{a,x(a),\ldots, x^{p^k}(a)\}=\tilde{b}_j$. There is no loss to assume that $b_1=\tilde{b}_1=\{1,\ldots,p^k\}$.  Now, we have that $x=cy$, where $c$ is a cycle of length $p^k$ on  $\{1,\ldots, p^k\}$ and $y$ is a permutation whose cycle structure is $( p^{n_{t-1}},\ldots,p^{n_1})$ and is disjoint to $\{1,\ldots, p^k \}$. Now, since $p^k<n$, we may apply the inductive hypothesis to $c$ to deduce that $c$ can only fix a cycle structure on $p^k$, which implies that the blocks of $B$ and $\tilde{B}$ contained in $\{1,\ldots, p^k\}$ must coincide. Analogously,  applying the inductive hypothesis to $y$, we have that the blocks of $B$ and $\tilde{B}$ not contained in $\{1,\ldots, p^k\}$ must coincide. Thus $B=\tilde{B}$ and the result follows.

		Thus, $x$ can preserve only one block structure of $n$ and hence,  by Theorem \ref{SylowBlock}, we have that $x$ lies in only one Sylow $p$-subgroup of $S_{n}$.
	\end{proof}
\end{thm}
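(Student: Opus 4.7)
By Theorem \ref{SylowBlock}, the Sylow $p$-subgroups of $S_n$ correspond bijectively to block structures on $\{1,\ldots,n\}$, each $P\in\Syl_p(S_n)$ being the set of $p$-elements preserving the associated structure. Thus it is enough to exhibit a $p$-element $x$ that preserves a unique block structure. Writing the $p$-adic expansion $n=\sum_{i=0}^{k}a_i p^{i}$ as $n=p^{n_1}+\cdots+p^{n_t}$ with $0\le n_1\le\cdots\le n_t=k$, I take $x\in S_n$ to be a product of disjoint cycles of lengths $p^{n_1},\ldots,p^{n_t}$ whose supports partition $\{1,\ldots,n\}$. I then argue by strong induction on $n$, the cases $n<p$ being trivial since the only $p$-element is the identity and the trivial block structure is the only one available.

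For the inductive step, first suppose $n\neq p^k$. Let $B$ be any block structure preserved by $x$. Its top-level blocks have size $p^k$, and each such block is $x$-invariant, hence a union of complete cycles of $x$. The total length of the cycles of $x$ shorter than $p^k$ equals $n-a_k p^k<p^k$, so every top-level block must contain at least one $p^k$-cycle of $x$ and therefore coincides with its support. This pins down the top-level decomposition. Induction applied to the restriction of $x$ to each such top-level block (handled by the other case below) and to the complement (whose cycle structure is of the same shape but smaller) forces uniqueness at all deeper levels.

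The harder case is $n=p^k$, in which $x$ is a single $p^k$-cycle. Here any preserved block structure has $p$ top-level blocks of size $p^{k-1}$, permuted cyclically by $x$. Each such block is then $\langle x\rangle$-invariant, and in particular $y$-invariant of size $p^{k-1}$, where $y:=x^p$ is a product of $p$ disjoint $p^{k-1}$-cycles. Since a $y$-invariant set of size $p^{k-1}$ is necessarily a union of $y$-orbits, and each $y$-orbit already has size $p^{k-1}$, the top-level blocks must be exactly the $y$-orbits and are therefore uniquely determined. Applying the inductive hypothesis to the action of $y$ (a $p^{k-1}$-cycle) on each of these blocks uniquely determines the deeper block structure. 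I expect this case to be the main obstacle: there is no cycle-length mismatch to exploit, and the key insight is that $\langle x\rangle$-invariance of the candidate blocks is strong enough to identify them with the orbits of a suitable power of $x$.
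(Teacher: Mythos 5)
Your proposal is correct and follows essentially the same route as the paper: the same choice of $x$ (cycle lengths given by the $p$-adic expansion), induction on $n$, the case $n=p^k$ handled by identifying the size-$p^{k-1}$ blocks with the orbits of $y=x^p$, and the case $n\neq p^k$ by forcing the size-$p^k$ blocks to be the supports of the $p^k$-cycles and then inducting on the pieces (you peel off all $p^k$-cycles at once where the paper peels one, a cosmetic difference). One wording slip: in the case $n=p^k$ the size-$p^{k-1}$ blocks are \emph{not} $\langle x\rangle$-invariant (as you yourself note, $x$ permutes them cyclically); the claim you actually need and use is invariance under $y=x^p$, which does follow from the cyclic permutation.
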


Now, it only remains to study the existence of a redundant Sylow $2$-subgroup in $A_n$. We begin by reducing the problem to $S_n$.

\begin{thm}
	Let $n\geq 6$, let $P\in \Syl_2(S_{n})$ and let $T\in \Syl_2(A_n)$. Then $N_{S_{n}}(P)=P$ and $N_{A_n}(T)=T$. In particular, $\nu_2(S_{n})=\nu_2(A_n)$ and the map $\varphi:\Syl_2(S_{n})\rightarrow\Syl_2(A_n)$  defined by $\varphi(P)=P\cap A_n$ is a bijection.
	\begin{proof}
		The assertion on the normalizers can be found in \cite[Lemma 4]{cf} and \cite[Theorem 2]{kon}. It follows trivially that $\nu_2(S_{n})=\nu_2(A_n)$. Now, we know that the map $\varphi$ is surjective and since $|\Syl_2(S_{n})|=|\Syl_2(A_n)|$, we have that it is a bijection.
	\end{proof}
\end{thm}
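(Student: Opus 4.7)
The plan is to isolate the two normalizer identities as the main content of the theorem and derive everything else by elementary counting. For the identities $N_{S_n}(P) = P$ and $N_{A_n}(T) = T$ I would rely directly on the cited results in \cite{cf} and \cite{kon}; these are classical and I would not attempt to reprove them from scratch, and under the hypothesis $n\geq 6$ they apply verbatim.

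Granted the normalizer facts, the equality $\nu_2(S_n) = \nu_2(A_n)$ follows from the orbit--stabilizer theorem: one has $\nu_2(S_n) = [S_n : N_{S_n}(P)] = n!/|P|$ and $\nu_2(A_n) = [A_n:N_{A_n}(T)] = (n!/2)/|T|$, and since $|S_n|_2 = 2|A_n|_2$ we have $|T| = |P|/2$, so both counts coincide.

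For the map $\varphi$, I first check that it is well-defined. A Sylow $2$-subgroup $P$ of $S_n$ has order $|S_n|_2 = 2|A_n|_2$, hence $P\not\leq A_n$ for $n\geq 2$ (otherwise $|P|$ would divide $|A_n|_2$); therefore $P\cap A_n$, being the kernel of the sign map restricted to $P$, has index two in $P$ and order $|A_n|_2$, making it a Sylow $2$-subgroup of $A_n$. Surjectivity is then standard: given $T\in\Syl_2(A_n)$, the $2$-group $T$ is contained in some $P\in\Syl_2(S_n)$, so $T\subseteq P\cap A_n = \varphi(P)$, and equality of orders forces $T = \varphi(P)$.

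Since $\varphi$ is a surjection between two finite sets of the same cardinality, it is automatically a bijection. No step here is deep once the normalizer theorems are granted; the only piece that would demand care is verifying that the hypothesis $n\geq 6$ is indeed the correct threshold for both cited normalizer results (small $n$ genuinely behave differently). That citation-check is the principal, but quite mild, obstacle.
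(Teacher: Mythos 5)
Your proposal is correct and follows essentially the same route as the paper: the normalizer identities are taken from the cited results of Carter--Fong and Kondrat'ev, the equality $\nu_2(S_n)=\nu_2(A_n)$ is obtained by counting via Sylow/orbit--stabilizer, and the bijectivity of $\varphi$ comes from surjectivity between finite sets of equal size. You merely spell out the well-definedness and surjectivity details that the paper treats as immediate, which is fine.
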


As a consequence, an element $x \in (A_n)_2$ lies in a unique Sylow $2$-subgroup of $A_n$ if and only if $x$  lies in a unique Sylow $2$-subgroup of $S_n$. Thus, once Theorem C is proved we will deduce Theorem B simply studying whether  the permutations of Theorem C are even or odd.

Let $n=\sum_{i=1}^{t}2^{n_i}$ with $0\leq n_1<n_2 < \ldots <n_t$ be the $2$-adic expansion of $n$. We define $x(n)$ as the partition of $n$ given by  $(2^{n_t},\ldots,2^{n_1})$. We say that $x(n)$ is even if $t$ is even and $n_{1} \geq 1$ or if $t$ is odd and $n_{1} = 0$. Otherwise $x(n)$ is called odd. Note that $x(n)$ is the partition that we considered in the proof of Theorem \ref{PSym} for $p=2$.   If $n$ is even and $n-2=\sum_{i=1}^{\ell}2^{m_i}$  with $0< m_1<m_2 < \ldots <m_{\ell}$, then  we define $y(n)$ as the partition given by $(2^{m_{\ell}},\ldots,2^{m_1},1,1)$. We say that $y(n)$ is even if $\ell$ is even and we say $y(n)$ is odd if $\ell$ is odd. We define $T(n)$ as the set of partitions  $\{x(n),y(n)\}$ if $n$ is even and $\{x(n)\}$ if $n$ is odd.

We observe that the type of $x\in (S_{n})_2$ lies in $T(n)$ if and only if $x$ fixes at most $2$ points and all  cycles in the decomposition of $x$ have different sizes. The following lemma determines whether the permutations in $T(n)$ are even or odd and hence it will provide the conditions of Theorem B.

\begin{lem}\label{num}
	Let $n$ be an integer and let $n=\sum_{i=r}^{k}a_i2^i$, where $a_r,a_{r+1},\ldots a_k \in \{0,1\}$, $1=a_r=a_k$. 
	\begin{itemize}
		\item[i)] Assume that $n$ is odd. Then an element of type $x(n)$ is an odd permutation if and only if $\sum_{i=1}^{k}a_i\equiv 1 \pmod{2}$.

		\item[ii)] Assume that $n$ is even. Then two elements of types respectively $x(n)$ and $y(n)$ are both odd permutations if and only if $\sum_{i=r}^{k}a_i\equiv 1 \pmod{2}$ and $r\geq 2$ is even.
	\end{itemize}
	
	\begin{proof}
		i) Let $n$ be odd. In this case $r=0$, $a_{0} = 1$, and $n_{1} = 0$. The partition $x(n)$ is odd if and only if $t$ is even. This is equivalent to saying that $\sum_{i=r}^{k}a_i$ is even, that is, $\sum_{i=1}^{k}a_i\equiv 1 \pmod{2}$.  
		
		ii) Let $n$ be even. In this case $n-2$ is also even and $n_{1} \geq 1$. The partitions $x(n)$ and $y(n)$ are both odd if and only if both $t$ and $\ell$ are odd. Since $n-2=\sum_{i=2}^{t}2^{n_i}+\sum_{j=1}^{r-1}2^{j}$ we deduce that $\ell=(t-1)+(r-1)=t+r-2$. Therefore, we deduce that $t$ and $\ell$ are both odd if and only if $t=\sum_{i=r}^{k}a_i$ is odd and $r$ is even.		  
	\end{proof}
\end{lem}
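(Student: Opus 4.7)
The plan is to compute the sign of each permutation directly from its cycle type. Recall that a cycle of length $\ell$ contributes sign $(-1)^{\ell-1}$, so a cycle of length $2^m$ is even when $m=0$ (a fixed point) and odd when $m\geq 1$. For a permutation all of whose cycle lengths are powers of $2$, the sign therefore equals $(-1)^c$, where $c$ is the number of non-trivial cycles. My entire argument will reduce to counting these cycles in $x(n)$ and $y(n)$ and comparing the parity of $c$ to the combinatorial conditions in the statement.

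For part (i), with $n$ odd I have $r=0$, $a_0=1$, and consequently $n_1=0$. Thus $x(n)$ consists of one fixed point together with $t-1$ non-trivial cycles, so its sign is $(-1)^{t-1}$. Since $t=1+\sum_{i=1}^k a_i$, the permutation is odd iff $t$ is even iff $\sum_{i=1}^k a_i\equiv 1\pmod 2$, as claimed.

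For part (ii), with $n$ even I have $r\geq 1$ and $n_1\geq 1$, so every cycle of $x(n)$ has even length and its sign is $(-1)^t$; meanwhile $y(n)$ has $\ell$ cycles of even length plus two fixed points, hence sign $(-1)^\ell$. Both are odd precisely when $t$ and $\ell$ are both odd. The remaining task is to express $\ell$ in terms of $(t,r)$. Since $a_r=1$ is the lowest nonzero bit of $n$ and $r\geq 1$, I would use the identity $2^r-2=\sum_{j=1}^{r-1}2^j$ to write
\[
  n-2=\sum_{i=r+1}^{k}a_i\,2^i+\sum_{j=1}^{r-1}2^j,
\]
from which one reads off that the binary expansion of $n-2$ has exactly $(t-1)+(r-1)$ ones, i.e.\ $\ell=t+r-2$. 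Therefore $t$ and $\ell$ are simultaneously odd iff $t$ is odd and $r$ is even; combined with $r\neq 0$ (forced by $n$ even) this is exactly the stated condition $r\geq 2$ even and $\sum_{i=r}^k a_i\equiv 1\pmod 2$.

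The only mildly delicate point is justifying the binary expansion of $n-2$ when $r\geq 2$, since it requires a borrow. However, the displayed identity handles both the $r=1$ case (empty second sum) and the $r\geq 2$ case uniformly, so no real obstacle arises; everything else is bookkeeping with parities.
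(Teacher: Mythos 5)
Your proof is correct and follows essentially the same route as the paper: the only difference is that you recompute the signs of $x(n)$ and $y(n)$ explicitly from the cycle lengths, whereas the paper has already folded that sign computation into its definitions of when the partitions $x(n)$ and $y(n)$ are even or odd. The key step in both arguments is identical, namely reading off $\ell=(t-1)+(r-1)$ from the binary expansion of $n-2$ via the borrow identity $2^{r}-2=\sum_{j=1}^{r-1}2^{j}$ and then comparing parities.
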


Now, we restate and  prove Theorem C.

\begin{thm}
	Let $n\geq 2$ and let $x\in (S_n)_2$ written as a product of disjoint cycles. Then $x$ lies in a unique Sylow $2$-subgroup of $S_n$ if and only if $x$ has at most two fixed points and all cycles of $x$ of length bigger than one have different lengths.
	\begin{proof}
		Let $x$ be a $2$-element of $S_n$ lying in a unique Sylow $2$-subgroup of $S_n$.
		
		Assume first that $x$ fixes at least $3$ points. Suppose that $n$ is odd and  that $x$ fixes the points $1,2$ and $3$ and that $x$ preserves a block structure of $n$, say $B$, such that $1$ does not lie in any block of size larger than $1$ and $\{2,3\}\in B$. Thus, if we take  another block structure, $\overline{B}$, which just permutes (non-trivially) $1,2$ and $3$, then $x$ preserves $B$ and $\overline{B}$ and hence, by Theorem \ref{SylowBlock}, $x$ lies in more than one Sylow $2$-subgroup of $S_n$, which is impossible. Assume now that $n$ is even and that $x$ fixes $4$ points. Therefore,  we can repeat the process by permuting four labels of a block structure preserved by $x$. Thus, $x$ permutes at least $2$ block structures on $n$, which is again a contradiction.
		
		Now, suppose  that $x$ possesses $2$ cycles of the same length and let $k\geq 1$ be the largest integer such that $x$ contains $2$ cycles of length $2^k$. Then $x$ preserves a block structure $B$ with $B_0,B_1,B_2,B_{11},B_{12},B_{21},B_{22}\in B$ such that $$B_0=B_1\cup B_2, B_i=B_{i1}\cup B_{i2}, x(B_{i1})=B_{i2}$$ and 
		$$2^{k-1}=|B_{ij}|=\frac{|B_i|}{2}=\frac{|B_0|}{4}.$$
		for $i,j=1,2$.
		 Thus, we can make another block structure $\overline{B}$ simply replacing the blocks inside $B_0$ by $$\overline{B_{1}}=B_{11}\cup B_{21}\text{ and\,\,} \overline{B_{2}}=B_{12}\cup B_{22}.$$ Therefore, $x$ preserves two block structures  on $n$ and hence, by Theorem \ref{SylowBlock},  lies in more than one Sylow $2$-subgroup, which is impossible.

		Now, let $x$ be a permutation whose type lies in $T(n)$.

		If the type of $x$ is $x(n)$, then  the argument of the proof of Theorem \ref{PSym} shows that $x$ lies in a unique Sylow $2$-subgroup. Assume now that $n$ is even, $$n-2=\sum_{i=1}^{\ell}2^{m_i}$$  with $0< m_1<m_2\ldots <m_{\ell}$ and the type of $x$ is $y(n)=(2^{m_{\ell}},\ldots, 2^{m_1},1,1)$ (note that  $2^{m_{\ell}}$ is the largest power of $2$ smaller than $n$). We use  induction on $n$ to prove  that $y(n)$ preserves a unique block structure of $n$. We will distinguish two cases: the case $n<2^{m_{\ell}+1}$ and $n=2^{m_{\ell}+1}$. 
		
		Assume first that $n<2^{m_{\ell}+1}$. In this case, each block structure of $n$ possesses a unique block of size $2^{m_{\ell}}$. Thus, if $B$ and $\tilde{B}$ are block structures preserved by $x$, then there exists $b\in B$ and $\tilde{b}\in B$ such that $|b|=2^{m_{\ell}}=|\tilde{b}|$. It follows that $x(b)=b$ and hence  $\{a,x(a),\ldots , x^{2^{m_{\ell}}-1}(a)\}=b$ for all $a\in b$, and a similar property holds for $\tilde{b}$. Now, we have that $n-2^{m_{\ell}}<2^{m_{\ell}}$ and hence there exists $a \in b\cap \tilde{b}$. Thus, we have that $b=\{a,x(a),\ldots , x^{2^{m_{\ell}}-1}(a)\}=\tilde{b}$.  As in the proof of Theorem \ref{PSym}, we may assume that $b=\tilde{b}=\{1,\ldots,2^{m_{\ell}}\}$. Now, we have that $x=cy$, where $c$ is a cycle of length $2^{m_{\ell}}$ on $\{1,\ldots, 2^{m_{\ell}}\}$ and $y$ is a permutation whose cycle structure is $y(n-2^{m_{\ell}})=( 2^{m_{\ell-1}},\ldots,2^{m_1},1,1)$ and is disjoint to $\{1,\ldots, 2^{m_{\ell}} \}$. Thus, applying the inductive hypothesis to $y$, we have that the blocks of $B$ and $\tilde{B}$ not contained in $\{1,\ldots,2^{m_{\ell}}\}$ must coincide. Moreover, the  blocks of $B$ and $\tilde{B}$ contained in $\{1,\ldots, 2^{m_{\ell}}\}$ must coincide since the type of $c$ is $x(2^{m_{\ell}})$.  Thus, the result follows in this case.

		Assume now that $n=2^{m_{\ell}+1}$. Let $B$ be a block structure on $n$. Then there exist $b_0,b_1,b_2\in B$ such that $|b_1|=|b_2|=2^{m_{\ell}}$,  $b_0=b_1\cup b_2$ and $b_0\neq b_1$. If $x$ preserves the structure $B$, then $x(b_1)=b_1$, $x(b_2)=b_2$ and $b_1=\{a,x(a),\ldots , x^{2^{m_{\ell}}-1}(a)\}$  for all $a\in b_1$. If $\tilde{B}$ is another block structure preserved by $x$, then there exist $\tilde{b}_0,\tilde{b}_1,\tilde{b}_2\in \tilde{B}$ satisfying the same conditions as above. 
		
		If $b_1 \cap \tilde{b}_1=\varnothing$, then $b_1=\tilde{b}_2$. Thus, if $a\in b_1$ we have that $\{a,x(a),\ldots , x^{2^{m_{\ell}}-1}(a)\}=b_1=\tilde{b}_2$ and hence both $\tilde{b}_1$ and $\tilde{b}_2$ contain a cycle of length $2^{m_{\ell}}$ of $x$, which is impossible. Hence, $b_1 \cap \tilde{b}_1\not =\varnothing$ and we can finish as in the case $n<2^{m_{\ell}+1}$.
		
		Thus, $x$ preserves a unique block structure of $n$ and therefore, by Theorem \ref{SylowBlock}, we have that $x$ lies in a unique Sylow $2$-subgroup of $S_n$.
	\end{proof}
\end{thm}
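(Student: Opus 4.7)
My plan is to translate the question, via Theorem \ref{SylowBlock}, into a statement about block structures: a $2$-element $x\in S_n$ lies in a unique Sylow $2$-subgroup if and only if $x$ preserves a unique block structure on $\{1,\ldots,n\}$. With this reformulation, the proof splits cleanly into necessity and sufficiency, with a short recognition that the hypotheses on $x$ are exactly the condition ``cycle type of $x$ lies in $T(n)$'' introduced before the theorem.

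For necessity I would argue contrapositively by producing, in each bad case, two distinct block structures preserved by $x$. First, if $x$ has at least three fixed points $i_1,i_2,i_3$, I would start from any preserved block structure $B$ and use the freedom in how the singletons $i_1,i_2,i_3$ are paired at the lowest level of the hierarchy: if $n$ is odd there is a unique singleton left outside a $2$-block and I can swap which of $i_1,i_2,i_3$ plays that role; if $n$ is even I have at least four fixed points and can repair the two size-$2$ blocks containing them. Second, if $x$ has two cycles of equal length $>1$, let $k\ge 1$ be maximal such that $x$ has two cycles of length $2^k$. Inside any preserved $B$ there must exist a block $B_0$ of size $2^{k+1}$ splitting as $B_1\cup B_2$, with each $B_i=B_{i1}\cup B_{i2}$ of size $2^k$ permuted cyclically by $x$. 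Replacing the pair $\{B_1,B_2\}$ by $\{B_{11}\cup B_{21},\, B_{12}\cup B_{22}\}$ yields a second block structure also preserved by $x$, just as in the displayed construction inside the excerpt.

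For sufficiency, I would argue by induction on $n$ that if the cycle type of $x$ lies in $T(n)$ then $x$ preserves exactly one block structure. The case of type $x(n)$ is essentially the $p=2$ instance of Theorem \ref{PSym}, so I would invoke that argument directly. The remaining case is type $y(n)=(2^{m_\ell},\ldots,2^{m_1},1,1)$ for $n$ even; here $2^{m_\ell}$ is the largest cycle length, and I would isolate in any two preserved structures $B,\tilde B$ their top-level blocks of size $2^{m_\ell}$. If $n<2^{m_\ell+1}$, only one such block exists in each structure, and the counting inequality $n-2^{m_\ell}<2^{m_\ell}$ forces those blocks to intersect and hence to coincide with the support of the unique $2^{m_\ell}$-cycle of $x$; applying the inductive hypothesis to the complementary permutation of type $y(n-2^{m_\ell})$ finishes this subcase. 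If instead $n=2^{m_\ell+1}$, the single top block of size $n$ splits into two halves of size $2^{m_\ell}$ in each structure, and I would show that if $b_1\cap\tilde b_1=\varnothing$ then both halves of $\tilde B$ would contain a full $x$-cycle of length $2^{m_\ell}$, contradicting uniqueness of that cycle in $x$; this reduces the situation to the previous subcase. The main obstacle is precisely this last subcase: the freedom in splitting the unique top block of size $n=2^{m_\ell+1}$ into halves is genuine in general, and it is the assumption that $x$ has exactly two fixed points and only one cycle of length $2^{m_\ell}$ that kills the wrong split.
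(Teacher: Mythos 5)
Your proposal follows the paper's proof essentially step for step: the same translation via Theorem \ref{SylowBlock} into uniqueness of a preserved block structure, the same two constructions for necessity (re-arranging fixed points, and re-pairing the half-blocks attached to two cycles of equal length $2^k$ with $k$ maximal), and the same induction for sufficiency with the case split $n<2^{m_\ell+1}$ versus $n=2^{m_\ell+1}$, including the disjointness argument for the two top halves. So in structure and in all key ideas it matches the paper.

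One step, as you state it, is false and should be corrected: in the repeated-cycle-length case you claim that \emph{inside any preserved} $B$ there is a block $B_0$ of size $2^{k+1}$ whose two halves $B_1,B_2$ are $x$-invariant with their sub-halves swapped by $x$. Take $x=(1\,2\,3\,4)(5\,6)(7\,8)$ in $S_8$ (so $k=1$) and the block structure with $4$-blocks $\{1,2,3,4\}$, $\{5,6,7,8\}$ and $2$-blocks $\{1,3\},\{2,4\},\{5,7\},\{6,8\}$: $x$ preserves this structure, yet no $4$-block has $x$-invariant halves, so the configuration you require is absent. What the argument actually needs (and what the paper asserts) is only the \emph{existence} of some preserved structure with that configuration; this is true, but it requires a short packing argument: treat the two chosen $2^k$-cycles as a single item of size $2^{k+1}$ and pack all cycle supports into the block shape determined by the binary expansion of $n$ (possible since the sizes are powers of two summing correctly), then let the two cycles be the halves of the resulting $2^{k+1}$-block. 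With that existential version your swap construction goes through exactly as in the paper. A similar but harmless remark applies to the fixed-point case: your ``freedom to re-pair'' in an arbitrary preserved $B$ is justified because any $2$-block containing a fixed point of $x$ must consist of two fixed points (and, for $n$ odd, the unique point outside all $2$-blocks is necessarily fixed), so with three or four fixed points the required swap is always available; it is worth saying this explicitly.
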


Theorem B now follows from Theorem C and Lemma \ref{num}.

\section{General linear groups}

In this section, we prove a more precise version of Theorem E. 

\begin{thm}\label{GL}
Let $p$ be an odd prime and let $q$ be a prime power such that $p$ divides $q-1$ but $p^2$ does not divide $q-1$. If $1<n<p$, then $\GL(n,q)$ does not possess a redundant Sylow $p$-subgroup and $\GL(p,q)$ possesses a redundant Sylow $p$-subgroup.
\end{thm}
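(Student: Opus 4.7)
The plan is to apply Lemma~\ref{equi}: for $1<n<p$ I must produce a $p$-element of $\GL(n,q)$ lying in a unique Sylow $p$-subgroup, while for $n=p$ I must show every $p$-element of $\GL(p,q)$ lies in at least two. Lifting-the-exponent together with $v_p(q-1)=1$ gives $v_p(q^i-1)=1$ for $1\le i<p$, so $|\GL(n,q)|_p=p^n$, and the Sylow $p$-subgroup coincides with the $p$-part $T_p\cong(C_p)^n$ of the split diagonal torus $T=(\mathbb{F}_q^\ast)^n$; in particular every Sylow is elementary abelian of rank $n$. When $1<n<p$ I can pick $n$ pairwise distinct exponents $a_1,\dots,a_n$ modulo $p$ and form $x=\diag(\zeta^{a_1},\dots,\zeta^{a_n})$ with $\zeta$ a primitive $p$-th root of unity: then $C_G(x)=T$, and any Sylow $p$-subgroup $Q$ of $\GL(n,q)$ containing $x$ is abelian, hence $Q\le C_G(x)=T$, which forces $Q=T_p$ by order. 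Lemma~\ref{equi} then closes the first half.

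For $n=p$ one has $|G|_p=p^{p+1}$ and the Sylow $P=B\rtimes\langle\sigma\rangle\cong C_p\wr C_p$, with $B=T_p$ and $\sigma$ a $p$-cycle permutation matrix. A short direct check shows $B$ is the unique elementary abelian subgroup of rank $p$ in $C_p\wr C_p$, so it is characteristic in $P$; hence $N_G(P)\le N_G(T)$ and in fact $N_G(P)=T\rtimes N_{S_p}(\langle\sigma\rangle)$ has order $(q-1)^p\,p(p-1)$. By Lemma~\ref{equi} it suffices to prove every $x\in P$ lies in at least two Sylow $p$-subgroups of $G$, and for this I will use the identity
\[
\#\{Q\in\Syl_p(G):x\in Q\}=\frac{|x^G\cap P|\,|C_G(x)|}{|N_G(P)|},
\]
obtained by double counting the cover $x^G=\bigcup_{Q\in\Syl_p(G)}(x^G\cap Q)$ exactly as in the proof of Lemma~\ref{necessary}.

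I then analyse the semisimple $p$-element $x$ in three cases. If $x$ is scalar it lies in every Sylow, giving $\nu_p(G)\ge 2$. If $x$ has a repeated eigenvalue then $C_G(x)\cong\prod_i\GL(m_i,q)$ has a factor $\GL(m_j,q)$ with $m_j\ge 2$ and $\nu_p(\GL(2,q))=q(q+1)/2\ge 3$: every Sylow of $C_G(x)$ contains the central element $x$, and distinct Sylows $S_1\neq S_2$ of $C_G(x)$ cannot be contained in a common Sylow $Q$ of $G$ because $S_1,S_2\le Q\cap C_G(x)$ together with $|Q\cap C_G(x)|\le|C_G(x)|_p=|S_i|$ would force $S_1=S_2$; hence at least $\nu_p(C_G(x))\ge 2$ Sylows of $G$ contain $x$. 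The main obstacle is the remaining regular semisimple case, where $C_G(x)$ is an abelian torus and this extension argument gives nothing. When $x$ has order $p$, its eigenvalues exhaust $\mu_p(\mathbb{F}_q)$, so $x$ and the permutation matrix $\sigma\in P\setminus B$ share characteristic polynomial $X^p-1$ and are therefore $G$-conjugate; any $g$ with $g\sigma g^{-1}=x$ cannot lie in $N_G(P)$, because $N_G(P)$ preserves the characteristic subgroup $B$ but would have to send $\sigma\notin B$ to $x\in B$, so $gPg^{-1}\ne P$ is a second Sylow containing $x$. When $x$ has order $p^2$, the characteristic polynomial is $X^p-\alpha$ for a non-$p$-th power $\alpha\in\mu_p\setminus\{1\}$ (using $v_p(q-1)=1$), $C_G(x)\cong\mathbb{F}_{q^p}^\ast$, $|x^G\cap P|=(p-1)p^{p-1}$, and the formula yields $p^{p-2}(q^p-1)/(q-1)^p\ge 3$ since $q^p-1\ge(q-1)^p$ by the binomial expansion and $p\ge 3$.
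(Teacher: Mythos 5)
Your overall strategy is sound and genuinely different from the paper's. For $1<n<p$ your centralizer argument (an abelian Sylow $Q$ containing a regular diagonal element $x$ satisfies $Q\le C_G(x)=T$, hence $Q=T_p$) is correct and shorter than the paper's, which instead attaches to every Sylow $p$-subgroup a unique decomposition of $\F_q^n$ into lines (Lemmas \ref{char}, \ref{diag}, \ref{LastOne}) and checks uniqueness of that decomposition. For $n=p$ the paper shows directly that every $p$-element preserving one line decomposition preserves a second one, via explicit changes of basis (Vandermonde-type vectors and geometric sums), whereas you count the Sylow subgroups through $x$ with the identity $|x^G\cap P|\,|C_G(x)|/|N_G(P)|$ (correctly derived, as in Lemma \ref{necessary}) and a case analysis by rational canonical form. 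Both routes work; yours trades explicit constructions for knowledge of $N_G(P)$ and of the conjugacy classes of $p$-elements.

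Two points need repair, though. First, $N_G(P)$ is not $T\rtimes N_{S_p}(\langle\sigma\rangle)$: the full diagonal torus does not normalize $P$, since conjugating $\sigma$ by $\diag(t_1,\dots,t_p)$ multiplies the nonzero entries of $\sigma$ by the ratios $t_{i+1}/t_i$, which must lie in $\mu_p$ for the result to stay in $P=T_p\langle\sigma\rangle$; this forces all $t_i$ to be congruent modulo $\mu_p$. Hence $N_G(P)=\bigl(Z(G)T_p\bigr)\rtimes N$ with $N\cong N_{S_p}(\langle\sigma\rangle)$, of order $(q-1)(p-1)p^p$, not $(q-1)^p p(p-1)$ (the two agree only when $q=p+1$). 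With the correct value your count in the order-$p^2$ case becomes $(q^p-1)/\bigl(p(q-1)\bigr)=(1+q+\dots+q^{p-1})/p\geq 2$ since $q\geq p+1$, so the conclusion survives --- but only because your error overestimates $|N_G(P)|$ and hence underestimates the count; as stated, the numerical identity you use is false.

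Second, in the order-$p$ regular case you assume $x\in B$ (``send $\sigma\notin B$ to $x\in B$''), yet this case also contains elements of $P\setminus B$, namely $\sigma$ itself and all $b\sigma^j$ with $j\not\equiv 0 \pmod p$ and $\prod_i b_i=1$; for $x=\sigma$ the argument collapses (take $g=1$). The fix is one line: the number of Sylow $p$-subgroups containing an element is constant on conjugacy classes, and this class has a representative in $B$ (a diagonal matrix whose eigenvalues exhaust $\mu_p$), for which your argument is correct. Two smaller remarks: in the repeated-eigenvalue case you should note that such a $p$-element necessarily has order $p$ and is diagonalizable over $\F_q$ (an eigenvalue of order $p^2$ has $p$ distinct Galois conjugates, leaving no room for a repetition), so that $C_G(x)\cong\prod_i\GL(m_i,q)$ is justified; and you need $\nu_p(\GL(m,q))\geq 2$ for every $2\le m\le p-1$, not just $m=2$ --- which does hold, since the Sylow is the $p$-part of the diagonal torus and its normalizer is the proper monomial subgroup.
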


We begin with a few lemmas. The first one follows from Proposition 7.13 of \cite{sam}.

\begin{lem}
If $p$ is an odd prime and $q$ is a prime power such that the $p$-part of $q-1$ is $p$, then the Sylow $p$-subgroups of $\GL(p,q)$ are isomorphic to $C_{p}\wr C_p$ and the Sylow $p$-subgroups of $\GL(n,q)$ are isomorphic to $(C_{p})^{n}$ for every integer $n$ with $1 < n < p$.
\end{lem}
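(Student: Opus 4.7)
The plan is to compute the $p$-part of $|\GL(n,q)|$ directly, and then exhibit explicit Sylow $p$-subgroups of the claimed isomorphism type; matching orders will force these to be Sylow.

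First, I would apply the standard order formula
$|\GL(n,q)| = q^{n(n-1)/2}\prod_{i=1}^{n}(q^{i}-1)$.
Since $p \mid q-1$ (and $p$ is odd) we have $p \nmid q$, so the prefactor is a $p'$-number and $|\GL(n,q)|_{p} = \prod_{i=1}^{n} (q^{i}-1)_{p}$. The number-theoretic input is the identity $q^{i}-1 = (q-1)(q^{i-1}+\cdots+q+1)$ together with $q \equiv 1 \pmod{p}$, which yields $q^{i-1}+\cdots+q+1 \equiv i \pmod{p}$. Hence for $1 \leq i < p$ the second factor is a unit mod $p$, so $(q^{i}-1)_{p} = (q-1)_{p} = p$; for $i = p$ the second factor is divisible by $p$, and the lifting-the-exponent lemma (applicable precisely because $p$ is odd) gives $(q^{p}-1)_{p} = p\cdot(q-1)_{p} = p^{2}$. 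Consequently $|\GL(n,q)|_{p} = p^{n}$ for $1 < n < p$, while $|\GL(p,q)|_{p} = p^{p-1}\cdot p^{2} = p^{p+1}$.

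Second, the hypothesis provides a primitive $p$-th root of unity $\zeta \in \FF_{q}^{\times}$. For $1 < n < p$, the diagonal subgroup
$D_{n} = \{\diag(\zeta^{a_{1}},\ldots,\zeta^{a_{n}}) : a_{i} \in \ZZ/p\ZZ\}$
of $\GL(n,q)$ is elementary abelian of order $p^{n}$; since this matches $|\GL(n,q)|_{p}$, $D_{n}$ is a Sylow $p$-subgroup isomorphic to $(C_{p})^{n}$. For $n = p$, I would take $D = D_{p} \cong (C_{p})^{p}$ and adjoin the permutation matrix $\sigma \in \GL(p,q)$ corresponding to the $p$-cycle $(1\,2\,\cdots\,p)$. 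Then $\sigma$ has order $p$, lies outside $D$, and conjugation by $\sigma$ cyclically permutes the $p$ diagonal coordinates; this is exactly the regular action of $C_{p}$ on itself by translation, so $P := D \rtimes \langle \sigma \rangle \cong C_{p} \wr C_{p}$. Since $|P| = p^{p+1} = |\GL(p,q)|_{p}$, $P$ is a Sylow $p$-subgroup of $\GL(p,q)$.

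The main obstacle is the count at $i = p$, which requires a careful application of the lifting-the-exponent lemma and is exactly where both the hypothesis "$p$ is odd" and "the $p$-part of $q-1$ equals $p$" are jointly used; everything else is either a direct product computation inside a split torus or the explicit wreath-product construction above.
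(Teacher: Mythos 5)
Your argument is correct. Note, however, that the paper does not actually prove this lemma: it simply cites Proposition 7.13 of Sambale's book (reference \cite{sam}), which describes Sylow subgroups of $\GL(n,q)$ in cross characteristic. What you have done is reconstruct the underlying standard argument from scratch, and all the steps check out: the $p$-part of $\prod_{i=1}^{n}(q^{i}-1)$ is computed correctly (the factor $1+q+\cdots+q^{i-1}\equiv i\pmod p$ is a unit for $i<p$, and lifting-the-exponent, valid since $p$ is odd, gives $(q^{p}-1)_{p}=p\,(q-1)_{p}=p^{2}$), so $|\GL(n,q)|_{p}=p^{n}$ for $1<n<p$ and $|\GL(p,q)|_{p}=p^{p+1}$; the diagonal group $D_{n}\cong (C_{p})^{n}$ then has full $p$-part and is Sylow, and for $n=p$ the group $D_{p}\rtimes\langle\sigma\rangle$ with $\sigma$ the $p$-cycle permutation matrix is a semidirect product of order $p^{p+1}$ (since $\sigma$ normalizes $D_{p}$ and $\langle\sigma\rangle\cap D_{p}=1$) realizing the regular wreath product $C_{p}\wr C_{p}$, hence Sylow by the order count. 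Your hypotheses are used exactly where they must be: $(q-1)_{p}=p$ ensures the diagonal $p$-torsion has order exactly $p^{n}$, and oddness of $p$ is needed for the exponent-lifting step. The trade-off is simply self-containedness versus brevity: your explicit construction makes the isomorphism types transparent and is all the paper really needs, while the citation to \cite{sam} covers the general theory (arbitrary $p$-parts and arbitrary $n$) of which this lemma is the special case $(q-1)_{p}=p$, $n\le p$.
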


We need a property of the groups $C_{p} \wr C_{p}$.

\begin{lem}
\label{char}
For every odd prime $p$, the base group $B$ of $C_{p} \wr C_{p}$ is the unique abelian maximal subgroup of $C_{p} \wr C_{p}$ and, in particular, $B$ is characteristic in $C_{p} \wr C_{p}$.
\end{lem}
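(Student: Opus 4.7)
My plan is to identify $B$ as the unique abelian maximal subgroup of $G = C_p \wr C_p$; since every automorphism of $G$ permutes its maximal subgroups and preserves abelianness, uniqueness automatically implies that $B$ is characteristic. So the whole task reduces to showing: for $p \geq 3$, every maximal subgroup $M \neq B$ is non-abelian.

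First, I would determine the Frattini subgroup. Write $B = \langle x_1, \ldots, x_p \rangle$ with the generator $t$ of the top $C_p$ acting by $x_i^t = x_{i+1 \pmod{p}}$. Since $G/B$ is cyclic and $B$ is abelian, $G' = [B, t]$ is the ``augmentation subgroup'' $\{\prod x_i^{a_i} : \sum a_i \equiv 0 \pmod p\}$, of order $p^{p-1}$. A direct computation gives $(bt^i)^p = b \cdot b^{t^i} \cdots b^{t^{i(p-1)}} \in C_B(t) = Z(G)$ for every $i \not\equiv 0$, so $G^p \leq Z(G)$. Moreover $Z(G) = \langle x_1 x_2 \cdots x_p\rangle \leq G'$ because $(1,1,\ldots,1)$ has coordinate sum $p \equiv 0 \pmod p$. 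Hence $\Phi(G) = G'G^p = G' = [B,t]$ has index $p^2$ in $G$, so $G$ has exactly $p+1$ maximal subgroups, one of which is $B$.

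Now I would pick any other maximal subgroup $M$. Since $M \neq B$ contains $\Phi(G)$ and has index $p$, it contains some element $g = b t^i$ with $b \in B$ and $1 \leq i \leq p-1$. For any $z \in \Phi(G) \leq B$, the abelianness of $B$ gives
\[
 g^{-1} z g \;=\; t^{-i} b^{-1} z b\, t^i \;=\; t^{-i} z t^i \;=\; z^{t^i}.
\]
Because $\gcd(i,p)=1$, the element $t^i$ generates $\langle t\rangle$, so $C_{\Phi(G)}(t^i) = C_{\Phi(G)}(t) = Z(G) \cap \Phi(G) = Z(G)$, a group of order $p$. Since $p \geq 3$, we have $|\Phi(G)| = p^{p-1} > p = |Z(G)|$, so there exists $z \in \Phi(G)$ with $z^{t^i} \neq z$, i.e.\ $[g, z] \neq 1$. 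As $g$ and $z$ both lie in $M$, this shows $M$ is non-abelian.

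I do not expect a serious obstacle here; the only subtlety is that the argument genuinely requires $p \geq 3$ (for $p = 2$, $C_2 \wr C_2 \cong D_8$ has several abelian maximal subgroups), and this enters precisely at the strict inequality $|Z(G)| < |\Phi(G)|$ used to produce a non-central element of $\Phi(G)$. Once non-abelianness of every $M \neq B$ is established, uniqueness of $B$ as an abelian maximal subgroup is immediate, and characteristicness follows.
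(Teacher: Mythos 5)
Your argument is correct, but it takes a genuinely different route from the paper. You compute $\Phi(G)=G'=[B,t]$ (of order $p^{p-1}$, so $G$ has exactly $p+1$ maximal subgroups), and then show that any maximal $M\neq B$ contains an element $g=bt^i$ with $i\not\equiv 0$ which acts on $\Phi(G)\leq B$ as $t^i$; since $C_{\Phi(G)}(t^i)=Z(G)$ has order $p<p^{p-1}$ for $p\geq 3$, you exhibit a non-commuting pair inside $M$. All the intermediate claims check out: $[B,t]$ is the coordinate-sum-zero subgroup, $(bt^i)^p$ is a norm element lying in $C_B(t)=Z(G)$, $Z(G)=\langle x_1\cdots x_p\rangle\leq G'$, and $M\not\leq B$ forces $M$ to contain such a $g$. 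The paper instead argues by pure order counting: if $A\neq B$ were another abelian maximal subgroup, then $G=AB$ (as $B$ is normal of index $p$), so $A\cap B$ is centralized by both factors and hence $A\cap B\leq Z(G)$, which has order $p$; the product formula then gives $p^{p+1}=|G|=|A||B|/|A\cap B|\geq p^{2p-1}$, a contradiction for odd $p$. The paper's proof is shorter and avoids any structure theory beyond $|Z(G)|=p$, while yours buys extra structural information (the Frattini subgroup, the count $p+1$ of maximal subgroups, and an explicit witness of non-commutativity in each $M\neq B$) and makes transparent exactly where $p>2$ enters; both approaches correctly fail at $p=2$, where $C_2\wr C_2\cong D_8$ has three abelian maximal subgroups.
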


\begin{proof}
Assume that $A$ is another abelian maximal subgroup in $G = C_{p} \wr C_{p}$. We have $G = AB$ and $A \cap B \leq Z(G)$ which is thus of order at most $p$. It follows that $p^{p+1} = |G| = |AB| = |A||B|/|A \cap B| \geq p^{2p-1}$, which is a contradiction.  
\end{proof}

\begin{lem}\label{diag}
Let $p$ be a prime, let $n\leq p$,  let $q$ be a prime power such that $p$ divides $q-1$ and let $$R=\{\diag(x_1,\ldots, x_n)|x_1,\ldots, x_n\in (\F_{q}^{\times})_p\}\leq \GL(n,q).$$ If $A\in \GL(n,q)$ normalizes $R$, then there exists a unique non-zero entry in each row and column of $A$.
\begin{proof}
Since $A$ normalizes $R$, we have that for each $x_1,\ldots , x_n \in  (\F_{q}^{\times})_p$ there exist $y_1,\ldots , y_n \in  (\F_{q}^{\times})_p$
 such that 
 $$\diag(x_1,\ldots , x_n )\cdot A=A\cdot \diag(y_1,\ldots , y_n).$$

  Let us take $x_1,\ldots , x_n $ such that all $x_i$ are different. Looking at the first row of the matrix equality, we have that $a_{1j}x_1=a_{1j}y_j$ for all $j$ with $1 \leq j \leq n$. There exists $j_1$ with $1 \leq j_{1} \leq n$ such that $a_{1j_{1}}\not=0$ and hence $x_1=y_{j_1}$. Now, looking at column $j_1$, we have that $a_{ij_{1}}x_i=a_{ij_{1}}y_{j_1}=a_{ij_{1}}x_1$ for all $i$ with $1 \leq i \leq n$. Since all $x_i$ are different, we have that $a_{ij_{1}}=0$ for all $i\not=1$. 
 
 Now, looking at the second row, we can repeat the argument to find $j_2\in \{1,\ldots ,n\}\setminus \{j_1\}$ such that $a_{2j_2}\not=0$ and $a_{ij_2}=0$ for all $i\not=2$. Thus, repeating the process $n$ times, we can rearrange the numbers $\{1,\ldots, n\}$ as $\{j_1,\ldots,j_n\}$ such that the only non-zero entry in the column $j_i$ is $a_{ij_i}$. Thus, $A$ has the desired form.
\end{proof}
\end{lem}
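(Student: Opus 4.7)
My plan is to translate the normalization condition into an entrywise equation, then force $A$ to be monomial by choosing a diagonal matrix in $R$ whose entries are pairwise distinct. Concretely, since $R$ is abelian and $A$ normalizes $R$, for every $D_x = \diag(x_1,\dots,x_n) \in R$ there exists $D_y = \diag(y_1,\dots,y_n) \in R$ with $A D_x = D_y A$. Writing $A = (a_{ij})$ and equating the $(i,j)$-entries on both sides produces the scalar equations
\[
a_{ij}\, x_j = y_i\, a_{ij} \qquad (1 \le i,j \le n).
\]

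Next I would exploit the freedom in choosing $D_x$. Since $p \mid q-1$, the cyclic group $(\F_q^\times)_p$ has order at least $p$, and the hypothesis $n \le p$ lets me pick $x_1,\dots,x_n \in (\F_q^\times)_p$ that are pairwise distinct. For such a choice, fix a row $i$: if $a_{ij} \ne 0$ then the equation forces $y_i = x_j$, and since the $x_j$'s are all different, at most one index $j$ can satisfy this. Hence every row of $A$ contains at most one non-zero entry. Because $A \in \GL(n,q)$ is invertible, each row must contain at least one non-zero entry, so it contains exactly one. A symmetric argument reading off columns (or simply the pigeonhole principle, since an $n \times n$ matrix with exactly one non-zero entry per row and no zero columns is monomial) shows that each column also has exactly one non-zero entry, which is the conclusion.

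The only delicate point is ensuring that a genuinely generic $D_x$ is available, and this is precisely what the hypothesis $n \le p$ guarantees: without it the subgroup $R$ might have too few distinct $p$-power scalars on the diagonal and one could not separate columns via the single equation $a_{ij}(x_j - y_i) = 0$. This is the step I view as the main (and essentially only) obstacle; the rest of the argument is routine linear algebra once that choice is made. The author's own proof follows exactly this strategy, iteratively identifying for each row $i$ a unique column $j_i$ with $a_{i j_i} \ne 0$ and eliminating all other entries in that column via the same equation applied across rows.
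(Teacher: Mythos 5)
Your proposal is correct and follows essentially the same strategy as the paper: translate $A\,\diag(x)\!=\!\diag(y)\,A$ (up to a harmless left/right convention) into the entrywise relation $a_{ij}(x_j-y_i)=0$, choose pairwise distinct $x_i$'s in $(\F_q^\times)_p$ using $n\le p$ and $p\mid q-1$, and conclude that $A$ is monomial; the only cosmetic difference is that the paper eliminates column entries iteratively while you use invertibility plus pigeonhole, which is a valid shortcut.
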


\begin{lem}\label{LastOne}
Let $n\leq p$ and let $q$ be a prime power such that $p$ divides $q-1$ and let $P\in \Syl_p(\GL(n,q))$. Then there exists a unique set of $1$-dimensional subspaces $\{V_1,\ldots , V_n\}$ such that $\F_{q}^{n}=V_1\oplus\ldots \oplus V_n$  and $x(V_i)\in \{V_1,\ldots , V_n\}$ for all $1\leq i \leq n$ and all $x\in P$.
\begin{proof}
We define  $$R_n=\{\diag(x_1,\ldots, x_n)|x_1,\ldots, x_n\in (\F_{q}^{\times})_p\}\leq \GL(n,q)$$ and we  also define $P_n=R_n$ if $n<p$ and $P_p=R_p\langle B\rangle$, where $B = (b_{i,j}) \in \GL(p,q)$ is the matrix defined by $b_{i+1,i}=1$ for $i<n$, $b_{1,n}=1$ and $0$ in the rest of the entries. Thus, $P_n\in \Syl_p(\GL(n,q))$  and $R_n$ is characteristic in $P_n$ for all $n\leq p$ by Lemma \ref{char}. Thus, $P$ possesses a characteristic subgroup $R$  which is conjugate to $R_n$. In particular, $R$ determines a unique set of $1$-dimensional spaces $\{V_1,\ldots , V_n\}$ such that $\F_{q}^{n}=V_1\oplus\ldots \oplus V_n$ and $x(V_i)=V_i$ for all $x\in R$ and all $i$ with $1 \leq i\leq n$.  Thus, $x(V_i)\in \{V_1,\ldots, V_n\}$ for all $x\in P$ and all $i$ with $1 \leq i\leq n$. It only remains to prove that it is the unique set of $1$-dimensional spaces satisfying these properties.

Suppose that there exist two sets $\{V_1,\ldots, V_n\}$ and $\{W_1,\ldots, W_n\}$ consisting of $1$-dimensional spaces such that $x(V_i)\in \{V_1,\ldots, V_n\}$ and $x(W_i)\in\{W_1,\ldots, W_n\}$ for all $i$ with $1 \leq i \leq n$ and all $x\in P$. Let $v_{1}, \ldots , v_{n}$ and $w_{1}, \ldots , w_{n}$ be vectors such that $V_i=\langle v_i \rangle$ and $W_i=\langle w_i \rangle$ for all $i$ with $1 \leq i \leq n$ and let $A$ be the matrix of change of base from the base $\{w_1,\ldots, w_n\}$ to the base $\{v_1,\ldots, v_n\}$. It follows that $A$ normalizes $P$ and since $R$ is characteristic in $P$ by Lemma \ref{char} we deduce that $A$ normalizes $R$. Therefore, by Lemma \ref{diag}, each row and column of $A$ possesses exactly one non-zero entry and hence $\{V_1,\ldots, V_n\}=\{W_1,\ldots, W_n\}$.
\end{proof}
\end{lem}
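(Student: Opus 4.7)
The plan is to build an explicit model for a Sylow $p$-subgroup of $\GL(n,q)$, identify inside it a characteristic diagonal subgroup that pins down a canonical eigenspace decomposition, and then use the earlier lemmas on normalizers to upgrade that canonical decomposition to one that is preserved (as a set) by the full Sylow subgroup and that is unique.

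First I would fix the concrete Sylow $p$-subgroup $P_n$. For $n<p$ the Sylow $p$-subgroup of $\GL(n,q)$ is abelian of order $p^n$, and I would take it to be the diagonal group
\[
R_n=\{\diag(x_1,\ldots,x_n)\mid x_i\in(\F_q^\times)_p\}.
\]
For $n=p$, I would take $P_p=R_p\langle B\rangle$, where $B$ is the cyclic permutation matrix with $b_{i+1,i}=1$ for $i<p$ and $b_{1,p}=1$. A standard count of $p$-parts of $|\GL(n,q)|$ confirms $|P_n|$ is the correct order, and $P_p\cong C_p\wr C_p$ with base group $R_p$.

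Next I would argue that $R_n$ is characteristic in $P_n$. For $n<p$ this is immediate since $R_n=P_n$; for $n=p$ this is precisely Lemma~\ref{char}, which identifies the base group as the unique abelian maximal subgroup of $C_p\wr C_p$. Now let $P$ be an arbitrary Sylow $p$-subgroup. Since all Sylow $p$-subgroups are conjugate, there is $g\in\GL(n,q)$ with $P=P_n^{\,g}$, and then $R:=R_n^{\,g}$ is a characteristic subgroup of $P$. The group $R$, being conjugate to a diagonal group of semisimple matrices, is simultaneously diagonalizable, so it determines a canonical decomposition $\F_q^n=V_1\oplus\cdots\oplus V_n$ into common $1$-dimensional eigenspaces. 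Because every $x\in P$ normalizes the characteristic subgroup $R$, conjugation by $x$ permutes the common eigenspaces of $R$, hence $x(V_i)\in\{V_1,\ldots,V_n\}$ for every $i$, giving existence.

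For uniqueness, suppose $\{W_1,\ldots,W_n\}$ is another such decomposition, with $W_i=\langle w_i\rangle$ and $V_i=\langle v_i\rangle$. Let $A$ be the matrix of the change of basis from $(w_1,\ldots,w_n)$ to $(v_1,\ldots,v_n)$; I would then show that $A$ conjugates $P$ to a Sylow $p$-subgroup fixing each $W_i$ setwise, and in particular $A^{-1}RA$ is again a characteristic simultaneously-diagonal subgroup of this conjugate, forcing $A$ to normalize $R$ (after replacing $R$ by a conjugate inside $P$ if necessary). Then Lemma~\ref{diag} applies and says $A$ is a monomial matrix, so the bases $(v_i)$ and $(w_i)$ agree up to a permutation and scalars, whence $\{V_1,\ldots,V_n\}=\{W_1,\ldots,W_n\}$.

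The main obstacle I foresee is the uniqueness step, specifically the bookkeeping needed to justify that the change-of-basis matrix $A$ really does normalize $R$ (and not merely $P$). The cleanest route is to observe that both decompositions come from characteristic diagonal subgroups $R,R'\le P$ with $R'=A^{-1}RA$, and to argue that inside the single Sylow subgroup $P$ any two such simultaneously-diagonalizable characteristic subgroups must coincide—this reduces, via Lemma~\ref{char} when $n=p$ and trivially when $n<p$, to a statement inside $P_n$. Once this identification $R=R'$ is in hand, $A\in N_{\GL(n,q)}(R)$ and Lemma~\ref{diag} closes the argument.
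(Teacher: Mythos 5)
Your proposal follows essentially the same route as the paper: the explicit model $P_n=R_n$ (or $R_p\langle B\rangle$), the characteristic diagonal subgroup via Lemma~\ref{char}, the common eigenspaces of $R$ for existence, and the change-of-basis matrix together with Lemma~\ref{diag} for uniqueness. The delicate point you flag --- justifying that $A$ normalizes $R$ rather than merely $P$ --- is exactly the step the paper asserts with the brief ``It follows that $A$ normalizes $P$,'' so your plan matches the published argument in both structure and level of detail.
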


\begin{proof}[Proof of Theorem \ref{GL}]
By  Lemmas \ref{LastOne} and \ref{equi}, we only have to study whether there exists a $p$-element $x$ such that $x$ admits a unique decomposition $\F_{q}^{n}=V_1\oplus\ldots \oplus V_n$ with each $V_{i}$ of dimension $1$ and $x(V_{i}) \in \{ V_{1}, \ldots , V_{n} \}$ for every $i$ with $1 \leq i \leq n$.

Assume first that $n<p$. Let $P\in \Syl_p(\GL(n,q))$ and let $\{V_1,\ldots, V_n\}$ be the set of $1$-dimensional subspaces associated to $P$. Any $x \in P$ permutes the subspaces in $\{V_1,\ldots, V_n\}$, but since $n<p$, the only possibility is that $x(V_i)=V_i$ for every $i$ with $1 \leq i \leq n$. Let us take $x\in P$ such that the $V_i$ are eigenspaces of $x$ associated to different eigenvalues. If $x$ admits another decomposition with $\{W_1,\ldots, W_n\}$, then the $W_i$ must again be eigenspaces of $x$ and since the $V_{i}$ are all different eigenspaces, we have that $\{V_1,\ldots, V_n\}=\{W_1,\ldots, W_n\}$. Therefore, this $x$ lies in a unique Sylow $p$-subgroup of $\GL(n,q)$ by Lemma \ref{equi}.

Assume now that $n=p$. Let $P\in \Syl_p(\GL(n,q))$ and let $\{V_1,\ldots, V_n\}$ be the set of $1$-dimensional subspaces of $\F_{q}^{n}$ associated to $P$. Let $v_{1}, \ldots , v_{n}$ be vectors with $V_{i} = \langle v_{i} \rangle$ for every $i$ with $1 \leq i \leq n$. The group $P$ acts on $\{V_1,\ldots, V_n\}$. Let $x \in P$. There are two possibilities for the action of $x$ on $\{V_1,\ldots, V_n\}$.

\begin{itemize}
\item [a)] Each $V_i$ is an eigenspace of $x$ associated to an eigenvalue $\epsilon_i$, for some $\epsilon_i \in  (\F_{q}^{\times})_p$.

\item [b)] $x$ permutes the subspaces $V_i$ cyclically.
\end{itemize} 

Assume that we are in case a). Assume first that we have an eigenvalue of $x$ with multiplicity at least $2$. In this case we may assume that $\epsilon_1=\epsilon_2$ and take $w_1=v_1+v_2$ and $w_i=v_i$ for every $i>1$. For each $i$ with $1 \leq i \leq n$, let $W_i=\langle w_i \rangle$. We have that  $\F_{q}^{n}=W_1\oplus\ldots \oplus W_n$ and $x(W_i)=W_i$ for all $i\in \{1,\ldots, n\}$. Apply Lemma \ref{equi}.

Assume now that all eigenvalues $\epsilon_i$ are different. Since $p^2$ does not divide $q-1$, we have that $\epsilon_i^{p}=1$ for all $i$ (in fact $\{\epsilon_1,\ldots,\epsilon_p\}=(\F_{q})_p$).  Now, we take $w_i=(\epsilon_1)^{i-1}v_1+\ldots+(\epsilon_p^{i-1})^{p-1}v_p$ and $W_i=\langle w_i \rangle$ for $i\in \{1,\ldots, p\}$. Therefore, the coordinates of $v_i$ in the base $w_i$ is the transpose of the Vandermonde matrix with powers of the $\epsilon_i$. As a consequence, $\{w_1,\ldots,w_p\}$  is a base of $\F_{q}^{p}$. Therefore, we have that $\F_{q}^{p}=W_1\oplus\ldots \oplus W_p$ and, by definition, we also have that  $x(W_i)=W_{i+1}$ if $i<p$ and $x(W_p)=W_{1}$. Apply Lemma \ref{equi}.

Assume now that we are in case b). We may assume that $x(v_i)=v_{i+1}$ for $i < p$ and $x(v_{p}) = av_{1}$ for some $a\neq 0$. Suppose first that $a=1$. Let the matrix of $x$ in the base $\{v_1,\ldots,v_p\}$ be $B$. Since the characteristic polynomial of $B$ is $x^p-1$, we deduce that the set of eigenvalues of $B$ is $\{1,\epsilon,\ldots, \epsilon^{p-1}\}$, where $\epsilon \in \F_{q}^{p}$ is an element of order $p$. In particular, we deduce that  $B$ may be diagonalized  and that for all $i \in \{0,1,\ldots,p-1\}$ the dimension of the eigenspace associated to $\epsilon^i$ is $1$. Now, for $i \in \{1,\ldots,p\}$, we take an eigenvector $w_i$ associated to $\epsilon^{i-1}$ and set $W_i=\langle w_i \rangle$. We have $\F_{q}^{p}=W_1\oplus\ldots \oplus W_p$ and $x(W_i)=W_i$ for all $i$ with $1 \leq i \leq p$. Apply Lemma \ref{equi}.

Now, assume that $a\neq1$. Take
\begin{align*}
w_1 &= v_1 + v_2 + \ldots + v_p; \\
w_2 &= x(w_1) = a v_1 + v_2 + \ldots + v_p; \\
&\;\, \vdots \\
w_p &= x^{p-1}(w_1) = a v_1 + av_2 + \ldots + a v_{p-1} + v_p.
\end{align*}
Note that
\[
x(w_p) = a w_1, \quad x(w_i) = w_{i+1} \text{ for } i < p,
\]
and \(\{w_1, \ldots, w_p\}\) is a basis of \(V\) (because $a\neq1$). Therefore, \(x\) stabilizes two sets of 1-dimensional subspaces, namely $\{V_1,\dots,V_p\}$ and $\{\langle w_1\rangle,\dots,\langle w_p\rangle\}$.

We conclude that every $p$-element $x$ in $\GL(p,q)$ lies in more than one Sylow $p$-subgroup of $\GL(p,q)$.
\end{proof}

\section{The groups $\SL(2,q)$ and $\PSL(2,q)$}

Our goal in this section  is to study the existence of a redundant Sylow $p$-subgroup in the groups $\SL(2,q)$ and $\PSL(2,q)$. That is, our goal is to prove Theorem D, which we restate here.

\begin{thm}\label{PSL}
Let $p$ be a prime and let $G$ be  $\SL(2,q)$  or  $\PSL(2,q)$ with $q$  a prime power. The group $G$ possesses a redundant Sylow $p$-subgroup if and only if $p=2$ and $q$ is  none of the following.

\begin{itemize}
\item[a)]$q=2^k$ for an integer $k$.

\item[b)] $q=2^k+1$ for an integer $k$.

\item[c)]  $q=2^k-1$ for an integer $k$.
\end{itemize}
\end{thm}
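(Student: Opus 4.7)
The plan is to reduce to the case $p = 2$ with $q$ odd, and then to use the structure of maximal tori of $G$ and their normalizers together with Lemma \ref{equi}. If $p$ is the defining characteristic of $\mathbb{F}_q$ (in particular when $q = 2^k$ and $p = 2$), Corollary \ref{lie} applies. If $p$ is odd and coprime to $q$, then $p$ divides exactly one of $q-1, q+1$ (their gcd is at most $2$), and the Sylow $p$-subgroup of $G$ is contained in the corresponding cyclic maximal torus, hence is cyclic; Corollary \ref{op} then gives the conclusion. Only $p = 2$ with $q$ odd remains.

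For the forward direction, assume $q = 2^k \pm 1$ is odd. Let $T$ be the split torus of $\SL(2,q)$ when $q = 2^k+1$ and the non-split torus when $q = 2^k-1$, so that $|T| = 2^k$ is itself a $2$-group. The normalizer $N_G(T) = T\langle w\rangle$, with Weyl element $w$ satisfying $w^2 = -I$ and $wtw^{-1} = t^{-1}$ for $t \in T$, has order $2|T| = 2^{k+1} = (q^2-1)_2$; thus $P := N_G(T)$ is a Sylow $2$-subgroup of $\SL(2,q)$. If $t_0$ generates $T$ and $P'$ is any Sylow $2$-subgroup containing $t_0$, then $\langle t_0\rangle = T$ has index $2$ in $P'$ and is therefore normal in $P'$, forcing $P' \leq N_G(T) = P$ and so $P' = P$. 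By Lemma \ref{equi}, $\SL(2,q)$ has no redundant Sylow $2$-subgroup. The same argument, applied to $\bar T = T/Z$ and $\bar P = P/Z$ inside $\PSL(2,q)$, handles $\PSL(2,q)$, using that $N_{\bar G}(\bar T) = N_G(T)/Z = \bar P$.

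For the converse, assume $q$ is odd with $q \neq 2^k \pm 1$, and let $T$ be a maximal torus of $\SL(2,q)$ with $|T|_2$ maximal. Write $|T|_2 = 2^a$ and $m := |T|_{2'}$; then $m \geq 3$ because $|T| = q \pm 1$ is not a $2$-power. The group $N_G(T)$ is dicyclic of order $2|T|$ with presentation $\langle x, w \mid x^{|T|} = 1,\ w^2 = x^{|T|/2},\ wxw^{-1} = x^{-1}\rangle$; its $2$-part $T_2 = \langle x^m\rangle$ is normal, and a direct computation shows that $P_0 := T_2\langle w\rangle$ is self-normalizing in $N_G(T)$, whence $\nu_2(N_G(T)) = m$. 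If $t$ generates $T_2$, then any Sylow $2$-subgroup $P'$ of $\SL(2,q)$ containing $t$ contains $\langle t\rangle = T_2$ of index $2$, hence normal, so $P' \leq N_G(T_2) = N_G(T)$; since $T_2 \trianglelefteq N_G(T)$ is a normal $2$-subgroup it lies in every Sylow $2$-subgroup of $N_G(T)$, whence the number of Sylow $2$-subgroups of $\SL(2,q)$ containing $t$ is exactly $m \geq 3$. Any non-central $2$-element $t'$ of order $2^b$ with $2 \leq b \leq a$ lies in some conjugate of $T_2$ and can be written as $t' = s^{2^{a-b}}$ for a generator $s$ of that $T_2$, so $t'$ lies in at least as many Sylow $2$-subgroups as $s$, namely $\geq m \geq 3$. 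The central elements $\pm I$ lie in every Sylow $2$-subgroup, and $\nu_2(\SL(2,q)) > 1$ for all $q$ remaining (the smallest being $q = 11$). The conclusion transfers to $\PSL(2,q)$ via the natural projection $\pi$: since $Z \leq P$ for every $P \in \Syl_2(\SL(2,q))$, one has $\pi(t) \in \pi(P)$ if and only if $t \in P$, so the Sylow $2$-count for any $2$-element is preserved under $\pi$.

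The main obstacle I anticipate is the explicit verification that $\nu_2(N_G(T)) = m$ for the dicyclic group $N_G(T)$, which requires a direct computation with the presentation to establish self-normalization of $P_0$. Once this is in place, the precise hypothesis ``$q$ is not of the form $2^k \pm 1$'' translates cleanly into the inequality $m > 1$ that drives the converse.
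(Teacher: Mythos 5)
Your proposal is correct in substance but follows a genuinely different route from the paper. For the two easy reductions (defining characteristic via Corollary \ref{lie} or \ref{TI}, and cyclic Sylow $p$-subgroups for odd $p\nmid q$ via Corollary \ref{op}) you and the paper agree. The divergence is in the case $p=2$, $q$ odd. The paper first relates $\Syl_2(\SL(2,q))$ and $\Syl_2(\PSL(2,q))$ through normalizer orders taken from Dickson (Lemma \ref{norm}, Corollary \ref{selfPSL}); for $q=2^k\pm1$ it checks $q\in\{3,5,7,9\}$ by GAP and for $k\geq 4$ uses the character table of $\SL(2,q)$ to count conjugacy classes of elements of order $2^k$, showing $|x^G\cap P|=2$ and $C_G(x)=\langle x\rangle$, so that the equality case of Lemma \ref{necessary} yields an element in a unique Sylow $2$-subgroup; for the remaining $q$ it proves that $C_G(x)$ is never a $2$-group (Theorem \ref{centra}, via Maschke, Schur and Singer cycles) and again invokes Lemma \ref{necessary}, with GAP for $q\leq 27$. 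You instead argue structurally with maximal tori: when $q=2^k\pm1$ the relevant torus $T$ is a cyclic $2$-group of index $2$ in a Sylow $2$-subgroup, so any Sylow containing a generator normalizes $T$ and hence equals $N_G(T)$, which has $2$-power order --- this gives uniqueness by Lemma \ref{equi} without character tables or GAP; when $q\neq 2^k\pm1$ you observe that $N_G(T)/T_2$ is dihedral of order $2m$ with $m\geq3$ odd, so $N_G(T)$ has $m$ Sylow $2$-subgroups, each of full order in $G$ and each containing $T_2$, and every noncentral $2$-element is conjugate into $T_2$, so every $2$-element lies in at least $m\geq 3$ Sylow $2$-subgroups. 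Your approach buys uniformity and elementarity (no GAP computations, no character-table input, no need for Theorem \ref{centra}), while the paper's buys explicit class-size and centralizer information and reuses the machinery of Lemma \ref{necessary}. A few asserted facts should be nailed down to make your argument airtight: that $|N_G(T)|=2|T|$ requires $|T|>2$ (so treat $q=3$, where the Sylow $2$-subgroup is normal, separately); the equality $N_G(T_2)=N_G(T)$ deserves a line (a generator of $T_2$ has order at least $4$, hence is regular semisimple with centralizer exactly $T$, so anything normalizing $T_2$ normalizes $T$) --- though for redundancy you only need the lower bound coming from the $m$ Sylow $2$-subgroups of $N_G(T)$, not the exact count; and you implicitly use that the only involution of $\SL(2,q)$ is $-I$, so every noncentral $2$-element has order at least $4$ and therefore lies in a conjugate of the torus with the larger $2$-part. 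With these small additions your proof is complete and arguably cleaner than the published one.
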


If $p$ divides $q$, then the Sylow $p$-subgroups of $\SL(2,q)$ and of $\PSL(2,q)$ intersect trivially and hence, applying Corollary \ref{TI}, we have that none of $\SL(2,q)$, $\PSL(2,q)$ can possess a redundant Sylow $p$-subgroup. Thus, in the remaining, we will assume that $p$ does not divide $q$.  Moreover, if $p$ is odd and does not divide $q$, then by  Dickson's classification of subgroups of $\SL(2,q)$ and $\PSL(2,q)$ (see  \cite[Chapter XII]{Dickson}), we have that both $\SL(2,q)$ and $\PSL(2,q)$ possess cyclic Sylow $p$-subgroups and hence, by Corollary \ref{op}, none of them can have a redundant Sylow $p$-subgroup.

It only remains to consider the case when $p=2$ and $q$ is odd. In this case, we know that $|Z(\SL(2,q))|=2$ and that $Z(\SL(2,q))=O_2(\SL(2,q))$. Let $\overline{\cdot}$ denote the quotient by $Z(\SL(2,q))$. The following result relates the Sylow $2$-subgroups of $\SL(2,q)$ with the Sylow $2$-subgroups of $\PSL(2,q)$.

\begin{lem}
\label{norm}
Let $q>13$ be an odd prime power, let $P\in\Syl_2(\SL(2,q))$ and let $T\in \Syl_2(\PSL(2,q))$. Then $N_{\SL(2,q)}(P)=P$ and  $N_{\PSL(2,q)}(T)=T$ when $q\not\equiv\pm3\pmod{8}$. If $q\equiv\pm3\pmod{8}$ then $|N_{\SL(2,q)}(P)|=3|P|$ and 
$|N_{\PSL(2,q)}(T)|=3|T|$. In particular, in both cases, $\nu_2(\SL(2,q))=\nu_2(\PSL(2,q))$.
\end{lem}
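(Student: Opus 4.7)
The plan is to first prove the assertions about $\PSL(2,q)$ and then deduce the $\SL(2,q)$ statements from the central extension $1 \to Z \to \SL(2,q) \to \PSL(2,q) \to 1$ with $Z = \{\pm I\}$. Two standard facts about $\PSL(2,q)$ for $q$ odd will be used throughout: all involutions are conjugate, with centralizer dihedral of order $q-\epsilon$ where $\epsilon = (-1)^{(q-1)/2}$; and the $2$-part of $|\PSL(2,q)|$ equals $(q-\epsilon)_2$, so $T$ is dihedral of order $\geq 8$ when $q \equiv \pm 1 \pmod 8$ and is a Klein four group $V_4$ when $q \equiv \pm 3 \pmod 8$. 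The hypothesis $q>13$ merely allows us to ignore small sporadic configurations.

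Consider first the dihedral case $q \equiv \pm 1 \pmod 8$. Let $z$ generate $Z(T)$, which has order $2$. Since $Z(T)$ is characteristic in $T$ and $\Aut(Z(T))=1$, one has
\[
N_{\PSL(2,q)}(T) \leq N_{\PSL(2,q)}(Z(T)) = C_{\PSL(2,q)}(z) \cong D_{q-\epsilon},
\]
and $T$ is a Sylow $2$-subgroup of $D_{q-\epsilon}$. I would then verify by a direct calculation inside $D_{2n} = \langle r,s \mid r^n = s^2 = 1, srs = r^{-1}\rangle$ with $n = 2^{a-1}m$ and $m$ odd, that the Sylow $2$-subgroup $\langle r^m, s\rangle$ of $D_{2n}$ is self-normalizing: an element $r^j$ normalizes it iff $m\mid j$, and similarly for reflections. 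This gives $N_{\PSL(2,q)}(T) = T$.

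In the Klein four case $q \equiv \pm 3 \pmod 8$ the centralizer reduction is unavailable since $Z(T) = T$. Instead, I will double-count pairs $(t, T')$ where $t$ is an involution of $\PSL(2,q)$ lying in the Sylow $2$-subgroup $T'$. Any such $T' \cong V_4$ is abelian, so $T' \leq C(t) \cong D_{q-\epsilon}$, and since $|C(t)|_2 = 4 = |T'|$, the Sylow $2$-subgroups of $\PSL(2,q)$ containing $t$ are exactly the Sylow $2$-subgroups of $C(t)$. Each of the $\nu_2(\PSL(2,q))$ Sylow $2$-subgroups contributes $3$ involutions, while each of the $|\PSL(2,q)|/(q-\epsilon)$ involutions lies in $\nu_2(D_{q-\epsilon}) = (q-\epsilon)/4$ Sylow $2$-subgroups (the dihedral Sylow count with $a=2$). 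Equating the two expressions yields $\nu_2(\PSL(2,q)) = |\PSL(2,q)|/12$, hence $|N_{\PSL(2,q)}(T)| = 12 = 3|T|$.

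Finally, to pass to $\SL(2,q)$, observe that $Z \leq P$ for every Sylow $2$-subgroup $P$, so $P \mapsto P/Z$ is a bijection between $\Syl_2(\SL(2,q))$ and $\Syl_2(\PSL(2,q))$. Under the quotient $\pi$, a direct check shows $N_{\SL(2,q)}(P) = \pi^{-1}(N_{\PSL(2,q)}(T))$; hence $|N_{\SL(2,q)}(P)| = 2|N_{\PSL(2,q)}(T)|$, which combined with $|P| = 2|T|$ produces the stated values $|P|$ and $3|P|$ in the two cases. The equality $\nu_2(\SL(2,q)) = \nu_2(\PSL(2,q))$ follows immediately from $|\SL(2,q)| = 2|\PSL(2,q)|$. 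I expect the main obstacle to be the $V_4$ case: without the centralizer reduction, one must carefully combine the classification of involution centralizers with the Sylow count inside a dihedral group, even though each ingredient is elementary.
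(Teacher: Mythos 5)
Your argument is correct, and it is genuinely different in character from what the paper does: the paper's proof of Lemma \ref{norm} is a one-line appeal to Dickson's classification of the subgroups of $\SL(2,q)$ and $\PSL(2,q)$, from which one reads off that the normalizer of a dihedral Sylow $2$-subgroup of order at least $8$ is itself and that the normalizer of a Klein four Sylow $2$-subgroup is $A_4$. You instead work from two classical facts (dihedral Sylow $2$-subgroups; a single class of involutions with centralizer dihedral of order $q-\epsilon$, $\epsilon\equiv q \pmod 4$) and then argue by hand: in the case $q\equiv\pm1\pmod 8$ the reduction $N(T)\leq C(z)$ for $z$ generating $Z(T)$, together with the (easily checked, and correct) self-normalization of Sylow $2$-subgroups of dihedral groups, gives $N_{\PSL(2,q)}(T)=T$; in the case $q\equiv\pm3\pmod 8$ your double count of pairs (involution, Sylow $2$-subgroup) correctly yields $3\nu_2=\bigl(|G|/(q-\epsilon)\bigr)\cdot\bigl((q-\epsilon)/4\bigr)$, hence $\nu_2(\PSL(2,q))=|\PSL(2,q)|/12$ and $|N_{\PSL(2,q)}(T)|=12=3|T|$, and the key steps there (the Sylow $2$-subgroups of $\PSL(2,q)$ through a given involution $t$ are exactly those of $C(t)$, each containing $t$ since $t$ is central in $C(t)$, and $\nu_2(D_{q-\epsilon})=(q-\epsilon)/4$) all hold. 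The transfer to $\SL(2,q)$ via $Z=\{\pm I\}\leq O_2(\SL(2,q))$, with $N_{\SL(2,q)}(P)$ equal to the full preimage of $N_{\PSL(2,q)}(P/Z)$ because $Z\leq P$, is also correct and gives $\nu_2(\SL(2,q))=\nu_2(\PSL(2,q))$. What the paper's route buys is brevity, at the cost of hiding the deduction inside Dickson's list; what yours buys is a self-contained computation (modulo the two standard structural facts, which themselves follow from Dickson or direct matrix calculation) that in addition identifies $\nu_2$ explicitly in the $q\equiv\pm3\pmod 8$ case. One small remark: your argument nowhere really needs $q>13$; that hypothesis in the lemma only serves to exclude small cases treated separately elsewhere in the paper.
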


\begin{proof}
This can be deduced from  Dickson's classification of subgroups of $\SL(2,q)$ and $\PSL(2,q)$.
\end{proof}

As a consequence, we have the following.

\begin{cor}\label{selfPSL}
Let $q>13$ be an odd prime power. The map $\varphi:\Syl_2(\SL(2,q))\rightarrow\Syl_2(\PSL(2,q))$  defined by $\varphi(P)=\overline{P}$ is a bijection.
\begin{proof}
 We know that the map $\varphi$ is surjective and since $|\Syl_2(\SL(2,q))|=|\Syl_2(\SL(2,q))|$, we have that it is a bijection.
\end{proof}
\end{cor}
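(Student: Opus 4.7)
The plan is quite short because Lemma~\ref{norm} already carries the essential content. First I would verify that $\varphi$ is well-defined: writing $\pi\colon \SL(2,q)\to \PSL(2,q)$ for the natural projection with kernel $Z = Z(\SL(2,q))$ of order $2$, the image of any Sylow $2$-subgroup $P$ of $\SL(2,q)$ under $\pi$ is a Sylow $2$-subgroup of $\PSL(2,q)$, since $P\supseteq Z$ (as $Z$ is a normal $2$-subgroup of $\SL(2,q)$) and $|\overline{P}|=|P|/2=|\SL(2,q)|_2/2=|\PSL(2,q)|_2$.

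Next I would prove surjectivity directly. Given $T\in \Syl_2(\PSL(2,q))$, let $P=\pi^{-1}(T)$. Then $P$ is a subgroup of $\SL(2,q)$ containing $Z$, of order $|P|=2|T|=|\SL(2,q)|_2$, so $P\in \Syl_2(\SL(2,q))$ and by construction $\varphi(P)=\overline{P}=T$.

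Finally, injectivity is immediate from Lemma~\ref{norm}: the equality $\nu_2(\SL(2,q))=\nu_2(\PSL(2,q))$ given there says that the domain and codomain of $\varphi$ are finite sets of equal cardinality, and any surjection between such sets is a bijection.

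The only genuine obstacle here is Lemma~\ref{norm} itself, which rests on Dickson's classification of the subgroups of $\SL(2,q)$ and $\PSL(2,q)$; once that lemma is granted, the corollary follows by the cardinality argument above with essentially no additional work.
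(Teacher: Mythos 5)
Your proposal is correct and follows essentially the same route as the paper: surjectivity of $\varphi$ (which you verify directly via preimages, where the paper simply asserts it) combined with the equality $\nu_2(\SL(2,q))=\nu_2(\PSL(2,q))$ from Lemma \ref{norm} to conclude bijectivity by counting. The extra details you supply (well-definedness and the preimage argument) are standard and only make explicit what the paper takes for granted.
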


As a consequence, we have that an element $x \in (\SL(2,q))_2$ lies in a unique Sylow $2$-subgroup of $\SL(2,q)$ if and only if $\overline{x}$ lies in a unique Sylow $2$-subgroup of $\PSL(2,q)$. Therefore, it suffices to prove Theorem D in the case $\SL(2,q)$.

\begin{thm}
Let $q=2^k\pm 1$ be a prime power for an integer $k$. Then $G=\SL(2,q)$ does not possess redundant Sylow $2$-subgroups.
\begin{proof}
If $q\in \{3,5,7,9\}$, then the result can be checked by GAP \cite{gap}. Thus, we may assume that $q=2^k\pm 1$ with $k\geq 4$. Thus, if $P \in \Syl_2(G)$, then by Lemma \ref{norm}, we have that $P=N_G(P)$.  By Dickson's classification of subgroups of $\SL(2,q)$, we know that $P$ is  isomorphic to the generalized quaternion group of order $2^{k+1}$. Let $x \in P$ be an element of order $2^{k}$. Looking at the character table and the description of the elements of $\SL(2,q)$  given in  \cite[Chapter 38]{Dornhoff}, we deduce that if $y\in \SL(2,q)$ has order $2^{k}$, then  $y$  is conjugate to $x^t$ for some odd $t$ with $1\leq t \leq 2^{k-1}-1$. Therefore, we have  that there exist $2^{k-2}$ different conjugacy classes whose elements have order $2^{k}$ and that $$|x^G\cap P|=|y^G\cap P|$$ for every element $y$ of order $2^k$.  Now, since $P$ possesses $2^{k-1}$ elements of order $2^{k}$ we deduce that $|x^G\cap P|=2$ (in fact, $x^G\cap P=\{x,x^{-1}\}=x^P$). Finally, it is possible to prove that $C_G(x)=\langle x \rangle$ and hence $$|x^G|=2\frac{|G|}{|P|}=2|\Syl_2(G)|=|x^G\cap P||\Syl_2(G)|.$$ Thus, by Lemma \ref{necessary}, $x$ lies in a unique Sylow $2$-subgroup of $\SL(2,q)$ and the result follows.
\end{proof}
\end{thm}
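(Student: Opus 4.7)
The plan is to apply Lemma~\ref{necessary} by producing a $2$-element $x \in G = \SL(2,q)$ satisfying the equality $|x^G| = |x^G \cap P| \cdot \nu_2(G)$. After disposing of the small cases $q \in \{3,5,7,9\}$ by direct computation in GAP, assume $k \geq 4$, so that $q \equiv \pm 1 \pmod{8}$; Lemma~\ref{norm} then gives $N_G(P) = P$ and hence $\nu_2(G) = |G|/|P|$. Since exactly one of $q \pm 1$ equals $2^k$ and the other has $2$-part $2$, we have $|G|_2 = 2^{k+1}$, and Dickson's classification of subgroups of $\SL(2,q)$ forces $P \cong Q_{2^{k+1}}$, the generalized quaternion group of order $2^{k+1}$.

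I would pick $x \in P$ to be a generator of the unique cyclic subgroup of order $2^k$ inside $P \cong Q_{2^{k+1}}$. Two computations are needed: (a)~$C_G(x) = \langle x \rangle$, so that $|x^G| = |G|/2^k = 2\nu_2(G)$; and (b)~$|x^G \cap P| = 2$. For (a), the order of $x$ equals $q-1$ if $q = 2^k+1$ and $q+1$ if $q = 2^k-1$, so $\langle x \rangle$ is a full maximal torus $T$ of $\SL(2,q)$ (split in the first case, non-split in the second). Since $k \geq 4$ forces $|x| \geq 16$, the element $x$ has distinct eigenvalues $\lambda, \lambda^{-1} \in \overline{\F_q}$ and is thus regular semisimple; hence $C_G(x) = T = \langle x \rangle$. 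If one prefers to avoid citing the structure theory, this can be verified by a one-line matrix computation after diagonalizing $x$ over $\overline{\F_q}$.

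For (b), in $Q_{2^{k+1}}$ every element outside the cyclic subgroup $\langle x \rangle$ of order $2^k$ has order at most $4$, so for $k \geq 4$ none of them can be $G$-conjugate to $x$. Therefore $x^G \cap P \subseteq \langle x \rangle \subseteq T$. Since the relative Weyl group $N_G(T)/T$ has order $2$ and acts on $T$ by inversion, the intersection $x^G \cap T$ equals $\{x, x^{-1}\}$, so $|x^G \cap P| = 2$. Combining with (a) gives $|x^G| = 2 \cdot \nu_2(G) = |x^G \cap P| \cdot \nu_2(G)$, and Lemma~\ref{necessary} then guarantees that $x$ lies in a unique Sylow $2$-subgroup of $G$.

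The main obstacle is really the centralizer computation in~(a): one needs that a regular semisimple element of $\SL_2(q)$ has centralizer equal to the maximal torus that contains it. The rest of the argument is essentially bookkeeping about the structure of generalized quaternion groups and the Weyl-group action on a torus of $\SL_2$. I expect the author's proof to follow a similar counting scheme, though perhaps executed via character-table data rather than through the explicit maximal-torus picture used here.
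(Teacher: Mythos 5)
Your argument is correct, and it follows the same overall scheme as the paper: both proofs take the element $x$ of order $2^{k}$ in the quaternion Sylow $2$-subgroup, use $N_G(P)=P$ from Lemma \ref{norm}, establish $C_G(x)=\langle x\rangle$ and $|x^G\cap P|=2$, and then invoke the equality case of Lemma \ref{necessary}. Where you differ is in how the two computational facts are verified. The paper gets $|x^G\cap P|=2$ by citing Dornhoff's character-table description of $\SL(2,q)$: there are exactly $2^{k-2}$ classes of elements of order $2^k$, each meeting $P$ in sets of equal size, and $P$ has $2^{k-1}$ such elements; the centralizer statement $C_G(x)=\langle x\rangle$ is merely asserted there. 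You instead observe that $\langle x\rangle$ is a full maximal torus $T$ (split if $q=2^k+1$, non-split if $q=2^k-1$), that $x$ is regular semisimple, and that elements of $P\setminus\langle x\rangle$ have order $4$, so $x^G\cap P\subseteq T$ and fusion is governed by $N_G(T)$. This is more self-contained (it actually supplies the detail the paper leaves to the reader) at the cost of quoting structure theory of $\SL_2$. One step you should make explicit: why $G$-conjugacy inside $T$ is realized in $N_G(T)$, and why $N_G(T)/T$ has order $2$ acting by inversion. Both are quick for your particular $x$: if $x^g\in T$ then $x^g$ generates $T$, so $T\le C_G(x^g)=C_G(x)^g=T^g$ and $g\in N_G(T)$; moreover $C_G(T)=C_G(x)=T$ and $\mathrm{Aut}$ of a cyclic $2$-group is a $2$-group, so $N_G(T)$ is a $2$-group containing $P$, hence $N_G(T)=P$, and the inversion action is just the quaternion relation, giving $x^G\cap T=x^P=\{x,x^{-1}\}$. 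With that sentence added, your proof is complete and, if anything, more elementary in its dependence on the literature than the published one.
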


Thus, it only remains to prove that if $q$ does not have the form $2^k\pm 1$, then $\SL(2,q)$ possesses redundant Sylow $2$-subgroups.

\begin{thm}\label{centra}
Let $q$ be a power of a prime such that $\{q-1,q,q+1\} \cap \{2^k|k \in \mathbb{N}\}=\varnothing$ and let $G=\SL(2,q)$. If  $x$ is a $2$-element of  $G$, then either $q-1$ or $q+1$ divides $|C_{G}(x)|$. 
In particular, 
$C_G(x)$
is not a $2$-group.
\end{thm}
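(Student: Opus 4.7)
The plan is to exploit the classification of conjugacy classes and centralizers in $\SL(2,q)$ for $q$ odd. First I would observe that the hypothesis forces $q$ to be odd, since the only prime powers of the form $2^k$ are the powers of $2$ themselves. Consequently, in odd characteristic $p$, the only unipotent $2$-element of $\SL(2,q)$ is the identity (unipotent elements have order a power of $p$). Thus any $2$-element $x$ of $G=\SL(2,q)$ is either central ($x=\pm I$) or semisimple.

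Next I would dispose of the central case at once: if $x=\pm I$, then $C_G(x)=G$ has order $q(q-1)(q+1)$, so both $q-1$ and $q+1$ divide $|C_G(x)|$. If $x$ is non-central semisimple, I would invoke Dickson's description of $\SL(2,q)$ (the same reference \cite[Chapter 38]{Dornhoff} already used in the preceding theorem) to conclude that $x$ lies in a unique maximal torus $T$ and that $C_G(x)=T$. The torus $T$ is cyclic of order $q-1$ when the eigenvalues of $x$ lie in $\F_q^\times$, and of order $q+1$ when they lie in $\F_{q^2}^\times\setminus\F_q^\times$. In either sub-case $(q-1)\mid|C_G(x)|$ or $(q+1)\mid|C_G(x)|$.

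To finish I would use the hypothesis: since neither $q-1$ nor $q+1$ equals a power of $2$ and both are at least $2$, each of them has an odd prime factor. Hence $|C_G(x)|$ has an odd prime factor, and $C_G(x)$ is not a $2$-group. The main obstacle, if any, is to verify that the centralizer of a non-central semisimple $2$-element is exactly the containing maximal torus and not larger; this is a routine consequence of the same explicit description of $\SL(2,q)$-elements used to compute $|C_G(x)|=|\langle x\rangle|$ in the proof of the previous theorem, so it should present no real difficulty.
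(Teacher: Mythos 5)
Your proposal is correct, but it reaches the key fact by citation where the paper argues from scratch. You both split into the same two cases (eigenvalues of the $2$-element in $\F_q^\times$ versus $\F_{q^2}^\times\setminus\F_q^\times$, i.e.\ split versus non-split torus), and you both finish identically by noting that neither $q-1$ nor $q+1$ is a power of $2$. The difference is in how the centralizer is shown to contain a torus of order $q\pm1$: you invoke Dickson's/Dornhoff's classification of conjugacy classes and centralizers in $\SL(2,q)$ to say that a non-central semisimple element has centralizer exactly a maximal torus of order $q-1$ or $q+1$ (after disposing of $x=\pm I$ separately), which is legitimate and even slightly stronger than needed -- only the containment $T\subseteq C_G(x)$ is used. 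The paper instead gives a self-contained linear-algebra argument: Maschke's theorem splits the action of $\langle x\rangle$ on the natural module into the diagonalizable and irreducible cases; in the diagonalizable case the split torus $\{\diag(a,a^{-1})\}$ visibly centralizes $x$, and in the irreducible case Schur's lemma plus Wedderburn's theorem identify the centralizing algebra with $\F_{q^2}$, whose Singer cycle intersected with $\SL(2,q)$ gives a centralizing cyclic group of order divisible by $q+1$. Your route is shorter and leans on standard tables (the same source the paper already cites for the preceding theorem), while the paper's route avoids the classification entirely and only needs elementary representation theory; either is acceptable, and your treatment of the central case and the parity bookkeeping at the end are fine.
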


\begin{proof}
Let $x$ be a $2$-element of $G$,  let $C=\langle x \rangle$, let $\F_q$ be the field of $q$ elements and let $V$ be the vector space of dimension $2$ over $\F_q$. We have that $C$ acts naturally on $V$ and since  the characteristic of the field defining $V$ is odd, $V$ is a completely reducible $C$-module by Maschke's Theorem (see \cite[Theorem 1.9]{Isaacscar}). Thus, either $C$ may be diagonalized over $V$ or  $C$ acts irreducibly on $V$.

Assume first that  $C$ can be diagonalized over $V$.  Therefore, we can identify $x$ with a matrix $\diag(e,e^{-1})$ for some $e\in \F_{q}^{\times}$.  We observe that $\diag(a,a^{-1})$ centralizes $x$ for every $a\in \F_q$. Now, since $q-1$ is not a power of $2$, we have that there exists $a\in \F_q$ of odd order and hence $\diag(a,a^{-1})$ is an element of odd order centralizing $x$.

Assume now that $C$ acts irreducibly on $V$. We claim  that $C_{G}(x)$ contains a cyclic group of order $q+1$. Let $K$ be the centralizer of $C$ in $\mathrm{End}(V)$. It follows from Schur's 
lemma (see \cite[Lemma 1.5]{Isaacscar}) that $K$ is a division ring. Wedderburn's 
theorem says that a finite division ring is a field. The multiplicative 
group $K^{\times}$ of $K$ is cyclic of order $k$. We know that $\F_q$ is contained in $K$. Since $C$ acts 
irreducibly on $V$ (and is contained in $K$), $K$ must be a proper field 
extension of $\F_q$. So $k \geq q^2$. Since $K^{\times}$ is cyclic and it acts faithfully 
on $V$, its order $k-1$ must be at most $|V|-1 = q^{2}-1$. It follows that $K^{\times}$ 
is a cyclic group of order $|V|-1$ moving the non-zero elements of V in 
one cycle. Such a group is called a Singer cycle. This is the 
centralizer of $C$ in $\GL(V)$. Take $T = K^{\times} \cap \SL(V)$. This group centralizes 
$C$. The order of $T$ is divisible by $(q^{2}-1)/(q-1) = q+1$ and the claim follows. Since  $q+1$ is not a power of $2$, we deduce that there exists an element of odd order centralizing $x$.
\end{proof}

The following result completes the proof of Theorem D. 

\begin{thm}
Let $q$ be a power of a prime such that $\{q-1,q,q+1\} \cap \{2^k|k \in \mathbb{N}\}=\varnothing$. Then  $\SL(2,q)$ possesses redundant Sylow $2$-subgroups.
\end{thm}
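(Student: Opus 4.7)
The plan is to combine Lemma \ref{equi} with Lemma \ref{nav}: it suffices to show that every $2$-element of $G=\SL(2,q)$ lies in more than one Sylow $2$-subgroup. The nontrivial central $2$-element $-I$ lies in every Sylow $2$-subgroup of $G$, and there are clearly more than one of them under our hypothesis (immediate from Lemma \ref{norm} together with an order estimate), so this case is handled at once; the real work concerns the non-central case.

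Let $x$ be a non-central $2$-element. Since $q$ is odd, $x$ is semisimple, and the argument in the proof of Theorem \ref{centra} identifies $C_{G}(x)$ with the unique maximal torus $T$ of $G$ containing $x$, cyclic of order $q-1$ (if $x$ is diagonalisable over $\mathbb{F}_{q}$) or $q+1$ (otherwise). Because $\langle x\rangle\leq T$ has order at least $4$, the order $|T|$ is divisible by $4$, and by hypothesis it is not a power of $2$; write $|T|=2^{c}m$ with $c\geq 2$ and $m\geq 3$ odd, and let $T_{2}$ denote the (cyclic) Sylow $2$-subgroup of $T$.

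Next I would pick any Sylow $2$-subgroup $P$ of $G$ containing $T_{2}$ (and hence $x$) and show by a direct computation that $T\not\subseteq N_{G}(P)$; the contrapositive of Lemma \ref{nav} will then give that $x$ lies in more than one Sylow $2$-subgroup, as required. Writing $N_{G}(T)=T\langle w\rangle$ with $w$ the Weyl-group element that inverts $T$ and satisfies $w^{2}=-I$, the index-$2$ inclusion $T_{2}\leq P$ forces $P=T_{2}\langle w\rangle$ (a generalised quaternion group). The relation $wsw^{-1}=s^{-1}$ yields $sws^{-1}=ws^{-2}$, so for $s\in T$ one has $sPs^{-1}=P$ if and only if $s^{2}\in T_{2}$; since $T/T_{2}$ is cyclic of odd order $m\geq 3$, this forces $s\in T_{2}$. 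Hence $T\cap N_{G}(P)=T_{2}\subsetneq T$.

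The main obstacle will be the structural step: identifying $C_{G}(x)$ with a maximal torus and describing $N_{G}(T)$ and $P$ compatibly. Once those pieces are in place, the equality $T\cap N_{G}(P)=T_{2}$ reduces to the elementary observation that $m>1$, which is exactly what the hypothesis $\{q-1,q,q+1\}\cap\{2^{k}\mid k\in\mathbb{N}\}=\varnothing$ guarantees.
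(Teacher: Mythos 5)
Your argument is correct in substance, but it follows a genuinely different route from the paper. The paper applies the class-size criterion of Lemma \ref{necessary}: using the Sylow normalizer orders from Lemma \ref{norm} (which needs $q>13$ and a case division according to $q\equiv\pm3\pmod 8$ or not), it reduces the claim to showing $|C_G(x)|>|C_P(x)|$, respectively $|C_G(x)|>3|C_P(x)|$, which is supplied by Theorem \ref{centra}; the small cases $q\le 27$ are then checked with GAP. You instead apply Navarro's criterion (Lemma \ref{nav}) directly: for a non-central $2$-element $x$ you place $x$ in a cyclic maximal torus $T$ of order $q\pm1$ with $4\mid |T|$, take a Sylow $2$-subgroup $P\supseteq T_2$ of index $2$ over $T_2$, and show $T\cap N_G(P)=T_2\subsetneq T$, so $C_G(x)\not\subseteq N_G(P)$ and $x$ lies in more than one Sylow $2$-subgroup; this is uniform in $q$, avoids the mod $8$ case split and the computer check, at the cost of more explicit structural input about tori and the quaternion Sylow subgroups. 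A few points need tightening, all routine: (1) the specific Weyl element $w$ with $w^2=-I$ need not lie in $P$; what you should say is that $T_2\trianglelefteq P$ gives $P\le N_G(T_2)\le N_G(C_G(T_2))=N_G(T)$ (here $C_G(T_2)=T$ because $T_2$ contains an element of order at least $4$, hence a regular semisimple element), so $P=T_2\langle w'\rangle$ for some $2$-element $w'\in N_G(T)\setminus T$, and any such $w'$ inverts $T$; your commutation computation then goes through verbatim with $w'$ in place of $w$. (2) Theorem \ref{centra} as stated only yields that $C_G(x)$ contains a large cyclic subgroup, not that it equals $T$; but you only need $T\le C_G(x)$, which is immediate since $T$ is abelian and contains $x$. (3) You should note that non-central $2$-elements have order at least $4$ (the unique involution of $\SL(2,q)$, $q$ odd, is $-I$), which is what guarantees $4\mid|T|$ and hence $|P:T_2|=2$. (4) For the central elements you invoke Lemma \ref{norm}, which is stated only for $q>13$, while $q=11,13$ satisfy the hypothesis; this is harmless, since $\nu_2(G)>1$ follows already from your non-central case (or from quasisimplicity of $\SL(2,q)$), so no appeal to Lemma \ref{norm} is needed at all.
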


\begin{proof}
Let $G=\SL(2,q)$ and let $P \in \Syl_2(G)$. Since the cases  $q\leq 27$ can be checked by GAP \cite{gap}, we may assume that $q>27$.

Suppose first that $q\not\equiv\pm3\pmod{8}$. By Lemma \ref{norm}, we know that $P=N_G(P)$. By Lemma \ref{necessary}, it suffices to prove that  $|x^G|< \nu_p(G)|x^P|$ for all $x \in P$. This is equivalent to prove that $|C_G(x)|>|C_P(x)|$ for all $x\in P$. Thus, it suffices to prove that $C_G(x)$ is not a $2$-group for all $x \in P$. Thus, the result follows by Theorem \ref{centra}.
 
 Suppose now that $q\equiv\pm3\pmod{8}$. Then by Lemma \ref{norm} we know that $|N_G(P)|=3|P|$. Thus proving $|x^G|< \nu_p(G)|x^P|$ for all $x \in P$ is equivalent to proving that $3|C_P(x)|<|C_G(x)|$. Since  $q\equiv\pm3\pmod{8}$, we deduce that $|C_P(x)|\leq|P|\leq 8$. Thus, since $q>27$, we have that  $$3|C_P(x)|\leq 24<q-1\leq|C_G(x)|,$$
 where the last inequality follows fom Theorem \ref{centra}.  Thus we have proved that $|x^G|< \nu_p(G)|x^P|$ for all $x \in P$ and the result follows from Lemma \ref{necessary}.
\end{proof}

\section{Sporadic groups}

In this section, we collect the results on the sporadic groups which we have obtained using GAP \cite{gap}. For the smallest sporadic groups we used Lemma \ref{necessary} to study whether there is a redundant Sylow $p$-subgroup.

\begin{verbatim}
-Fully Checked Groups: ["M11","M12", "M22", "M23", "M24", "J1", "J2", 
"J3", "HS", "McL", "He", "Co3", "Ru", "Suz"];

-Groups and primes for which there are redundant Sylow p-subgroups:
[ [ "M12", 3 ], [ "M22", 3 ], [ "M23", 3 ], [ "M24", 3 ], [ "J1", 2 ],
[ "J2", 3 ], [ "HS", 3 ], [ "He", 3 ], [ "He", 5 ], [ "Co3", 2 ],
[ "Ru", 3 ], [ "Ru", 5 ], [ "Suz", 5 ] ].
\end{verbatim}

It does not seem possible with our hardware to decide the existence of a redundant Sylow $p$-subgroup for the larger sporadic groups using Lemma \ref{necessary}.
For these groups, we have used the sufficient condition for the existence of a redundant Sylow $p$-subgroup provided by Lemma \ref{nav}.
Using the character table and the second orthogonality relation, we can compute $|C_G(x)|$ for any sporadic group and any $p$-element $x\in G$. If for any $p$-element $x\in G$, $|C_G(x)|$ does not divide $|N_G(P)|$, where $P$ is a Sylow $p$-subgroup of $G$, then it follows from Lemma \ref{nav} that $G$ has a redundant Sylow $p$-subgroup. The structure of the Sylow normalizers of the sporadic groups can be found in \cite{wil}. Using this criterion, we have checked the following.

\begin{verbatim}
-Groups studied with this criterion: [ "J4", "Co1", "Co2",  "Fi22", 
"Fi23", "Fi24","ON", "HN", "Ly", "Th" "B", "M" ];

-Groups and primes for which there are redundant Sylow p-subgroups:
[ ["J4",3], ["Co1", 5], ["Fi23", 5], ["Fi24'", 5], ["Fi24'", 7],
 ["Ly'", 2], ["Th'", 2], ["B", 7], ["M", 5], ["M", 7], ["M", 11] ].
\end{verbatim}

Since Lemma \ref{nav} is not a sufficient  condition for the existence of redundant Sylow subgroups, there may exist some other pairs $(G, p)$ with $G$ sporadic and $p$ prime such that $G$ has a  redundant Sylow $p$-subgroup.

It is worth remarking that the Monster has a redundant Sylow $7$-subgroup. This is related to the work in \cite{hv1, hv2}.

  \section{On the number of Sylow $p$-subgroups}
  
We have conjectured in \cite{mmm} that if $G$ is a finite group generated by its $p$-elements, then $G_p$ cannot be covered by $p$ proper subgroups. Our next goal is to prove that if $G$ does not have a normal Sylow $p$-subgroup, then $G_p$ cannot be covered by $p$ Sylow $p$-subgroups. The following lemma is proved in \cite[p. 79-80]{mbd}. For the convenience of the reader, we present a proof in modern language and notation.

  \begin{lem}
  \label{pel}
  Let $G$ be a group with non-normal Sylow $p$-subgroups of order $p^n$. Then $|G_p|\geq p^{n+1}$.
  \end{lem}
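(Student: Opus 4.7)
The plan is to locate a ``maximally intersecting'' pair of Sylow $p$-subgroups of $G$ and then count by inclusion--exclusion. Since by hypothesis $\nu_p(G)>1$, I would choose distinct $P,Q\in\Syl_p(G)$ so that $D:=P\cap Q$ has order $p^d$ as large as possible; note $d\leq n-1$ since $D<P$. The proof then reduces to two claims: (i) $D$ is contained in at least $p+1$ Sylow $p$-subgroups of $G$, and (ii) any two of these Sylow $p$-subgroups intersect in exactly $D$.

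The key step is (i), and I expect it to be the main obstacle. The idea is to work inside $N_G(D)$. By the normalizer condition for $p$-groups, $N_P(D)>D$ and $N_Q(D)>D$. I claim that $N_P(D)$ is already a Sylow $p$-subgroup of $N_G(D)$: if a $p$-subgroup $R\leq N_G(D)$ strictly contained $N_P(D)$, then any Sylow $p$-subgroup $P'$ of $G$ containing $R$ would satisfy $P'\cap P\supseteq N_P(D)>D$, forcing $P'=P$ by the maximality of $|D|$; but then $R\subseteq P\cap N_G(D)=N_P(D)$, contradicting $R>N_P(D)$. The same reasoning makes $N_Q(D)$ a Sylow $p$-subgroup of $N_G(D)$, and $N_P(D)\neq N_Q(D)$, since otherwise both would lie in $P\cap Q=D$, contradicting $N_P(D)>D$. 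Thus $\nu_p(N_G(D))>1$, and Sylow's congruence gives at least $p+1$ Sylow $p$-subgroups $R_1,\dots,R_{p+1}$ of $N_G(D)$. For each $R_i$, extending it to a Sylow $p$-subgroup $P_i$ of $G$ and repeating the argument yields $R_i=N_{P_i}(D)$; and if $P_i=P_j$ then $R_i=R_j$, so the $P_i$ are pairwise distinct Sylow $p$-subgroups of $G$ all containing $D$. Claim (ii) is immediate from maximality: $P_i\cap P_j\supseteq D$, and any strict containment would contradict the choice of $D$.

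Inclusion--exclusion now finishes the job. Since all pairwise (and hence all multiple) intersections of $P_1,\dots,P_{p+1}$ equal $D$,
\[
|G_p|\;\geq\;\bigl|P_1\cup\cdots\cup P_{p+1}\bigr|\;=\;(p+1)p^n-p\cdot p^d\;=\;(p+1)p^n-p^{d+1}.
\]
Using $d\leq n-1$, the right-hand side is at least $(p+1)p^n-p^n=p^{n+1}$, as desired.
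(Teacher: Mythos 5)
Your proof is correct, and it follows the same overall skeleton as the paper's: pick two distinct Sylow $p$-subgroups $P,Q$ whose intersection $D$ has maximal order, produce $p+1$ Sylow $p$-subgroups of $G$ that pairwise intersect exactly in $D$, and count the union exactly as you do, obtaining $(p+1)p^n-p^{d+1}\geq p^{n+1}$. The difference lies in how the $p+1$ subgroups are produced. The paper argues directly with an element: it takes $g\in N_P(D)\setminus D$, observes that $g\notin N_G(Q)$ (a $p$-element of $N_G(Q)$ would lie in $Q$), so $Q,Q^g,\dots,Q^{g^{p-1}}$ are $p$ distinct Sylow subgroups, each containing $D=D^{g^i}$ and meeting each other and $P$ exactly in $D$ by maximality; together with $P$ these are the required $p+1$ subgroups. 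You instead pass to $N_G(D)$, use the maximality of $|D|$ to show that $N_P(D)$ and $N_Q(D)$ are two distinct Sylow $p$-subgroups of $N_G(D)$, and invoke Sylow's congruence to get at least $p+1$ Sylow subgroups of $N_G(D)$, which you then lift to distinct Sylow subgroups of $G$. Both are valid; the paper's route is more hands-on and avoids the congruence theorem and the lifting bookkeeping, while yours yields the slightly stronger fact that at least $\nu_p(N_G(D))\equiv 1 \pmod p$ Sylow $p$-subgroups of $G$ contain $D$ and pairwise meet in $D$. One small detail to add to your write-up: to know $D\subseteq P_i$ (needed for your claims (i) and (ii)) you should note that $D$ is a normal $p$-subgroup of $N_G(D)$ and hence lies in every Sylow $p$-subgroup $R_i$ of $N_G(D)$; similarly, $R_i=N_{P_i}(D)$ follows simply because $N_{P_i}(D)=P_i\cap N_G(D)$ is a $p$-subgroup of $N_G(D)$ containing the Sylow subgroup $R_i$, with no need to repeat the maximality argument.
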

  
  \begin{proof}
  Let $P,Q\in\Syl_p(G)$ such that $P\neq Q$ and  $P\cap Q$ has order as large as possible. Write $D=P\cap Q$.  Since $D$ is a $p$-group, there exists $g\in N_P(D)-D$. In particular, $g\not\in Q$. Since $g$ is a $p$-element, $g\not\in N_G(Q)$, and we deduce that $Q,Q^g,\dots,Q^{g^{p-1}}$ are $p$ pairwise different Sylow $p$-subgroups of $G$.  Note that 
  $$
  D=P\cap Q=(P\cap Q)^g=P\cap Q^g,
  $$ 
  so $D=P\cap Q^{g^i}$ for any $i=0,\dots,p-1$. 
  On the other hand, since $g$ normalizes $D$, $D$ is contained in $Q^{g^i}\cap Q^{g^j}$ for any $0\leq i< j\leq p-1$. By the maximality of $D$, we conclude that $D=Q^{g^i}\cap Q^{g^j}$ for any $0\leq i< j\leq p-1$. Thus, we have found $p+1$ Sylow $p$-subgroups of $G$ such that the intersection of any two of them is $D$. Put $|D|= p^{m}$. It follows that
  $$
  |G_p|\geq|P\cup\bigcup_{i=0}^{p-1}Q^{g^i}|\geq(p+1)(p^n-p^m)+p^m=p^{n+1}+p^n-p^{m+1} \geq p^{n+1}.
  $$
  \end{proof}
  
  \begin{thm}
  Let $G$ be a group with non-normal Sylow $p$-subgroups. Then $G_p$ cannot be covered by $p$ Sylow $p$-subgroups. In particular, if $\nu_p(G)=p+1$, then $G$ does not have a redundant Sylow $p$-subgroup.
  \end{thm}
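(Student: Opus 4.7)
\medskip

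The plan is to combine Lemma \ref{pel} with a simple inclusion–exclusion bound on the union of Sylow $p$-subgroups. First I would set $|P|=p^{n}$ for $P\in\Syl_p(G)$ and, using that the Sylow $p$-subgroups of $G$ are not normal, invoke Lemma \ref{pel} to get the lower bound $|G_p|\geq p^{n+1}$.

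Next, suppose for contradiction that $G_p = P_1 \cup \cdots \cup P_p$ for some Sylow $p$-subgroups $P_1,\ldots,P_p$ of $G$. Since each $|P_i|=p^n$ and all of them contain the identity (which is therefore counted $p$ times in the sum $\sum_{i=1}^p|P_i|$ but only once in the union), a trivial inclusion–exclusion estimate gives
\[
|G_p| \;\leq\; \Bigl|\bigcup_{i=1}^{p} P_i\Bigr| \;\leq\; \sum_{i=1}^{p} |P_i| - (p-1) \;=\; p^{n+1} - p + 1 \;<\; p^{n+1},
\]
contradicting the lower bound from Lemma \ref{pel}. Hence no such covering exists.

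For the ``in particular'' part, assume $\nu_p(G)=p+1$; in particular the Sylow $p$-subgroups of $G$ are non-normal, so the main statement applies. If $G$ had a redundant Sylow $p$-subgroup, then $G_p$ would be covered by the remaining $\nu_p(G)-1 = p$ Sylow $p$-subgroups, which we have just ruled out.

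The argument is essentially a one-line calculation once Lemma \ref{pel} is in hand; the only thing to keep in mind is that the identity element lies in every Sylow $p$-subgroup, which is exactly what forces the strict inequality $p\cdot p^n-(p-1)<p^{n+1}$ and closes the gap. There is no real obstacle, as the heavy lifting is done by Lemma \ref{pel}.
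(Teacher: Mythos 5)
Your proof is correct and follows essentially the same route as the paper: invoke Lemma \ref{pel} for the lower bound $|G_p|\geq p^{n+1}$, and note that $p$ Sylow $p$-subgroups, all containing the identity, cover strictly fewer than $p^{n+1}$ elements. The explicit inclusion--exclusion count and the observation that $\nu_p(G)=p+1>1$ forces non-normality are exactly the (implicit) details of the paper's argument, so there is nothing to add.
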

  
  \begin{proof}
  Write $|G|_p=p^n$. We have $|G_p|\geq p^{n+1}$ by Lemma \ref{pel}. On the other hand, the union of $p$ Sylow $p$-subgroups has less than $p^{n+1}$ elements since the identity belongs to every Sylow subgroup. The result follows.
  \end{proof}
  
  We can improve the second part of this result. We need the following, which was essentially proved by Richard Lyons in \cite{lyo}. 
    
 \begin{lem}
 \label{p2}
 If $G$ is a group with at most $p^2$ Sylow $p$-subgroups, then $|G:O_p(G)|_{p} = p$.
 \end{lem}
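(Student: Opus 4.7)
My plan is to reduce to the case $O_p(G)=1$ and then analyze the permutation action of a Sylow $p$-subgroup on $\Syl_p(G)$, before appealing to the sharper statement from \cite{lyo}. Since $O_p(G)$ lies in every Sylow $p$-subgroup of $G$, the assignment $Q \mapsto Q/O_p(G)$ is a bijection $\Syl_p(G) \to \Syl_p(G/O_p(G))$, so $\nu_p(G/O_p(G)) = \nu_p(G) \leq p^2$. Moreover $O_p(G/O_p(G))=1$, because the preimage in $G$ of a normal $p$-subgroup of $G/O_p(G)$ is an extension of a $p$-group by $O_p(G)$, hence itself a normal $p$-subgroup of $G$, and thus contained in $O_p(G)$. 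The conclusion $|G:O_p(G)|_p = p$ then becomes the assertion that a Sylow $p$-subgroup $P$ of the reduced group has order $p$ (the degenerate case $\nu_p(G)=1$ being tacitly excluded from the statement).

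Next I would study the action of $P$ on $\Syl_p(G)$ by conjugation. The stabilizer in $P$ of a Sylow subgroup $Q$ equals $P \cap N_G(Q) = P \cap Q$, because every $p$-subgroup of $N_G(Q)$ is contained in $Q$. Each non-trivial orbit therefore has size $[P:P\cap Q]$, a non-trivial $p$-power, and the total non-trivial orbit size $\nu_p(G)-1 \leq p^2-1$ forbids any orbit of size $\geq p^2$. Hence every non-trivial orbit has size exactly $p$, and $P\cap Q$ is a maximal (index $p$, hence normal) subgroup of $P$ for each $Q \neq P$. An element of $P$ lying in every such intersection would lie in every Sylow $p$-subgroup of $G$, hence in $O_p(G)=1$, so the distinct subgroups $M_1,\dots,M_k$ arising from the $P$-orbits intersect trivially. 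The diagonal map $P \hookrightarrow \prod_{i=1}^k P/M_i$ therefore shows that $P$ is elementary abelian of rank at most $k$, while $pk+1 = \nu_p(G) \leq p^2$ forces $k \leq p-1$.

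The hard part is sharpening the resulting bound $|P|\leq p^{p-1}$ down to $|P|=p$; this is the genuinely Lyons input from \cite{lyo}. (Note that for $p=2$ the conclusion is immediate from the above, since $p^{p-1}=p$, so the real difficulty only begins at $p\geq 3$.) The strategy is to exploit that $P$ is now elementary abelian — so Brodkey-type intersection arguments become available — together with a careful double count of incidences between the non-identity $p$-elements of $P$ and the Sylow $p$-subgroups containing them: each $M_i$ is contained in exactly $p$ Sylows besides $P$, and comparing this incidence total against the structural constraints on the $M_i$ inside $P$ (they are hyperplanes of an $\F_p$-space intersecting trivially and arranged in an orbit pattern compatible with the $G$-action) collapses the rank of $P$ to $1$. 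Substituting back then yields $|G:O_p(G)|_p = p$ as claimed.
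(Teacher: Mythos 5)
Your argument is correct, and essentially reproduces the paper's proof, up to the point where you obtain that every intersection $P\cap Q$ with $Q\neq P$ has index $p$ in $P$ and that $P$ (modulo $O_p(G)$) is elementary abelian of rank at most $k\le p-1$; this matches the paper's first two paragraphs (which phrase the same facts as $|N_P(Q)|=p^{n-1}$ and $\Phi(P)\le O_p(G)$). The genuine gap is the final step: you do not prove that the rank is $1$ for $p\ge 3$. You defer to \cite{lyo} and to an unexecuted ``double count of incidences,'' and the one concrete claim you make in that sketch --- that each $M_i$ is contained in exactly $p$ Sylow $p$-subgroups besides $P$ --- is unjustified (a single hyperplane $M_i$ may occur as the intersection with Sylow subgroups from several distinct $P$-orbits, which your own count $pk+1\le\nu_p(G)$, with $k$ the number of \emph{distinct} intersections, already permits). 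So as written the proposal only establishes $|G:O_p(G)|_p\le p^{p-1}$, which settles $p=2$ but not the lemma in general.

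The missing step is in fact short, and you name the right tool without using it: Brodkey's theorem. Since the Sylow $p$-subgroups of $G/O_p(G)$ are (elementary) abelian, Brodkey's theorem (\cite[Theorem 1.37]{isa}) produces $P,Q\in\Syl_p(G)$ with $\overline{P}\cap\overline{Q}=\overline{1}$, i.e.\ $P\cap Q=O_p(G)$; in your normalization $O_p(G)=1$, this gives two Sylow subgroups meeting trivially. Combined with your first paragraph, which shows that any two distinct Sylow $p$-subgroups meet in a subgroup of index $p$ of each, this forces $|P:O_p(G)|=p$ (the case $\nu_p(G)=1$ being excluded as you note). This is exactly how the paper concludes; no incidence count, and no use of the rank bound $k\le p-1$, is needed.
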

 
 \begin{proof}
 Write $|G|_p=p^n$. Let $P$, $Q \in \Syl_p(G)$ be two different Sylow $p$-subgroups. We claim that $|P\cap Q|=p^{n-1}$. Consider the conjugation action of $P$ on the set $\Syl_p(G) \setminus \{ P \}$ of cardinality less than $p^2$. Each orbit must have size $p$. In particular, $|N_P(Q)|=p^{n-1}$. Since $N_P(Q)Q$ is a $p$-group, it follows that $N_P(Q)\leq Q\cap P$, whence $|P\cap Q|=p^{n-1}$.
 
 Let $P \in \Syl_{p}(G)$. The Frattini subgroup $\Phi(P)$ of $P$ is contained in every Sylow $p$-subgroup of $G$ by the previous paragraph. Since the intersection of all Sylow $p$-subgroups of $G$ is $O_{p}(G)$, we find that $\Phi(P) \leq O_{p}(G)$. 
 
 The group $\overline{G}=G/O_p(G)$ has elementary abelian Sylow $p$-subgroups. By Brodkey's theorem \cite[Theorem 1.37]{isa}, there exist $P$, $Q\in\Syl_p(G)$ such that $\overline{P}\cap\overline{Q}=\overline{1}$, so $P\cap Q=O_p(G)$. It follows from the first paragraph that $|P:O_{p}(G)| = p$. 
 \end{proof} 
 
 \begin{thm}
 Let $G$ be a finite group with at most $p^2$ Sylow $p$-subgroups. Then $G$ does not have a redundant Sylow $p$-subgroup.
 \end{thm}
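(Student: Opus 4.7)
The plan is to combine Lemma \ref{p2} with Corollary \ref{op} in an essentially one-line argument, after dispatching the trivial case.

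First I would deal with the case $\nu_p(G)=1$: here $G$ has a unique (hence normal) Sylow $p$-subgroup, so $G$ trivially has no redundant Sylow $p$-subgroup (the concept requires $\nu_p(G) > 1$ for redundancy to be meaningful, and Corollary \ref{op} applied with $O_p(G)=P$ also gives this immediately).

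Next, assuming $\nu_p(G) > 1$ and $\nu_p(G) \leq p^2$, I would apply Lemma \ref{p2} to obtain $|G:O_p(G)|_p = p$. Let $P \in \Syl_p(G)$. Since $O_p(G) \leq P$ and $|P:O_p(G)| = |G:O_p(G)|_p = p$, the quotient $P/O_p(G)$ has order $p$ and is in particular cyclic.

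Finally, I would invoke Corollary \ref{op}: since $P/O_p(G)$ is cyclic, $G$ does not have a redundant Sylow $p$-subgroup. This completes the proof. There is no real obstacle here; the whole content of the theorem is packaged inside Lemma \ref{p2} (which produces the cyclic quotient structure) together with Corollary \ref{op} (which converts the cyclic quotient into the non-redundancy conclusion), so the only task in writing the proof is to string the two results together cleanly.
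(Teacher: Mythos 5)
Your proof is correct and follows essentially the same route as the paper: the paper also deduces the theorem directly from Lemma \ref{p2} together with Corollary \ref{op}. Your separate treatment of the case $\nu_p(G)=1$ is a harmless (and arguably careful) addition, since Corollary \ref{op} covers it anyway because $P/O_p(G)$ is then trivial, hence cyclic.
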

 
 \begin{proof}
 This follows from Lemma \ref{p2} and Corollary \ref{op}.
 \end{proof}
 
 We think that it should be possible to weaken the hypothesis that $G$ has at most $p^2$ Sylow $p$-subgroups in the previous theorem. 
 This result suggests the following question.
  
  \begin{que}
  What is the smallest value of $n=n(p)$ such that there exists a group $G$ with a redundant Sylow $p$-subgroup and $\nu_p(G)=n$? 
  \end{que}
  
  It also seems interesting to study the minimal size of a covering set by Sylow $p$-subgroups of a group with a redundant Sylow $p$-subgroup. One of the  smallest such examples, {\tt SmallGroup}(108,17), has $27$ Sylow $2$-subgroups and we have checked with GAP \cite{gap} that the smallest size of a covering of $G_p$ by Sylow $p$-subgroups is at most $12$. It would be interesting to see if when $G$ is a group with a redundant Sylow $p$-subgroup it is necessarily the case that there are ``many" redundant Sylow $p$-subgroups. For instance, we have the following question.
  
  \begin{que}
  Is it true that there exists $c<1$ (possibly depending on $p$) such that if $p$ is a prime and $G$ is a finite group with a redundant Sylow $p$-subgroup, then $G_p$ can be covered by at most $c\nu_p(G)$ Sylow $p$-subgroups?
  \end{que}
  
  In other words, this question is asking whether in case that there is a redundant Sylow $p$-subgroup, there necessarily exists a positive proportion of redundant Sylow $p$-subgroups.
  The following dual question seems also of interest.
  
  \begin{que}
  Is it true that there exists $c>0$ (possibly depending on $p$) such that if $p$ is a prime and $G$ is a finite group, then $G_p$ cannot be covered by less than $c\nu_p(G)$ Sylow $p$-subgroups?
  \end{que}
  
  Note that Conjecture A of \cite{ghe} implies that $G_p$ cannot be covered by less than $\nu_p(G)^{1-\frac{1}{p}}$ Sylow $p$-subgroups.  These questions are partially inspired by a theorem of Sambale and T\u{a}rn\u{a}uceanu \cite{st}. 
  They  proved in \cite{st} that there exists $c=c(n)>0$ such that  if a finite group $G$ is not covered by $H_1,\dots, H_n$ then the proportion of elements of $G$ in $G\setminus(H_1\cup\dots\cup H_n)$ is at least $c$. (Actually, they proved stronger and more precise results.) We conclude with a  possible $p$-version of this.

\begin{que}
Does there exist $c=c(n)>0$ (possibly depending on $p$ too) such that if $G$ is a finite group and $G_p$ is not covered by $P_1,\dots, P_n\in\Syl_p(G)$ then $$|G_p\setminus(P_1\cup\dots\cup P_n)|/|G_p|\geq c?$$
\end{que}

{\bf Acknowledgement:} We thank Gunter Malle for pointing out gaps in the proofs of Theorem D and  Theorem 5.1 of the published version.

\end{document}